\documentclass[12pt]{amsart}
\usepackage{enumerate,amssymb,version,aliascnt,geometry,stmaryrd,mathrsfs,chngcntr}
\usepackage[all]{xy}
\usepackage{hyperref}

\hypersetup{
pdfauthor={Olivier Haution},
pdftitle={Involutions and Chern numbers of varieties},
pdfkeywords={involutions, fixed points, Chern numbers, algebraic cobordism},
hidelinks
}

\usepackage[T1]{fontenc}
\usepackage{textcomp}

\geometry{left=3.5cm,right=3.5cm,top=3.5cm,bottom=3.5cm}
\swapnumbers

\DeclareMathOperator{\id}{id}
\DeclareMathOperator{\Spec}{Spec}
\DeclareMathOperator{\Proj}{Proj}
\DeclareMathOperator{\Sym}{Sym}
\DeclareMathOperator{\im}{im}
\DeclareMathOperator{\coker}{coker}
\DeclareMathOperator{\carac}{char}
\DeclareMathOperator{\CH}{CH}
\DeclareMathOperator{\Ch}{Ch}
\DeclareMathOperator{\Cht}{\underline{Ch}}
\DeclareMathOperator{\CHt}{\underline{CH}}
\DeclareMathOperator{\CHx}{E}
\DeclareMathOperator{\CHa}{A}
\DeclareMathOperator{\Hh}{H}
\DeclareMathOperator{\Ht}{\underline{H}}
\DeclareMathOperator{\Rh}{R}

\DeclareMathOperator{\Sm}{\mathbf{Sm}}
\DeclareMathOperator{\op}{op}
\DeclareMathOperator{\Res}{Res}

\newcommand{\Oc}{\mathcal{O}}
\newcommand{\Ec}{\mathcal{E}}
\newcommand{\Ic}{\mathcal{I}}
\newcommand{\Sc}{\mathcal{S}}
\newcommand{\Lc}{\mathcal{L}}
\newcommand{\Ac}{\mathcal{A}}
\newcommand{\Zz}{\mathbb{Z}}
\newcommand{\Nn}{\mathbb{N}}
\newcommand{\Qq}{\mathbb{Q}}
\newcommand{\Laz}{\mathbb{L}}
\newcommand{\Ld}{\mathbb{L}_2}
\newcommand{\Lp}{\mathbb{L}_p}
\newcommand{\bb}{\mathbf{b}}
\newcommand{\Fd}{{\mathbb{F}_2}}
\newcommand{\Fp}{{\mathbb{F}_p}}
\newcommand{\Pp}{\mathbb{P}}
\newcommand{\Tan}{T}
\newcommand{\mud}{{\mu_2}}
\newcommand{\lc}{\llbracket}
\newcommand{\rc}{\rrbracket}
\newcommand{\Dec}[1]{\operatorname{Dec}({#1})}
\newcommand{\fund}[1]{{#1}_f}

\newtheorem{theorem}{Theorem}[section]
\newaliascnt{proposition}{theorem}
\newtheorem{proposition}[proposition]{Proposition}
\newaliascnt{lemma}{theorem}
\newtheorem{lemma}[lemma]{Lemma}
\newaliascnt{corollary}{theorem}
\newtheorem{corollary}[corollary]{Corollary}

\theoremstyle{definition}
\newaliascnt{remark}{theorem}
\newtheorem{remark}[theorem]{Remark}
\newaliascnt{example}{theorem}

\newaliascnt{definition}{theorem}
\newtheorem{definition}[definition]{Definition}
\newaliascnt{notation}{theorem}

\newtheoremstyle{par}                
  {}                                     
  {}                                     
  {}                                     
  {}                                     
  {}                 	                  
  {.}                                    
  { }                                    
  {}
\theoremstyle{par}
\newtheorem{para}[theorem]{}

\counterwithin{equation}{theorem}
\numberwithin{equation}{theorem}

\newcommand{\rref}[1]{(\ref{#1})}
\newcommand{\dref}[2]{(\ref{#1}.\ref{#2})}

\begin{document}

\begin{abstract}
Consider an involution of a smooth projective variety over a field of characteristic not two. We look at the relations between the variety and the fixed locus of the involution from the point of view of cobordism. We show in particular that the fixed locus has dimension larger than its codimension when certain Chern numbers of the variety are not divisible by two, or four. Some of those results, but not all, are analogues of theorems in algebraic topology obtained by Conner--Floyd and Boardman in the sixties. We include versions of our results concerning the vanishing loci of idempotent global derivations in characteristic two. Our approach to cobordism, following Merkurjev's \cite{Mer-Ori}, is elementary, in the sense that it does not involve resolution of singularities or homotopical methods.
\end{abstract}

\author{Olivier Haution}

\title{Involutions and Chern numbers of varieties}

\email{olivier.haution at gmail.com}
\address{Mathematisches Institut, Ludwig-Maximilians-Universit\"at M\"unchen, Theresienstr.\ 39, D-80333 M\"unchen, Germany}
\thanks{This work was supported by the DFG grant HA 7702/1-1 and Heisenberg fellowship HA 7702/4-1.}
\subjclass[2010]{14L30; 19L41}
\keywords{involutions, fixed points, Chern numbers, algebraic cobordism}

\maketitle

\section{Introduction}
\numberwithin{theorem}{section}
\numberwithin{lemma}{section}
\numberwithin{proposition}{section}
\numberwithin{corollary}{section}
\numberwithin{example}{section}
\numberwithin{notation}{section}
\numberwithin{definition}{section}
\numberwithin{remark}{section}

Let $k$ be a field. The cobordism ring $\Laz$ is defined by identifying the smooth projective $k$-varieties which have the same collection of Chern numbers (indexed by monomials). Each such number is a geometrical invariant, defined as the degree of a monomial in the Chern classes of tangent bundle of the variety. Using instead modulo two Chern numbers yields the ring $\Ld$, a quotient of $\Laz$. Even though the base field $k$ is arbitrary, the ring $\Laz$ always coincides with the complex cobordism ring (the Lazard ring), and $\Ld$ with the unoriented cobordism ring. We will denote by $\lc Y \rc$ the class of a smooth projective $k$-variety $Y$ in either of these rings.

Consider an involution $\sigma$ of a connected smooth projective $k$-variety $X$. Assume that the characteristic of $k$ differs from two (that restriction may be lifted, see below). Denote by $N$ the normal bundle to the fixed locus $X^\sigma$ in $X$, by $\Pp(N)$ its projectivisation, and by $\Pp(N\oplus 1)$ its projective completion. Our first result is:
\begin{theorem}
\label{intro:main}
We have $\lc X\rc = \lc \Pp(N\oplus 1)\rc$ and $\lc \Pp(N)\rc=0$ in $\Ld$.
\end{theorem}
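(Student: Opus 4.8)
The plan is to combine deformation to the normal cone with the double point relation available in $\Ld$, reducing both assertions to a single vanishing statement about $\Pp^1$-bundles. Writing $F=X^\sigma$, I would form the deformation to the normal cone of $F$ in $X$, namely $W=\mathrm{Bl}_{F\times\{\infty\}}(X\times\Pp^1)$ with its projection $\pi\colon W\to\Pp^1$. The fibre $\pi^{-1}(t)$ is $X$ for $t\neq\infty$, while $\pi^{-1}(\infty)$ is the transverse union of $\tilde X=\mathrm{Bl}_F X$ and $\Pp(N\oplus1)$ glued along $E=\Pp(N)$, which sits in $\tilde X$ as the exceptional divisor and in $\Pp(N\oplus1)$ as the divisor at infinity. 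The double point relation then gives
\begin{equation*}
\lc X\rc=\lc\tilde X\rc+\lc\Pp(N\oplus1)\rc-\lc\Pp_E(\Oc\oplus N_{E/\tilde X})\rc
\end{equation*}
in $\Ld$, where $N_{E/\tilde X}=\Oc_{\Pp(N)}(-1)$. Since we work modulo two, the point is to prove that the last term vanishes and that $\lc\tilde X\rc=0$.

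The key observation is that $\sigma$ lifts to an involution $\tilde\sigma$ of $\tilde X$ whose fixed locus is exactly the divisor $E=\Pp(N)$: the involution acts by $-1$ on $N$, hence trivially on $\Pp(N)$, and freely away from $F$. Thus $\tilde\sigma$ is a reflection, and I would isolate as the central case an involution $\tau$ of a smooth projective $V$ whose fixed locus is a smooth divisor $D$, with $L=N_{D/V}$ the normal line bundle on which $\tau$ acts by $-1$. In characteristic not two such a reflection has smooth quotient $Q=V/\tau$, and $V\to Q$ is a double cover branched along $\bar D\cong D$ with $\Oc_Q(\bar D)=\mathcal{M}^{\otimes2}$ and $L=\mathcal{M}|_{\bar D}$. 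For reflections I would prove that both $\lc V\rc=0$ and $\lc D\rc=0$ in $\Ld$. Granting this and applying it to $(\tilde X,\tilde\sigma)$ gives $\lc\tilde X\rc=0$ and $\lc\Pp(N)\rc=\lc\bar D\rc=0$; substituting $\lc\tilde X\rc=0$ and the vanishing of the correction term into the displayed relation yields $\lc X\rc=\lc\Pp(N\oplus1)\rc$, which is both assertions at once.

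For the reflection case I would realise the relevant classes inside the $\Pp^1$-bundle $P=\Pp_Q(\mathcal{M}\oplus\Oc)$. Embedding $V$ as the relative conic $\{w^2=s\,u^2\}$, where $s\in\Gamma(Q,\mathcal{M}^{\otimes2})$ cuts out $\bar D$, I would connect $V$ through a pencil of relative conics to a union of two sections meeting transversally along a divisor $\bar D'\in|\mathcal{M}|$; the double point relation then gives $\lc V\rc\equiv\lc\Pp_{\bar D'}(\Oc\oplus\mathcal{M}|_{\bar D'})\rc$ modulo two. Likewise, degenerating a smooth $\bar D\in|\mathcal{M}^{\otimes2}|$ to $\bar D_1\cup\bar D_2$ with $\bar D_i\in|\mathcal{M}|$ meeting along $Z$ yields $\lc\bar D\rc\equiv\lc\bar D_1\rc+\lc\bar D_2\rc+\lc\Pp_Z(\Oc\oplus\mathcal{M}|_Z)\rc\equiv\lc\Pp_Z(\Oc\oplus\mathcal{M}|_Z)\rc$, since $\bar D_1$ and $\bar D_2$ are cobordant. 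Both reflection assertions therefore reduce to the \emph{core vanishing} $\lc\Pp_Z(\Oc\oplus\Lc)\rc=0$ in $\Ld$ for every line bundle $\Lc$ on a smooth projective $Z$.

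This core vanishing is where the real work lies, and it is the step I expect to be the main obstacle. When $\Lc$ is trivial it is immediate, as $\Pp_Z(\Oc\oplus\Oc)=\Pp^1\times Z$ and $\lc\Pp^1\rc=0$ because the only modulo-two Chern number of $\Pp^1$, namely $\deg c_1(T_{\Pp^1})=2$, is even. For general $\Lc$ I would argue by induction on $\dim Z$: choosing a section of a suitable twist of $\Lc$ with smooth zero locus $W$ and performing the elementary modification of $\Oc\oplus\Lc$ along $W$ produces a family over $\mathbb{A}^1$ whose general fibre is $\Pp_Z(\Oc\oplus\Lc)$ and whose special fibre is a double point degeneration involving $\Pp^1\times Z$ and a $\Pp^1$-bundle over $W$; the double point relation then expresses $\lc\Pp_Z(\Oc\oplus\Lc)\rc$ in terms of $\lc\Pp^1\times Z\rc=0$ and a class handled by the inductive hypothesis. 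The delicate points throughout—resolving the base loci of the various pencils so that the total spaces are smooth and their special fibres are genuine double point degenerations, and verifying that the elementary modification has exactly the claimed fibres—are precisely what must be executed with care; everything else is formal once the double point relation and this core vanishing are in hand.
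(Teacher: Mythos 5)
Your skeleton faithfully parallels the paper's proof: the same deformation space $\mathrm{Bl}_{F\times\{\infty\}}(X\times\Pp^1)$ (cf.\ \rref{lemm:deformation}), the same lift of $\sigma$ to the blowup with fixed locus exactly $\Pp(N)$ (cf.\ \rref{lemm:inv_blowup}), and the same reduction to reflections via the smooth quotient with $\Oc_Q(\bar D)=\mathcal{M}^{\otimes 2}$ (cf.\ \rref{lemm:inv_cod1}). But where the paper computes cohomologically, you degenerate geometrically, and this is where a genuine gap appears: your pencil of relative conics connecting $V=\{w^2=su^2\}$ to a union of two sections meeting along $\bar D'\in|\mathcal{M}|$, and your degeneration of $\bar D\in|\mathcal{M}^{\otimes 2}|$ to $\bar D_1\cup\bar D_2$ with $\bar D_i\in|\mathcal{M}|$, both require nonzero global sections of $\mathcal{M}$ with smooth zero loci. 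Only $\mathcal{M}^{\otimes 2}$ is known to be effective; $|\mathcal{M}|$ may be empty. For instance, take $Q$ a genus-two curve and $V\to Q$ the double cover branched along a reduced degree-two divisor $\bar D$ such that no half $\mathcal{M}$ of $\Oc_Q(\bar D)$ is effective (the generic degree-one bundle on $Q$ is non-effective): then the union of sections you want to degenerate to simply does not exist, and twisting $\mathcal{M}$ by an ample bundle changes $V$, so positivity cannot repair this. The paper sidesteps the issue entirely: Vishik's formula \rref{lemm:Vishik} gives $f_*[V]=[2]_{\Cht}(c_1(\mathcal{M}^\vee))/c_1(\mathcal{M}^\vee)$ for the double cover $f\colon V\to Q$, and since $[2]_{\Cht}(x)=0$ in $\Ld$ (\rref{lemm:FGL_Lp}) this vanishes; with the projection formula one gets both $\lc V\rc=0$ and $\lc D\rc=0$ (this is \rref{lemm:pure} for $m=0,1$), with no sections of $\mathcal{M}$ and no pencils.

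A second, lesser problem is the repeated appeal to ``the double point relation available in $\Ld$''. The theorem covers every $\carac k\neq 2$, while Levine--Pandharipande establish that relation in characteristic zero; in this paper's elementary framework each instance would have to be derived by hand, including the base-locus resolutions you defer, and for your conic pencils that derivation is essentially a geometric re-proof of \rref{lemm:Vishik}. For the deformation to the normal cone the needed instance is exactly \rref{lemm:deformation} (where $[-1]_{\Cht}(x)=x$ in $\Ld$ makes the correction term disappear from the paper's version), and your formula is consistent with it because your \emph{core vanishing} $\lc\Pp_Z(\Oc\oplus\Lc)\rc=0$ in $\Ld$ is indeed true. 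However, your inductive proof of it inherits the same Bertini-type existence issues (and needs $k$ infinite), whereas it follows in one line from \rref{lemm:Vishik} applied to the degree-two morphism $\Pp_Z(\Oc\oplus\Lc)\to\Pp_Z(\Oc\oplus\Lc^{\otimes 2})$ --- or is simply not needed once the blowup class is computed as in \rref{lemm:deformation}. In short: the decomposition is right, but the load-bearing steps must run through the formal-group-law identity $[2]_{\Cht}=0$ rather than through geometric degenerations, precisely because the geometry you invoke (members of $|\mathcal{M}|$, smooth pencils) is not always available.
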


These equalities are just the first in a series of relations in $\Ld$, heuristically asserting that the map $\Pp(N \oplus 1) \to \Pp^{\infty}$ given by the canonical line bundle is bordant (in the unoriented sense) to any constant map $X \to \Pp^\infty$. A precise statement is given in \rref{th:X_PN_L2}. They imply the analogue of a formula due to Kosniowsky--Stong \cite{KS} in algebraic topology. Their formula is the basis of a vast collection of results concerning the fixed locus of smooth involutions of closed unoriented manifolds \cite{KS,Pergher-Stong} (see also for instance \cite{Kelton-I,Kelton-II,Pergher-Improvement,Pergher-Fn}), many of which could probably be translated into algebraic geometry. We refrain from doing so, but state the formula in  \rref{cor:KS}, and explain in detail how to derive it.\\

An example of Chern number is the so-called Euler number. Its value for a smooth projective $k$-variety $Y$ of pure dimension $n$ is the integer
\[
\chi(Y) = \deg c_n(\Tan_Y) = \sum_{i=0}^{2n} (-1)^i\dim_{\Qq_{\ell}} H^i_{et}(Y_{\overline{k}},\Qq_{\ell}),
\]
where $\ell$ is any prime number unequal to the characteristic of $k$, and $\overline{k}$ an algebraic closure of $k$. It is well-known that $\chi(X)$ and $\chi(X^\sigma)$ have the same parity, a fact which can be reproved using Theorem~\ref{intro:main}. The relations in $\Ld$ mentioned above imply the following analogue of a theorem of Conner--Floyd \cite[(27.4)]{CF-book-1st}:
\begin{theorem}[cf.\ \dref{cor:E_dim}{cor:E_dim:1}]
\label{intro:Euler_parity}
If $\chi(X)$ is odd, then $2 \dim X^\sigma \geq \dim X$.
\end{theorem}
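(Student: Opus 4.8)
The plan is to read off the parity of $\chi(X)$ from the position of $\lc X\rc$ inside $\Ld$ via Theorem~\ref{intro:main}, but to do so through a formula for $\chi(X)\bmod 2$ in which the \emph{normal} Euler class of the fixed locus appears; the dimension of that class is what forces the inequality. Write $X^\sigma=\coprod_j F_j$ for the decomposition into connected components, set $d_j=\dim F_j$ and $c_j=\operatorname{codim}(F_j)=n-d_j$ with $n=\dim X$, and let $N_j=N|_{F_j}$ be the normal bundle, of rank $c_j$. I would prove the contrapositive: assuming $2d_j<n$, equivalently $c_j>d_j$, for every $j$, I must show that $\chi(X)$ is even.

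The reduction modulo two of the Euler number is the top mod-two Chern number, hence a well-defined $\Fd$-linear invariant on $\Ld$; by Theorem~\ref{intro:main} it takes the same value on $\lc X\rc$ and on $\lc\Pp(N\oplus 1)\rc=\sum_j\lc\Pp(N_j\oplus 1)\rc$. A direct computation with the relative Euler sequence and the projection formula gives $\chi(\Pp(N_j\oplus 1))=(c_j+1)\,\chi(F_j)$, so $\chi(X)\equiv\sum_j(c_j+1)\chi(F_j)\pmod 2$. As it stands this expression is insensitive to whether $c_j>d_j$, so the bare class equality of Theorem~\ref{intro:main} is \emph{not} enough: it does not by itself exclude, for instance, an even-dimensional $X$ with a single isolated fixed point. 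The decisive step is therefore to upgrade it to the refined relation underlying Theorem~\ref{intro:main} (the equality in the $\Ld$-bordism of $\Pp^\infty$ recorded in \rref{th:X_PN_L2}, equivalently the Kosniowski--Stong formula \rref{cor:KS}): the class of $\Pp(N\oplus 1)$ with its canonical line bundle agrees with a constant, so all of its characteristic numbers twisted by positive powers of $c=c_1(\Oc(1))$ vanish. Pushing these $c$-twisted numbers down the bundles $\Pp(N_j\oplus 1)\to F_j$ converts powers of $c$ into Segre classes of $N_j$, and lets one rewrite the crude sum in the localized form
\[
\chi(X)\;\equiv\;\sum_j \int_{F_j} c_{c_j}(N_j)\cdot\theta_j \pmod 2,
\]
where $\theta_j$ is a mod-two characteristic class of $F_j$ of codimension $d_j-c_j$ and $c_{c_j}(N_j)$ denotes the mod-two normal Euler class. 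I would obtain this by the same Segre-class bookkeeping that proves \rref{cor:KS}.

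Granting the localized formula, the conclusion is immediate: when $c_j>d_j$ for every component, the class $c_{c_j}(N_j)$ lives in $\CH^{c_j}(F_j)$ with $c_j>\dim F_j$, hence vanishes, so every summand is zero and $\chi(X)$ is even, which is the contrapositive sought. I expect the main obstacle to be precisely the passage from $\chi(X)\equiv\sum_j(c_j+1)\chi(F_j)$ to the localized expression carrying the normal Euler class: namely extracting from \rref{th:X_PN_L2} the correct combination of $c$-twisted Segre numbers and verifying that the complementary classes $\theta_j$ assemble as claimed, so that the codimension of the surviving normal class is exactly $c_j$. Everything surrounding it — multiplicativity of the Euler number, the relative Euler sequence, and the projective-bundle pushforward of powers of $c$ — is routine.
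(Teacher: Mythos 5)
Your overall strategy --- prove the contrapositive by feeding the refined relations of \rref{th:X_PN_L2} into the mod-two Euler number, localizing on the fixed components so that the normal Euler class $c_{c_j}(N_j)$ appears and then dies for dimension reasons when $c_j>d_j$ --- is a genuinely different route from the paper's, and it is essentially the Conner--Floyd line that the introduction alludes to. The paper instead deduces the statement as \dref{cor:E_dim}{cor:E_dim:1}: it works in $\Laz/2$ via \rref{th:Lmod2} and the specialized theory $\CHx$ (coefficients $\Zz[t]$, $b_i\mapsto(-1)^it^i$), whose coefficient ring $\fund{\CHx}$ is generated by $2t$ and $t^2$ (\rref{lemm:chi_odd}); this retains mod-four information, yielding both $\chi(X)\equiv\chi(X^\mud)\bmod 2$ (\rref{prop:E_fix}) and the divisibility $4\mid\chi(X^\mud)$ when $2\dim X^\mud<n-1$ (\rref{th:E_4}), from which (i) follows because $\chi(X)$ odd forces $n$ even. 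Your purely mod-two, $\Ld$-based route can at best recover the parity statement, never the mod-four refinements --- which is fine for this theorem, and is arguably closer in spirit to Conner--Floyd \cite[(27.4)]{CF-book-1st}.

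The genuine gap is the pivotal localized formula $\chi(X)\equiv\sum_j\deg\bigl(c_{c_j}(N_j)\,\theta_j\bigr)\bmod 2$: it is asserted, not derived, and it does \emph{not} follow from \rref{cor:KS} as you suggest. Applying \rref{cor:KS} with $f=y_n$ gives exactly $\chi(X)\equiv\sum_j\chi(F_j)\bmod 2$: in terms of Chern roots, $c_n(N\{1\}+\Tan_{X^\mud})$ carries the factor $\prod_l t_l$ of top tangent roots, which already has degree $d_j$, so only the degree-zero part of the normal contribution survives and the normal Euler class never appears. To make it appear you must re-run the residue bookkeeping of \rref{prop:KS}/\rref{prop:Quillen} on the twisted relations $\lc\xi^m\rc=0$, $m\geq 1$, and use the codimension hypothesis to discard the Segre corrections; moreover your $\theta_j$ must in general involve $N_j$ and not only $\Tan_{F_j}$. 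There is a short way to complete your argument with the paper's own tools, bypassing the universal formula: specialize $b_i\mapsto t^i$ (i.e.\ work in $\CHx$ modulo $2$), where \rref{lemm:CHx_rank_dim} gives $\lc c_1^{\CHx}(\Oc_{\Pp(V)}(1))^m\rc=(r-m)t^{r-1-m}\lc S\rc$ whenever $m\leq r-\dim S$. Under your hypothesis $c_j\geq d_j+1$ this applies with $m=1,2$ to each $N_j\oplus 1$ (of rank $c_j+1$), so the images of the relations of \rref{th:X_PN_L2} read, in $\Fd[t]$,
\begin{equation*}
\chi(X)\equiv\sum_j(c_j+1)\chi(F_j),\qquad
0\equiv\sum_j c_j\chi(F_j),\qquad
0\equiv\sum_j (c_j-1)\chi(F_j) \pmod 2,
\end{equation*}
and adding the last two gives $\sum_j\chi(F_j)\equiv 0$, whence $\chi(X)\equiv 0\bmod 2$. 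Note that \rref{lemm:CHx_rank_dim} --- the statement doing the work your ``routine bookkeeping'' hides --- is proved in the paper not by Segre-class manipulation but by a geometric induction, splitting off line-bundle quotients via \rref{lemm:nonzerosection}.
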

Note that Theorem \ref{intro:Euler_parity} is only interesting when $\dim X$ is even, because the Euler number of an odd-dimensional variety is always even. In order to cover the odd-dimensional case, we really need to look beyond $\Ld$. This motivates the search for relations between $X$ and the normal bundle $N$ in a larger quotient of $\Laz$ than $\Ld$. Since the ideal $2\Laz \subset \Laz$ consists of classes of varieties admitting a fixed-point-free involution (exchanging two copies of a given variety), the largest quotient of $\Laz$ where the class of $X$ has any chance of being determined by $N$ is $\Laz/2$. We prove that this is indeed the case, by giving a formula expressing the class of $X$ in $\Laz/2$ in terms of the tautological line bundle $\Oc(-1) \to \Pp(N\oplus 1)$. As in Theorem \ref{intro:main}, this formula is part of a series of relations in $\Laz/2$, which are stated in \rref{th:Lmod2}. However, unlike Theorem \ref{intro:main}, this formula is not readily usable for computations, because it involves the formal group law. One may try to overcome this difficulty by focusing on one particular Chern number at a time.

This strategy works well in the case of the Euler number, allowing us to prove:
\begin{theorem}[cf.\ \dref{prop:E_fix}{prop:E_fix:2} and \dref{cor:E_dim}{cor:E_dim:2}]
\label{intro:Euler_4}
Assume that $\dim X$ is odd. Then
\begin{enumerate}[(i)]
\item $\chi(X) = \chi(X^\sigma) \mod 4$.

\item  If $\chi(X)$ is not divisible by $4$, then $2 \dim X^\sigma +1 \geq \dim X$.
\end{enumerate}
\end{theorem}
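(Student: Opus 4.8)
The plan is to study both assertions through the Euler number, regarded as the ring homomorphism $\chi\colon\Laz\to\Zz$ sending a class to the degree of the top Chern class of the tangent bundle. Write $X^\sigma=\bigsqcup_i F_i$ for the decomposition into connected components, let $N_i$ be the normal bundle of $F_i$, and let $c_i=\dim X-\dim F_i$ be its codimension. For part (i) I would avoid the formal group law entirely and argue geometrically. Since $\carac k\neq 2$, the blow-up $\widetilde X$ of $X$ along $X^\sigma$ is smooth projective, $\sigma$ lifts to an involution whose fixed locus is exactly the exceptional divisor $\Pp(N)=\bigsqcup_i\Pp(N_i)$, and the lifted action on the normal bundle of that divisor is by $-1$. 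Hence the quotient $W=\widetilde X/\sigma$ is again smooth projective of dimension $n:=\dim X$, and $\widetilde X\to W$ is a double cover branched along the (isomorphic) image of $\Pp(N)$.

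Combining the Euler-number formula for a blow-up, $\chi(\widetilde X)=\chi(X)+\sum_i(c_i-1)\chi(F_i)=\chi(X)+\chi(\Pp(N))-\chi(X^\sigma)$, with the one for a double cover branched along a smooth divisor, $\chi(\widetilde X)=2\chi(W)-\chi(\Pp(N))$, yields the exact identity
\[
\chi(X)=2\chi(W)-2\chi(\Pp(N))+\chi(X^\sigma).
\]
Now $\dim W=n$ is odd, so $\chi(W)$ is even by the fact recalled in the introduction; and $\chi(\Pp(N))$ is even because $\lc\Pp(N)\rc=0$ in $\Ld$ by Theorem~\ref{intro:main}, the Euler number modulo two being one of the Chern numbers it kills. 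Reducing the displayed identity modulo $4$ gives $\chi(X)\equiv\chi(X^\sigma)$, which is part (i).

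For part (ii) this geometric identity only recovers (i), so the finer information must come from the relations in $\Laz/2$ of \rref{th:Lmod2}. Choosing a homogeneous lift, I would write $\lc X\rc=P+2Z$ in $\Laz$, where $P$ is the explicit class built from the pair $(\Pp(N\oplus 1),\Oc(-1))$ and $Z$ is homogeneous of degree $n$. Because $n$ is odd, every variety occurring in $Z$ is odd-dimensional, so $\chi(Z)$ is even and $2\chi(Z)\equiv 0\pmod 4$; hence $\chi(X)\equiv\chi(P)\pmod 4$, and by part (i) this $\chi(P)$ is congruent to $\chi(X^\sigma)$. It then remains to prove the contrapositive of (ii): that $\chi(P)\equiv 0\pmod 4$ whenever every component satisfies $2\dim F_i+1<n$, i.e.\ $c_i\geq\dim F_i+2$.

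The hard part is precisely this last vanishing. The Euler number of $P$ unwinds, via the projection $\Pp(N\oplus 1)\to X^\sigma$, into a sum over the $F_i$ of twisted characteristic numbers in which $c_1(\Oc(-1))$ enters to powers governed by the fibre dimension $c_i$ and is expanded through the formal group law. When the codimension exceeds the dimension by at least two, I expect these contributions to organise, after collecting the formal-group corrections, into terms that are manifestly divisible by $4$: the excess of $c_i$ over $\dim F_i$ should force the surviving monomials in the Chern classes of $F_i$ to exceed degree $\dim F_i$, so that the only residual terms are the even, paired quantities already controlled by the branched-cover identity of part (i). Making this bookkeeping precise, and controlling the formal-group corrections uniformly in the $c_i$, is the main obstacle; the dimension hypothesis is exactly what makes the estimate close, in the same spirit as the analogous step behind Theorem~\ref{intro:Euler_parity}.
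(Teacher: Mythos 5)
Your part (i) is correct and takes a genuinely different route from the paper. The paper never forms the quotient of the blow-up: it pushes the mod-$2$ relations of \rref{th:Lmod2} into the specialised theory $\CHx$, where the formal inverses $[\pm 1]_{\CHx},[\pm 2]_{\CHx}$ are explicit rational functions (\rref{lemm:CHx_FGL}, \rref{lemm:CHx_rank_dim} is used later), and converts mod-$2$ relations in $\fund{\CHx}^{-n}$ into mod-$4$ integer congruences via the key structural fact $\fund{\CHx}=\Zz[2t,t^2]$ (\rref{lemm:chi_odd}). Your identity $\chi(X)=2\chi(W)-2\chi(\Pp(N))+\chi(X^\sigma)$ is the Euler-number shadow of the same double-cover geometry (the quotient $Z=Y/\mud$ in the proof of \rref{th:Lmod2}, with Vishik's formula \rref{lemm:Vishik} playing the role of your branched-cover formula), so the two arguments are cousins; yours is shorter and more transparent, but it imports \'etale-cohomological input the paper deliberately avoids: over a field of characteristic $p>2$, the step $\chi(\widetilde X\setminus\Pp(N))=2\chi(W\setminus B)$ is multiplicativity of the $\ell$-adic Euler characteristic in a finite \'etale cover, which holds here because the cover is tame (degree two, $p\neq 2$) but is a nontrivial theorem, and the argument has no analogue in the paper's characteristic-$2$ $\mud$-setting (harmless for the statement as phrased, which assumes an involution). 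Your auxiliary parity claims ($\chi(W)$ even since $\dim W$ odd, $\chi(\Pp(N))$ even via Theorem~\ref{intro:main}, and $\chi(Z)$ even in your lift $\lc X\rc=P+2Z$) are all fine.

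For part (ii), however, there is a genuine gap, which you yourself flag: the divisibility $\chi(P)\equiv 0 \bmod 4$ --- equivalently, by your part (i), $\chi(X^\sigma)\equiv 0\bmod 4$ whenever $2\dim X^\sigma<n-1$, which is exactly the paper's \rref{th:E_4} --- is asserted as an expectation, not proved, and the bookkeeping you sketch cannot close as stated: arguments that merely kill Chern classes of $X^\sigma$ in degrees above $\dim X^\sigma$ only see information mod $2$, whereas the conclusion is mod $4$. The paper's proof supplies two ingredients absent from your sketch. First, the quantitative pushforward lemma \rref{lemm:CHx_rank_dim}: if $V$ has rank $r$ over $S$ of dimension $d$ and $0\leq m\leq r-d$, then $\lc c_1^{\CHx}(\Oc_{\Pp(V)}(1))^m\rc=(r-m)t^{r-1-m}\lc S\rc$ \emph{exactly}, not merely mod $2$; its proof is genuinely geometric (via \rref{lemm:nonzerosection} one produces, for every integer $s$, an exact sequence $0\to W\to V\to L^{\otimes s}\to 0$, giving the congruence mod $s$ by induction, hence equality), and the hypothesis $2\dim X^\sigma<n-1$ enters precisely as the inequality $m\leq r-d$ for $m=2,3$ applied to the bundles $N^r\oplus 1$. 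Second, one must use the higher relations of \rref{th:Lmod2} (those with $m\geq 1$, not the $m=0$ relation defining your class $P$): in $\CHx$ they yield $\lc\xi^2\rc-t\lc\xi^3\rc=0$ in $\fund{\CHx}^{2-n}/2$, the lemma converts the left-hand side into the telescoping sum $\sum_r t^{r-2}\lc F^r\rc$, and applying $t\mapsto 1$ lands in $2\Zz$ because $2-n$ is odd (\rref{lemm:chi_odd}), which is what upgrades the mod-$2$ relation to the mod-$4$ conclusion $\chi(X^\sigma)\in 4\Zz$. Without an analogue of \rref{lemm:CHx_rank_dim}, or some other structural source of the extra factor of $2$, your proposed organisation of the formal-group corrections has no mechanism to produce divisibility by four.
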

We could find in the literature no analogue of this theorem in algebraic topology. To prove it, the idea is to construct an oriented cohomology theory $\CHx$ which captures just enough information, in the sense that the class of a smooth projective $k$-variety in $\CHx(\Spec k)$ is determined by, and determines its dimension and Euler number. We then exploit the formula in $\Laz/2$ mentioned above by considering its trace in the theory $\CHx$.

An element of $\Laz/2$ or $\Ld$ is called decomposable if it is represented by a disjoint union of products of pairs of positive-dimensional smooth projective $k$-varieties. The decomposability of the class of a smooth projective $k$-variety is governed by the value of its so-called additive Chern number. We prove:
\begin{theorem}[cf.\ \rref{th:indec}]
\label{intro:dec}
Assume that the class of $X$ in $\Laz/2$ is indecomposable. Then $2 \dim X^\sigma +1 \geq \dim X$.
\end{theorem}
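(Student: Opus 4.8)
The plan is to reduce the statement to a congruence satisfied by a single characteristic number, the additive Chern number of $X$, and then to compute that number from the fixed-point data by feeding the master relation in $\Laz/2$ of \rref{th:Lmod2} into a suitable oriented cohomology theory, exactly as the theory $\CHx$ is used to control the Euler number. To make the first step precise, recall that $\Laz$ is a polynomial ring with one generator in each positive degree, so the indecomposable quotient $Q(\Laz/2)$ is one-dimensional over $\Fd$ in each degree. Writing $n = \dim X$, the additive Chern number $\deg s_n(\Tan_X)$ — the degree of the characteristic class given by the $n$-th power sum of the Chern roots — vanishes on products of positive-dimensional varieties, because the power sum is primitive, and by Milnor's computation of this number on a polynomial generator it governs the image of $\lc X\rc$ in $Q(\Laz/2)_n$: that image vanishes precisely when $\deg s_n(\Tan_X)$ is even, if $n \neq 2^j - 1$, respectively divisible by $4$, if $n = 2^j - 1$. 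Proving the theorem is thus equivalent to proving its contrapositive: if $n \geq 2d + 2$, where $d = \dim X^\sigma$, then $\deg s_n(\Tan_X)$ satisfies this congruence.

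Next I would compute $\deg s_n(\Tan_X)$ from the fixed locus. The relation of \rref{th:Lmod2} expresses $\lc X\rc$ in $\Laz/2$ as a pushforward along $\Pp(N\oplus 1)\to\Spec k$ of a characteristic class built from the tautological bundle $\Oc(-1)$ and the formal group law; concretely the formal group enters through its $[2]$-series, coming from the normal directions on which $\sigma$ acts by $-1$. It is essential here to use this full relation rather than the coarser identity $\lc X\rc = \lc\Pp(N\oplus 1)\rc$ of Theorem~\ref{intro:main}: a direct projective-bundle computation shows that $\deg s_n(\Tan_{\Pp(N\oplus 1)})$ need not be even under the hypothesis $n\ge 2d+2$ (already for $d=2$, $n=6$ it is congruent to $\deg c_2(N)$ modulo $2$), so it is the formal-group corrections that produce the vanishing. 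Because $s_n$ is additive in the Chern roots, evaluating it on the pushforward reduces, via the projection formula and the relative Euler sequence, to degrees over $X^\sigma$ of polynomials in $c_1(\Oc(-1))$, the Chern classes of $N$, and the coefficients of the $[2]$-series. All Chern data of $N$ lives in $\CH^{\le d}(X^\sigma)$, while modulo $2$ the $[2]$-series is a power series in square powers; since the normal bundle has rank $n - d \ge d + 2$, these two constraints together force every surviving monomial to carry an even binomial coefficient, giving the congruence demanded above. I would organise this computation by constructing, in parallel with $\CHx$, an oriented cohomology theory whose value on $\Spec k$ records exactly the dimension and the additive Chern number, and taking the trace of the $\Laz/2$ relation in it; there the formal group law and its $[2]$-series take the manageable truncated form that makes the binomial bookkeeping transparent.

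The main obstacle is this final evaluation, and within it the exceptional degrees $n = 2^j - 1$. For such $n$ the modulo $2$ additive number vanishes identically, so detecting indecomposability requires the finer statement that $\deg s_n(\Tan_X)$ be divisible by $4$; this is an integral, not merely a modulo $2$, computation, and the extra factor of $2$ has to be extracted from the $2$-adic valuation of the coefficients of the $[2]$-series. I expect the auxiliary cohomology theory, set up with $\Zz_{(2)}$- or $\Zz/4$-coefficients, to be the device that exposes this extra divisibility uniformly, while the remaining difficulty is purely the Kummer–Lucas analysis of carries needed to run the parity argument simultaneously over the several components of $X^\sigma$, which may have different dimensions and hence different normal ranks.
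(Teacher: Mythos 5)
Your proposal follows essentially the same route as the paper: the reduction of indecomposability in $\Laz/2$ to a congruence on the additive Chern number is \dref{lemm:indec_Ld_L2}{lemm:indec_Ld_L2:3}, your auxiliary oriented theory recording exactly the dimension and the additive Chern number is the paper's $\CHa$, and taking the trace of the relations of \rref{th:Lmod2} in it, followed by the binomial-coefficient bookkeeping with the mod-$4$ refinement in the exceptional degrees $n=2^q-1$, is precisely what is carried out in \rref{lemm:X_PN_A} and \rref{th:additive_Chern}. The only cosmetic differences are your use of the power sum $s_n(\Tan_X)$ in place of $c_{(n)}(-\Tan_X)$ (these agree up to sign, so the congruences are unaffected) and your suggestion of $\Zz_{(2)}$- or $\Zz/4$-coefficients, where the paper works with $\Zz[t,\varepsilon]/\varepsilon^2$ integrally and reduces modulo $2$ and $4$ afterwards.
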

When $\dim X+1$ is not a power of two, decomposabilities in $\Laz/2$ and $\Ld$ are equivalent (see \dref{lemm:indec_Ld_L2}{lemm:indec_Ld_L2:2}), and Theorem~\ref{intro:dec} follows from the Theorem~\ref{intro:main} (and the corresponding supplementary relations in $\Ld$). In this case, Theorem~\ref{intro:dec} is an algebraic analogue of a theorem of Boardman in topology \cite[second part of Theorem 1]{Boardman-BAMS}. We are not aware of a topological analogue of Theorem~\ref{intro:dec} when $\dim X +1$ is a power of two. As above, the idea for the proof in that case is to construct an oriented cohomology theory $\CHa$ such that the class of a smooth projective $k$-variety in $\CHa(\Spec k)$ is determined by, and determines its dimension and additive Chern number.\\

All of our results are actually valid when the characteristic of $k$ is arbitrary, provided that we consider $\mud$-actions instead of involutions. In characteristic not two, those concepts coincide. A $\mud$-action in characteristic two is an idempotent global derivation, and the fixed locus $X^\mud$ is the vanishing locus of the derivation.

Of course involutions do exist in characteristic two, and it would be interesting to cover that case also. The category of smooth projective varieties, crucial for the use of cobordism theory, seems inadequate in that case, because the constant group $\Zz/2$ is not linearly reductive, and it is easy to find involutions on smooth projective varieties whose fixed locus is singular. If one is willing to work with singular schemes, it is possible to obtain results on involutions in characteristic two involving the Segre class of the normal cone to the fixed locus \cite{df}.\\

Finally, let us explain why we limit ourselves to $\mu_p$-actions for the prime $p=2$. If $\mu_p$ acts on a smooth $k$-variety $X$, any eigenvalue for the induced $\mu_p$-action on the normal bundle $N$ to the fixed locus in $X$ must be a nontrivial $p$-th root of unity. When $p=2$, the only possible eigenvalue is $-1$, so that $\mu_p$ must act trivially on $\Pp(N)$ (see \rref{lemm:inv_blowup} below). For odd primes $p$, results of the type given in this paper would necessarily involve the normal bundle $N$ together with its $\mu_p$-action, substantially reducing their usability. As an illustration, assume that $\dim X>0$ and that $k$ is algebraically closed. In case $p=2$, the relations in  $\Ld$ mentioned just after Theorem \ref{intro:main} imply that the number of fixed points, if finite, must be even (see \rref{cor:trivial_normal}). As explained in \cite{isol}, when $p$ is odd we can only say that this number cannot be one. The integer which must be prime to $p$ is the number of fixed points counted with multiplicities determined by the $\mu_p$-actions on the tangent spaces (see \cite[(4.3.4)]{isol} for a precise formula).

\newpage

\section{Oriented cohomology theories}
\numberwithin{theorem}{subsection}
\numberwithin{lemma}{subsection}
\numberwithin{proposition}{subsection}
\numberwithin{corollary}{subsection}
\numberwithin{example}{subsection}
\numberwithin{notation}{subsection}
\numberwithin{definition}{subsection}
\numberwithin{remark}{subsection}

\label{sect:oct}

\subsection{Axioms}
\label{sect:basic}
\hfill\\

\vspace*{-1em}

We fix a base field $k$ for the whole paper. We denote by $\Sm_k$ the category of smooth quasi-projective $k$-schemes. The tangent bundle of $X \in \Sm_k$ is denoted by $\Tan_X$, and the Grothendieck group of vector bundles on $X$ by $K_0(X)$.

\begin{para}
Let $E$ be a vector bundle over $X \in \Sm_k$, and $\Ec$ its $\Oc_X$-module of sections. We will denote by $\Pp(E)$ or $\Pp(\Ec)$ the scheme $\Proj_X(\Sym \Ec^\vee)$. This is the dual of the convention used in \cite{LM-Al-07}.
\end{para}

\begin{definition}
A cartesian square in $\Sm_k$
\begin{equation}
\label{eq:transverse}
\begin{gathered}
\xymatrix{
W\ar[r]^h \ar[d]_e & Z \ar[d]^g\\ 
Y \ar[r]^f& X
}
\end{gathered}
\end{equation}
is called \emph{transverse} if, for every connected component $W_0$ of $W$, denoting by 
$Y_0,Z_0,X_0$ the connected components of $Y,Z,X$ containing the images of $W_0$,
\[
\dim W_0 + \dim X_0 = \dim Y_0 + \dim Z_0.
\]
\end{definition}

\begin{definition}[{\cite[Definition~1.1.2]{LM-Al-07}}]
\label{def:oct}
A functor $\Hh$ from $\Sm_k^{\op}$ to the category of $\Zz$-graded rings, together with a group morphism $f_*^{\Hh} \colon \Hh(Y) \to \Hh(X)$ for each projective morphism $f\colon Y \to X$ in $\Sm_k$, is called an \emph{oriented cohomology theory} if the conditions \eqref{def:oct:1}--\eqref{def:oct:HI} below are satisfied. We write $f^*_{\Hh}$ instead of $\Hh(f)$ when $f$ is a morphism in $\Sm_k$, and denote by $\Hh^n(X)$ the degree $n$ component of $\Hh(X)$.

\begin{enumerate}[(i)]
\item \label{def:oct:1} If $X,Y \in \Sm_k$ are connected and $f\colon Y \to X$ is a projective morphism, then $f_*^{\Hh}$ is graded of degree $\dim X -\dim Y$.

\item For any $X,Y \in \Sm_k$ the natural morphism $\Hh(X \sqcup Y) \to \Hh(X) \times \Hh(Y)$ is bijective.

\item \label{def:oct:PF} If $f\colon Y \to X$ is a projective morphism in $\Sm_k$, then $f_*^{\Hh} (a f^*_{\Hh}(b)) = f_*^{\Hh}(a)b$ for any $a\in \Hh(Y), b \in \Hh(X)$.

\item If $X \in \Sm_k$, then $(\id_X)_* = \id_{\Hh(X)}$. If $f\colon Y \to X$ and $g \colon Z \to Y$ are projective morphisms in $\Sm_k$, then $f^{\Hh}_* \circ g^{\Hh}_* =(f \circ g)^{\Hh}_*$.

\item \label{def:oct:TR} Given a transverse square \eqref{eq:transverse} with $f$ projective, we have $h_*^{\Hh} \circ e^*_{\Hh} = g^*_{\Hh} \circ f_*^{\Hh}$.

\item \label{def:oct:PB} Let $E$ be a vector bundle over $X \in \Sm_k$ and $p\colon \Pp(E) \to X$ the associated projective bundle. Denote by $s\colon \Pp(E) \to \Oc_{\Pp(E)}(1)$ the zero-section of the canonical bundle, and write $\xi = s^*_{\Hh} \circ s_*^{\Hh}(1)$. Then $1,\xi,\cdots,\xi^{r-1}$ is a basis of the $\Hh(X)$-module $\Hh(\Pp(E))$ (for the structure induced by $p^*_{\Hh}$).

\item \label{def:oct:HI} Let $p\colon V \to X$ be a torsor under a vector bundle over $X \in \Sm_k$. Then $p^*_{\Hh}\colon \Hh(X) \to \Hh(V)$ is bijective.
\end{enumerate}
\end{definition}

\begin{para}
One sees easily that the axioms of \rref{def:oct} are equivalent to those of \cite[\S2]{Mer-Ori}, with the word ``tautological'' replaced by ``canonical'' in \cite[\S2, (iii)]{Mer-Ori}. Moreover it follows from \eqref{eq:-1} below that oriented cohomology theories in the sense of \rref{def:oct} are also oriented cohomology theories in the sense of \cite[\S2]{Mer-Ori}.
\end{para}

\begin{para}
\label{p:CH_is_oct}
The Chow ring $\CH$ is an oriented cohomology theory, see e.g.\ \cite{Ful-In-98}.
\end{para}

For the rest of \S \ref{sect:oct}, we fix an oriented cohomology theory $\Hh$.

\begin{para}
\label{ex:tensor_oct}
If $\Hh(\Spec k) \to R$ is a morphism of $\Zz$-graded rings, then the functor $\Hh \otimes_{\Hh(\Spec k)} R$ is naturally an oriented cohomology theory.
\end{para}

\begin{para}
\label{p:Chern}
Let $V$ be a vector bundle of rank $r$ over $S\in \Sm_k$. Using the notation of \dref{def:oct}{def:oct:PB} for $E=V^\vee$ and $X=S$, the Chern classes $c_i^{\Hh}(V) \in \Hh^i(S)$ are defined using Grothendieck's method \cite{Gro-58} by setting $c_i^{\Hh}(V) =0$ if $i \not \in \{0,\cdots,r\}$ and
\[
\sum_{i=0}^r (-1)^ip_{\Hh}^*\big(c_i^{\Hh}(V)\big) \xi^{r-i} = 0 \in \Hh(\Pp(E)).
\]
\end{para}
\begin{para}
We will use the simplified notation $f_*,f^*,c_i$ instead of $f_*^{\Hh},f^*_{\Hh}, c_i^{\Hh}$ when no confusion seems likely to arise. If $j\colon Y \to X$ is a closed immersion in $\Sm_k$, we will write $[Y] = j_*(1) \in \Hh(X)$. 
\end{para}

\begin{para}
\label{p:c1}
Let $L$ be a line bundle over $X\in \Sm_k$. Then $c_1(L) = s^* \circ s_*(1)$, where $s \colon X \to L$ is the zero-section (this follows from \dref{def:oct}{def:oct:PB}).
\end{para}

\begin{para}
\label{p:c1_Cartier}
If $D \to X$ is an effective Cartier divisor in $\Sm_k$, then $[D] = c_1(\Oc(D)) \in \Hh(X)$. This follows from \rref{p:c1}, \dref{def:oct}{def:oct:HI}, \dref{def:oct}{def:oct:TR} (see \cite[Proposition~3.2]{Mer-Ori}).
\end{para}

\begin{para}
If $f\colon Y \to X$ is a morphism in $\Sm_k$ and $E$ a vector bundle over $X$, then $f^*c_i(E) = c_i(f^*E) \in \Hh(Y)$ for all $i$.
\end{para}

\begin{para}
\label{p:obm}
An oriented cohomology theory defines an ``oriented Borel--Moore weak homology theory on $\mathbf{Sm}_k$'' by \cite[Proposition 5.2.4]{LM-Al-07}, hence an ``oriented Borel--Moore functor of geometric type on $\mathbf{Sm}_k$'' by \cite[Remark 4.1.10]{LM-Al-07}.
\end{para}

\begin{para}
\label{p:nilp}
Let $X \in \Sm_k$. Then for $n$ large enough, and line bundles $L_1,\cdots,L_n$ over $X$, we have $c_1(L_1)\cdots c_1(L_n) =0 \in \Hh(X)$. This follows from \rref{p:obm} and \cite[Lemma~4.1.3]{LM-Al-07}.
\end{para}

\begin{para}
\label{p:frac}
Let $L$ be a line bundle over $X \in \Sm_k$. By \rref{p:nilp}, we may evaluate a power series in $\Hh(\Spec k)[[x]]$ at $c_1(L)$ to obtain an element of $\Hh(X)$. If $f,g \in \Hh(\Spec k)[[x]]$ are such that $f=gh$ for some $h \in \Hh(\Spec k)[[x]]$, we will write
\[
\frac{f(c_1(L))}{g(c_1(L))} = h(c_1(L)) \in \Hh(X).
\]
\end{para}

\begin{para}
\label{p:Whitney}
(Whitney product formula) If $0\to E_1 \to E_2 \to E_3\to 0$ is an exact sequence of vector bundles over $X \in \Sm_k$, then for all $n$
\[
c_n(E_2) = \sum_{i=0}^n c_i(E_1) c_{n-i}(E_3) \in \Hh(X).
\]
This follows from \cite[Proposition~4.1.15]{LM-Al-07}, in view of \rref{p:obm}.
\end{para}

\begin{para}
\label{p:splitting}
(Splitting principle) Let $E$ be a vector bundle of rank $r$ over $X\in \Sm_k$. Then there is a composite of projective bundles $q\colon P \to X$ such that $q^*E$ admits a filtration by subbundles whose successive quotients are line bundles $L_1,\cdots,L_r$. By \dref{def:oct}{def:oct:PB} the pullback $q^* \colon \Hh(X) \to \Hh(P)$ is injective. By the Whitney product formula \rref{p:Whitney}, we have $c_i(q^*E)=\sigma_i(c_1(L_1),\cdots,c_1(L_r))$ for all $i=0,\cdots,r$, where $\sigma_i$ is the $i$-th elementary symmetric function in $r$ variables.
\end{para}

\begin{para}
\label{p:Chern_virtual}
Let $E$ be a vector bundle over $X \in \Sm_k$. Then the class $c_i(E)$ vanishes for $i$ large enough by \rref{p:nilp} and \rref{p:splitting}, so that the class $c(E) = 1+c_1(E) + \cdots$ is invertible in $\Hh(X)$. If $E_1,E_2$ are vector bundles over $X\in \Sm_k$, then the class $c(F) = c(E_1)c(E_2)^{-1} \in \Hh(X)$ depends only on $F=E_1-E_2 \in K_0(X)$ by the Whitney product formula \rref{p:Whitney}. We denote by $c_i(F)$ its component in $\Hh^i(X)$.
\end{para}

\begin{para}
Let $V$ be a vector bundle of rank $r$ over $S \in \Sm_k$, and $p\colon \Pp(V) \to S$ the associated projective bundle. We may view the tautological line bundle $\Oc(-1)$ as a subbundle of $p^*V$. The quotient $Q = p^*V/\Oc(-1)$ has rank $r-1$, hence the Whitney product formula \eqref{p:Whitney} yields in $\Hh(\Pp(V))$
\[
0=c_r(Q) = \sum_{i=0}^r c_i(p^*V) c_{r-i}(-\Oc(-1))  = \sum_{i=0}^r (-1)^{r-i}p^*\big(c_i(V)\big) c_1(\Oc(-1))^{r-i}.
\]
It follows that the Chern classes $c_i$ are the same as those defined in \cite[\S3]{Mer-Ori}.
\end{para}

\subsection{Cohomology of the point}

\begin{para}
When $X$ is a smooth projective $k$-scheme, with structural morphism $p \colon X \to \Spec k$, we will write in $\Hh(\Spec k)$
\[
\lc X \rc = p_*(1) \quad \text{ and } \quad \lc u \rc = p_*(u) \text{ for } u \in \Hh(X).
\]
When $\Hh=\CH$ (or $\CH/p$), we will write $\deg(u) \in \Zz$ (or $\Fp$), instead of $\lc u\rc$. 
\end{para}

\begin{para}
\label{p:Hc}
We will denote by $\fund{\Hh} \subset \Hh(\Spec k)$ the subgroup generated by the elements $\lc X \rc$, for $X$ a smooth projective $k$-scheme. It is a graded subring (see \cite[Proposition~2.5]{Mer-Ori}), whose degree $n$ component we will denote by $\fund{\Hh}^n$.
\end{para}

\begin{proposition}[{\cite[Corollary~9.10]{Mer-Ori}}]
Let $E_1,\cdots,E_n$ be vector bundles over $X \in \Sm_k$, and $i_1,\cdots,i_n \in \Nn$. Then $\lc c_{i_1}(E_1)\cdots c_{i_n}(E_n)\rc \in \fund{\Hh}$.
\end{proposition}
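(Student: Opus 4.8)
The plan is to reduce the statement in two stages: first, by the splitting principle, to the case of a product of \emph{first} Chern classes of line bundles, and then, by induction on dimension, to the defining generators $\lc Y\rc$ of $\fund{\Hh}$. Let $p\colon X\to\Spec k$ be the structural morphism and set $\alpha=c_{i_1}(E_1)\cdots c_{i_n}(E_n)\in\Hh(X)$. By \rref{p:splitting} I would choose a composite of projective bundles $q\colon P\to X$ such that each $q^*E_j$ is filtered with line-bundle quotients; then by the Whitney formula \rref{p:Whitney} the class $q^*\alpha=\prod_j c_{i_j}(q^*E_j)$ is a $\Zz$-linear combination of monomials in first Chern classes of line bundles on $P$. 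Combining the projection formula \dref{def:oct}{def:oct:PF} with the normalisation $p_*(\xi^{r-1})=1$ (a standard consequence of \dref{def:oct}{def:oct:PB}, where $\xi=c_1(\Oc(1))$), one produces a class $\eta\in\Hh(P)$, itself a monomial in first Chern classes of line bundles, such that $q_*(\eta\cdot q^*a)=a$ for every $a\in\Hh(X)$. Since $P$ is smooth projective, functoriality of pushforward (\dref{def:oct}{def:oct:PF} and its compatibility with composition) gives, with $\pi=p\circ q$,
\[
\lc\alpha\rc=p_*\alpha=\pi_*(\eta\cdot q^*\alpha),
\]
and $\eta\cdot q^*\alpha$ is a $\Zz$-linear combination of monomials $c_1(M_1)\cdots c_1(M_d)$ in first Chern classes of line bundles on $P$. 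It therefore suffices to prove that $\lc c_1(M_1)\cdots c_1(M_d)\rc\in\fund{\Hh}$ for arbitrary line bundles $M_1,\dots,M_d$ on a smooth projective $k$-scheme $P$.

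I would prove this by induction on $\dim P$, the case $\dim P=0$ being immediate (every line bundle is then trivial, so the product vanishes when $d\geq1$, while $\lc 1\rc=\lc P\rc\in\fund{\Hh}$). For the inductive step I would first arrange each factor to be the first Chern class of a line bundle admitting a smooth member: writing $M_l=A_l\otimes B_l^{-1}$ with $A_l,B_l$ very ample and admitting smooth divisors, and expanding $c_1(M_l)$ through the formal group law of $\Hh$ as a power series in $c_1(A_l)$ and $c_1(B_l)$ (a finite sum by \rref{p:nilp}), the product $c_1(M_1)\cdots c_1(M_d)$ becomes a $\Zz$-linear combination of monomials in first Chern classes of line bundles of this good type. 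For such a monomial I pick a smooth divisor $D\in|M_1|$, so that $c_1(M_1)=[D]=j_*(1)$ with $j\colon D\hookrightarrow P$ by \rref{p:c1_Cartier}; the projection formula \dref{def:oct}{def:oct:PF} then gives
\[
c_1(M_1)\cdots c_1(M_d)=j_*\big(c_1(M_2|_D)\cdots c_1(M_d|_D)\big),
\]
whence $\lc c_1(M_1)\cdots c_1(M_d)\rc=\lc c_1(M_2|_D)\cdots c_1(M_d|_D)\rc$, the latter computed on $D$. As $\dim D=\dim P-1$, the induction hypothesis (stated for \emph{all} line bundles on smooth projective schemes of smaller dimension) applies and places this class in $\fund{\Hh}$.

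The hard part is not the formal bookkeeping but the existence of the smooth divisors underlying the peeling step. Over an infinite field this is classical Bertini; over a finite field an arbitrary very ample system need not contain a smooth member, so I would instead take $A_l,B_l$ to be sufficiently high multiples of an ample bundle and invoke Poonen's Bertini theorem to secure smooth divisors (alternatively, a base-change to an infinite field together with a descent argument). A secondary, purely routine point is to verify that every manipulation---the symmetric-function expansion in the first stage, the formal-group-law expansion, and the restriction to divisors---produces only $\Zz$-linear combinations, so that the terminal classes $\lc Y\rc$, with $Y$ an iterated smooth divisor inside $P$, are exactly the generators of $\fund{\Hh}$ and the final answer lies in $\fund{\Hh}$.
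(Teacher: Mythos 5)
Your overall skeleton (split to reduce to monomials in first Chern classes, then peel off smooth divisors by induction on dimension, expanding arbitrary line bundles through the formal group law) is the standard route, and since the paper itself offers no proof but cites \cite[Corollary~9.10]{Mer-Ori}, a direct argument is welcome. But there is a genuine gap at the very first reduction: the ``normalisation'' $p_*(\xi^{r-1})=1$ is \emph{not} a consequence of \dref{def:oct}{def:oct:PB}, and it is false in a general oriented cohomology theory. The projective bundle axiom fixes a basis of $\Hh(\Pp(E))$ over $\Hh(X)$; it says nothing about the values of $p_*$ on that basis, and these values are theory-dependent — this is exactly why the paper needs \rref{prop:Quillen}, and why \rref{lemm:projbundle_trivial} has the hypothesis $c_i^{\Hh}(V)=0$ for $i>0$. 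Concretely, take $\Hh=\CHt$ (an oriented cohomology theory by \rref{prop:tilde_is_oct}) and a rank-two bundle $V$ over a smooth projective surface $S$: Quillen's formula \rref{prop:Quillen} with $m=1$ gives $p_*^{\CHt}\big(c_1^{\CHt}(\Oc_{\Pp(V)}(1))\big) = 1 + (3b_2-2b_1^2)\,c_2^{\CH}(V)$, and $3b_2-2b_1^2 = \lc \Pp^1\rc^2 - \lc \Pp^2 \rc$ is a nonzero element of $\Laz$, so the pushforward differs from $1$ whenever $c_2^{\CH}(V)\neq 0$ (e.g.\ $V=\Oc(1)^{\oplus 2}$ on $\Pp^2$). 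Your class $\eta$ with $q_*(\eta\cdot q^*a)=a$ is built on this false identity, so the passage from $\lc\alpha\rc$ to a pushforward of a $c_1$-monomial from the splitting tower collapses. The gap is repairable but nontrivial: the error $p_*(\xi^{r-1})-1$ is a universal polynomial in the Chern classes of $V$ with coefficients in $\fund{\Hh}$, without constant term, hence nilpotent by \rref{p:nilp} and \rref{p:splitting}; one can then run a descending induction on the total codimension of the Chern monomial to absorb the corrections. That correction-term analysis is precisely what your write-up omits.

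Two secondary points. First, your repeated claim that every expansion produces ``only $\Zz$-linear combinations'' is wrong: expanding $c_1(A_l\otimes B_l^{-1})$ through $F_{\Hh}$ and $[-1]_{\Hh}$ introduces the coefficients $a_{i,j}$ and $a_i$ of \eqref{eq:FGL} and \eqref{eq:-1}, which lie in $\fund{\Hh}$, not in $\Zz$. This is harmless — $\fund{\Hh}$ is a subring by \rref{p:Hc}, and the projection formula \dref{def:oct}{def:oct:PF} gives $\lc a\,u\rc = a\lc u\rc$ for $a\in\Hh(\Spec k)$ — but then the terminal classes are $\fund{\Hh}$-multiples of generators, not generators, and your final sentence should say so (the same remark repairs the splitting stage, where the unit inverses also live in $\fund{\Hh}$-coefficients, not $\Zz$). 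Second, on small fields: invoking Poonen's Bertini theorem to get smooth members of $|A^{\otimes m}|$ for large $m$ does work, but your alternative ``base-change to an infinite field together with descent'' is not available as stated: $\Hh$ is only a functor on $\Sm_k$, so there is no theory over an infinite extension to compare with, and for a finite extension $k'/k$ the transfer $\pi_*\pi^*$ is multiplication by $\lc\Spec k'\rc\in\fund{\Hh}^0$, which in a general theory need not be the degree $[k':k]$.
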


\subsection{The formal group law}

\begin{para}
A commutative formal group law is a pair $(R,F)$, where $R$ is a commutative ring and $F\in R[[x,y]]$ a power series satisfying
\begin{enumerate}[(i)]
\item $F(x,y) = F(y,x) \in R[[x,y]]$,
\item $F(x,0)=x \in R[[x]]$,
\item $F(x,F(y,z)) = F(F(x,y),z) \in R[[x,y,z]]$.
\end{enumerate}	
\end{para}

\begin{para}
By \cite[Lemma~1.1.3]{LM-Al-07}, there is a power series 
\begin{equation}
\label{eq:FGL}
F_{\Hh}(x,y)=x+_{\Hh}y = \sum_{i,j} a_{i,j} x^iy^j \in \Hh(\Spec k) [[x,y]]
\end{equation}
with $a_{i,j} \in \Hh^{1-i-j}(\Spec k)$ such that for any line bundles $L,M$ over $X \in \Sm_k$
\[
c_1^{\Hh}(L \otimes M) = c_1^{\Hh}(L) +_{\Hh} c_1^{\Hh}(M) \in \Hh(X),
\]
and the pair $(\Hh(\Spec k),F_{\Hh})$ is a commutative formal group law. The coefficients $a_{i,j}$ actually belong to $\fund{\Hh}$ by \cite[Remark~2.5.8]{LM-Al-07} and \rref{p:obm}. 
\end{para}

\begin{para}
\label{p:additive}
We say that the theory $\Hh$ is \emph{additive} if $F_{\Hh}(x,y)=x+y$. An example of additive theory is $\CH$, see e.g.\ \cite[Proposition~2.5 (e)]{Ful-In-98}.
\end{para}

\begin{para}
\label{p:formal_mult}
Let $[-1]_{\Hh}(x) \in \fund{\Hh}[[x]]$ be the unique power series such that $F_{\Hh}(x, [-1]_{\Hh}(x)) = x$. For $a\in \Zz$, we define a power series $[a]_{\Hh}(x) \in \fund{\Hh}[[x]]$, called \emph{formal multiplication by $a$}, by setting $[0]_{\Hh}(x)=0$, and iteratively $[a]_{\Hh}(x) = F_{\Hh}([a-1]_{\Hh}(x),x)$ for $a>0$, as well as $[a]_{\Hh}(x)=[-1]_{\Hh} ([-a]_{\Hh}(x))$ for $a<0$. The leading term of the power series $[a]_{\Hh}(x)$ is $ax$. In particular, there are elements $a_i \in \fund{\Hh}$ such that
\begin{equation}
\label{eq:-1}
[-1]_{\Hh}(x) = -x + \sum_{i\geq 1} a_i x^{i+1} \in \fund{\Hh}[[x]].
\end{equation}
\end{para}

\subsection{Deformation to the normal bundle}
\begin{lemma}
\label{lemm:deformation}
Let $T \to X$ be a closed immersion in $\Sm_k$, with normal bundle $N$. Denote by $Y$ the blowup of $T$ in $X$; its exceptional divisor is $\Pp(N)$. Write $\zeta = c_1(\Oc_{\Pp(N\oplus 1)}(-1)) \in \Hh(\Pp(N\oplus 1))$ and $\eta = c_1(\Oc_Y(\Pp(N))) \in \Hh(Y)$. Using the convention of \rref{p:frac}, for any $g \in \Hh(\Spec k)[[x]]$, we have in $\Hh(\Spec k)$
\[
\lc g(\eta) \rc = g(0)\lc X \rc + \Big\lc g(\zeta)\frac{[-1]_{\Hh}(\zeta)}{\zeta} \Big\rc.
\]
\end{lemma}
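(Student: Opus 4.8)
The plan is to use the \emph{deformation to the normal bundle}. Set $M$ to be the blowup of $X \times \Pp^1$ along $T \times \{0\}$, a smooth projective $k$-scheme, and let $\rho \colon M \to \Pp^1$ be the projection. The strict transform of $X \times \{0\}$ is the blowup $Y$ of $T$ in $X$, the exceptional divisor is $E := \Pp(N \oplus 1)$ (the normal bundle of $T \times \{0\}$ being $N \oplus 1$), these are smooth divisors in $M$ meeting transversally along $\Pp(N)$, and the fibre $\rho^{-1}(\infty)$ is a copy of $X$ disjoint from $Y \cup E$. Writing $j_Y, j_E, j_\infty, j_{\Pp(N)}$ for the closed immersions into $M$, $[Y] = (j_Y)_*(1)$, $[E] = (j_E)_*(1)$, and $h = c_1(\rho^* \Oc_{\Pp^1}(1))$, the divisor identity $\rho^{-1}(0) = Y + E \sim \rho^{-1}(\infty)$ translates, via \rref{p:c1_Cartier} and the formal group law \eqref{eq:FGL}, into the two relations $h = F_{\Hh}([Y],[E])$ and $h = (j_\infty)_*(1)$ in $\Hh(M)$.

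First I would establish the identity $\lc g([E]) \cdot h \rc = g(0) \lc X \rc$. Since $\rho \circ j_E$ is constant we have $j_E^* h = 0$, hence $[E] \cdot h = 0$ by the projection formula \dref{def:oct}{def:oct:PF}, so that $g([E]) h = g(0) h$; and $\lc h \rc = \lc (j_\infty)_*(1) \rc = \lc X \rc$ because $\rho^{-1}(\infty) \cong X$. Next I would substitute $h = F_{\Hh}([Y],[E])$ and expand $F_{\Hh}(x,y) = x + y + \sum_{i,j \geq 1} a_{i,j}\, x^i y^j$, splitting $g([E]) h$ according to support. The term $g([E])[Y]$ pushes forward off $Y$ to $\lc g(\eta) \rc$, using $j_Y^*[E] = c_1(\Oc_Y(\Pp(N))) = \eta$ and \dref{def:oct}{def:oct:PF}; the term $g([E])[E]$ pushes forward off $E$ to $\lc g(\zeta) \rc_{\Pp(N\oplus1)}$, using $j_E^*[E] = \zeta$; and the mixed terms $a_{i,j}\, g([E])[Y]^i[E]^j$ with $i,j \geq 1$ are supported on $\Pp(N)$.

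To evaluate the mixed terms I would use transverse base change \dref{def:oct}{def:oct:TR} (the square with sides $\Pp(N) \to Y$, $\Pp(N) \to E$, $j_Y$, $j_E$ is transverse by a dimension count) to get $[Y][E] = (j_{\Pp(N)})_*(1)$, whence $[Y]^i[E]^j = (j_{\Pp(N)})_*(\alpha^{i-1}\beta^{j-1})$ for $i,j \geq 1$, where $\alpha = j_{\Pp(N)}^*[Y]$ and $\beta = j_{\Pp(N)}^*[E]$. The crucial input is the identification of the two normal bundles: using $\Oc_{\Pp(N \oplus 1)}(\Pp(N)) \cong \Oc(1)$ one finds $\beta = c_1(\Oc_{\Pp(N)}(-1))$ and $\alpha = c_1(\Oc_{\Pp(N)}(1)) = [-1]_{\Hh}(\beta)$, so that $\alpha$ and $\beta$ are formal inverses for $F_{\Hh}$ and $F_{\Hh}(\alpha,\beta) = 0$. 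The projection formula then collapses the mixed part to $(j_{\Pp(N)})_*\big(g(\beta)\tfrac{F_{\Hh}(\alpha,\beta)-\alpha-\beta}{\alpha\beta}\big) = -(j_{\Pp(N)})_*\big(g(\beta)\tfrac{\alpha+\beta}{\alpha\beta}\big)$. Finally, factoring $j_{\Pp(N)}$ through the inclusion $\iota \colon \Pp(N) \hookrightarrow \Pp(N\oplus1)$ at infinity, with $\iota_*(1) = [\Pp(N)] = c_1(\Oc(1)) = [-1]_{\Hh}(\zeta)$ and $\iota^*\zeta = \beta$, the projection formula rewrites this contribution as $-\lc g(\zeta)\tfrac{[-1]_{\Hh}(\zeta)+\zeta}{\zeta}\rc_{\Pp(N\oplus1)}$. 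Collecting the three contributions into the identity of the previous paragraph and solving for $\lc g(\eta)\rc$, the two bare $\lc g(\zeta)\rc$ terms cancel and leave exactly $g(0)\lc X\rc + \lc g(\zeta)\tfrac{[-1]_{\Hh}(\zeta)}{\zeta}\rc$.

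The main obstacle is the bookkeeping of the formal-group cross-terms together with the normal-bundle identifications that produce $\alpha = [-1]_{\Hh}(\beta)$: it is precisely the relation $F_{\Hh}(\alpha,\beta) = 0$ that triggers the exact cancellation of the spurious $\lc g(\zeta)\rc$. I would also take care that every expression entered as a fraction in the sense of \rref{p:frac} is a genuine power series — this holds because $[-1]_{\Hh}(\zeta)+\zeta$ and $\alpha+\beta$ are divisible by $\zeta^2$, resp.\ $\beta^2$, by \eqref{eq:-1} — and that the cartesian square invoked for \dref{def:oct}{def:oct:TR} really is transverse, which the count $\dim \Pp(N) + \dim M = \dim Y + \dim E$ confirms.
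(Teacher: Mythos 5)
Your argument is correct, and it lives on the same deformation space as the paper's proof --- the blowup $B$ of $X\times\Pp^1$ along $T\times\{0\}$ --- but it executes the key step genuinely differently. The paper simply cites \cite[Proposition~2.5.1 and the proof of Proposition~2.5.2]{LM-Al-07} for the class identity $[Y]=[X]+j_*\big([-1]_{\Hh}(\zeta)/\zeta\big)$ in $\Hh(B)$, then multiplies by $g(\rho)$ with $\rho=c_1(\Oc_B(\Pp(N\oplus1)))$ --- note that $\rho$ is exactly your class $[E]$, so the two proofs multiply by the same element and push forward. What you do instead is reprove the needed relation directly from the axioms of \rref{def:oct}: starting from $h=F_{\Hh}([Y],[E])=[\rho^{-1}(\infty)]$ you expand the formal group law, realise the cross-terms as classes supported on $\Pp(N)$ via the transverse square \dref{def:oct}{def:oct:TR} (your dimension count is right), and collapse them using $\alpha=[-1]_{\Hh}(\beta)$, i.e.\ $F_{\Hh}(\alpha,\beta)=0$, which comes from the normal-bundle identification $\Oc_{\Pp(N\oplus1)}(\Pp(N))\simeq\Oc(1)$; all these identifications ($j_E^*[E]=\zeta$, $j_Y^*[E]=\eta$, $\iota_*(1)=[-1]_{\Hh}(\zeta)$) are consistent with the paper's convention $\Pp(E)=\Proj_X(\Sym\Ec^\vee)$. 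In effect you have unwound Levine--Morel's normal-crossing divisor formula in the special case of two smooth components meeting transversally, so your proof is self-contained relative to the listed axioms (in keeping with the paper's stated elementary spirit), at the price of the cross-term bookkeeping --- which you handle correctly, including the divisibility checks legitimising the fractions of \rref{p:frac} and the neat intermediate identity $\lc g([E])\,h\rc=g(0)\lc X\rc$ that disposes of the fibre at infinity; the final cancellation of the two $\lc g(\zeta)\rc$ terms is exact. One incidental remark: you implicitly use $F_{\Hh}(x,[-1]_{\Hh}(x))=0$, which is indeed what \rref{p:formal_mult} means (its displayed condition ``$=x$'' is a typo, as \eqref{eq:-1} confirms), and the same reading is needed for the paper's own proof.
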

\begin{proof}
Denote by $B$ the blowup of $T\times 0$ in $X \times \Pp^1$, and by $j\colon \Pp(N\oplus 1) \to B$ the immersion of the exceptional divisor. Then $B$ naturally contains $Y$ and $X=X \times 1$ as closed subschemes. By \rref{p:obm} and \cite[Proposition~2.5.1]{LM-Al-07} (and the proof of \cite[Proposition~2.5.2]{LM-Al-07}), we have
\begin{equation}
\label{eq:def}
[Y] = [X] + j_*\frac{[-1]_{\Hh}(\zeta)}{\zeta}  \in \Hh(B).
\end{equation}
Let $\rho = c_1(\Oc_B(\Pp(N\oplus 1))) \in \Hh(B)$. Then $\rho$ restricts to $\zeta$ on $\Pp(N\oplus 1)$, to $\eta$ on $Y$, and to zero on $X$. The statement follows by multiplying \eqref{eq:def} with $g(\rho)$ and projecting to $\Hh(\Spec k)$.
\end{proof}

\subsection{Vishik's formula}
\hfill\\

\vspace*{-1em}
When $\Hh$ is the algebraic cobordism and $k$ has characteristic zero, the next statement is due to Vishik \cite[\S5.4]{Vi-Sym} (he mentions that similar computations were performed earlier independently by Rost and Smirnov). We reproduce Vishik's proof, with minor alterations required when $\carac k=2$.

\begin{proposition}
\label{lemm:Vishik}
Let $f \colon Y \to Z$ be a finite morphism in $\Sm_k$ whose fiber over any generic point of $Z$ is the spectrum of a two-dimensional algebra. Then the $\Oc_Z$-module $\Lc = \coker(\Oc_Z \to f_*\Oc_Y)$ is locally free of rank one, and we have
\[
f_*[Y] = \frac{[2]_{\Hh}(c_1(\Lc^\vee))}{c_1(\Lc^\vee)} \in \Hh(Z).
\]
\end{proposition}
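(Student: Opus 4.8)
The plan is to split the statement into a sheaf-theoretic part (local freeness of $\Lc$) and a cobordism computation, bridged by a realisation of $Y$ as a relative degree-two divisor inside the projective bundle $\Pp(\Ac)$, where $\Ac=f_*\Oc_Y$. For the local freeness I would first argue that $f$ is flat: since $Y$ is smooth, hence Cohen--Macaulay, $Z$ is regular, and $f$ is finite and dominant on each component (its fibre over a generic point of $Z$ is nonempty and zero-dimensional), the dimensions match along $f$ and miracle flatness applies. Thus $\Ac$ is locally free, of rank two by the generic fibre hypothesis. The unit $\Oc_Z\to\Ac$ is fibrewise injective (at each $z$ the fibre $\Ac\otimes k(z)$ is a nonzero two-dimensional $k(z)$-algebra, so its unit is nonzero), hence a subbundle, and $\Lc=\coker(\Oc_Z\to\Ac)$ is locally free of rank one. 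From $0\to\Oc_Z\to\Ac\to\Lc\to0$ I record, using \rref{p:Whitney}, that $c_1(\Ac)=c_1(\Lc)$, that $c_2(\Ac)=0$, and that $\det\Ac=\Lc$.

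Next I would realise $Y$ geometrically. Let $\pi\colon P=\Pp(\Ac)\to Z$, with tautological inclusion $\Oc_P(-1)\hookrightarrow\pi^*\Ac$ and quotient $\mathcal{Q}=\pi^*\Ac/\Oc_P(-1)$; since $\det\Ac=\Lc$ one has $\mathcal{Q}=\pi^*\Lc\otimes\Oc_P(1)$. The squaring map $v\mapsto v\cdot v\bmod\Oc_P(-1)$, built from the multiplication of $\Ac$ and the tautological quotient, is a section $\sigma$ of $\Oc_P(2)\otimes\mathcal{Q}$; the unit $1\in\Ac$ gives a section $s_1$ of $\mathcal{Q}$ cutting out the unit section $S\cong Z$. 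Because $v^2\equiv0$ when $v\in\Oc_P(-1)=\langle1\rangle$, the section $\sigma$ vanishes along $S$, so $\sigma=s_1\cdot\tau$ for a unique section $\tau$ of $\Oc_P(2)$. A fibrewise check (over a $z$ with $\Ac\otimes k(z)=k(z)\times k(z)$) shows $\operatorname{div}\sigma=Y+S$, hence $\operatorname{div}\tau=Y$ and $\Oc_P(Y)=\Oc_P(2)$. Writing $j\colon Y\hookrightarrow P$ and using $f=\pi\circ j$ together with \rref{p:c1_Cartier}, this gives
\[
f_*[Y]=\pi_*j_*(1)=\pi_*\big(c_1(\Oc_P(2))\big)\in\Hh(Z).
\]

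The cobordism computation then proceeds as follows. Put $u=c_1(\Oc_P(-1))$, $\xi=c_1(\Oc_P(1))$ and $w=c_1(\Lc)=c_1(\Ac)$. From the tautological sequence and $c_2(\Ac)=0$ the Whitney formula gives $0=c_2(\pi^*\Ac)=u\cdot c_1(\mathcal{Q})=u(\pi^*w-u)$, i.e. $u^2=\pi^*w\cdot u$, whence $u^{n}=\pi^*(w^{n-1})\,u$ for $n\ge1$. Since $\Oc_P(1)=\Oc_P(-1)^\vee$ gives $\xi=[-1]_\Hh(u)=-u+\sum_{i\ge1}a_iu^{i+1}$ by \rref{p:formal_mult}, reducing via this relation yields $\xi=\pi^*(\lambda)\,u$ with $\lambda=[-1]_\Hh(w)/w\in\Hh(Z)$ a unit (using \rref{p:frac}). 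Using $\pi_*1=0$, $\pi_*\xi=1$ and the projection formula \dref{def:oct}{def:oct:PF}, I get $\pi_*u=\lambda^{-1}$ and hence, for $j\ge1$,
\[
\pi_*(\xi^{j})=\lambda^{j}w^{j-1}\pi_*u=(\lambda w)^{j-1}=\big([-1]_\Hh(w)\big)^{j-1}=c_1(\Lc^\vee)^{j-1}.
\]
Finally $c_1(\Oc_P(2))=[2]_\Hh(\xi)=\sum_{j\ge1}[2]_j\,\xi^{j}$ with $[2]_1=2$, so pushing forward term by term gives $f_*[Y]=\sum_{j\ge1}[2]_j\,c_1(\Lc^\vee)^{j-1}=[2]_\Hh(c_1(\Lc^\vee))/c_1(\Lc^\vee)$, as claimed.

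The step I expect to be the main obstacle is the scheme-theoretic identification $\operatorname{div}\tau=Y$ over the ramified and inseparable fibres, and in particular when $\carac k=2$, where the set-theoretic picture degenerates and one cannot complete the square. I would handle it by a local computation with $\Ac=R[t]/(t^2-at-b)$: the section $\tau$ cuts out the conic $x_0^2+ax_0x_1-bx_1^2$ in $\Pp^1_R$, whose coordinate ring $R[s]/(s^2+as-b)$ is isomorphic to $\Ac$ via $s=-t$, so that $\operatorname{div}\tau$ and $Y$ agree as $Z$-schemes in every characteristic. Everything downstream of this identification is characteristic-free, which is exactly what makes the formula hold when $\carac k=2$.
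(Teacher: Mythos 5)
Your construction is a genuinely different route from the paper's. The paper embeds $Y$ into $\Pp(\Ac^\vee)$ as the closed subscheme cut out by the homogeneous ideal generated by the elements $a\otimes b - 1\otimes ab \in \Sym^2\Ac$, identifies its line bundle as $p^*\Lc^{\otimes -2}(2)$, and then evaluates $[2]_{\Hh}$ on the class of the unit section $\Pp(\Oc_Z)$, pushing forward along that section by the projection formula; you instead work in the dual bundle $\Pp(\Ac)$, realise $Y$ as $\operatorname{div}\tau$ with $\Oc_P(Y)\simeq\Oc_P(2)$ via the quadratic section $\sigma$ divided by the unit section, and compute $\pi_*(\xi^j)$ by hand from the relation $u^2=\pi^*w\cdot u$. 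Your sheaf-theoretic part is fine (and nicely different from the paper's faithfully-flat-descent argument), and your identification of $\operatorname{div}\tau$ with $Y$ is correct and characteristic-free: the local computation with $\Ac=R[t]/(t^2-at-b)$ and $s=-t$ is exactly what is needed; to make the local isomorphisms glue canonically, note that the kernel of the counit $f^*\Ac\twoheadrightarrow\Oc_Y$ is a sub-line-bundle of $f^*\Ac$, giving a global $Z$-morphism $Y\to\Pp(\Ac)$ whose image your computation identifies with $\operatorname{div}\tau$.

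The genuine gap is the input ``$\pi_*1=0$, $\pi_*\xi=1$''. These are facts about $\CH$, not about a general oriented cohomology theory: here $\pi_*$ is part of the data, not determined by the ring structure, and $\pi_*(1)$ is the bordism class of the fibre --- for a trivial bundle it equals $\lc\Pp^1\rc$, which is nonzero in $\Hh=\CHt$ (cf.\ \rref{lemm:projbundle_trivial} with $m=0$). Fortunately $\pi_*1=0$ is never actually used in your chain (the series $[2]_{\Hh}$ has no constant term), but $\pi_*\xi=1$ carries the whole computation and is not formal; it is precisely the point where a proof in this generality must do work. It is true, and is proved by the mechanism you omitted and the paper performs: reduce to $Z$ affine by Jouanolou's trick (legitimate in view of \dref{def:oct}{def:oct:TR} and \dref{def:oct}{def:oct:HI}), split $0\to\Oc_Z\to\Ac\to\Lc\to 0$, and observe that the resulting sub-line-bundle $\Lc\subset\Ac$ defines a section $j'\colon Z\simeq\Pp(\Lc)\hookrightarrow\Pp(\Ac)$ which is the vanishing divisor of the composite $\Oc_P(-1)\to\pi^*(\Ac/\Lc)\simeq\Oc_P$, a section of $\Oc_P(1)$; hence $[\Pp(\Lc)]=\xi$ by \rref{p:c1_Cartier} and $\pi_*\xi=1$. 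In fact, once you have this section your residue bookkeeping with $\lambda$ and $w$ becomes unnecessary: since $\Oc_P(-1)|_{\Pp(\Lc)}\simeq\Lc$, the projection formula \dref{def:oct}{def:oct:PF} gives directly
\[
\pi_*(\xi^j)=\pi_*\bigl(j'_*(1)\cdot\xi^{j-1}\bigr)=(\pi\circ j')_*\bigl(c_1(\Lc^\vee)^{j-1}\bigr)=c_1(\Lc^\vee)^{j-1}\quad\text{for }j\geq 1,
\]
and the conclusion follows from your last display --- this is exactly the paper's projection-formula trick, transported to $\Pp(\Ac)$.
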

\begin{proof}
The $\Oc_Z$-module $\Ac=f_*\Oc_Y$ is locally free of rank two, see e.g.\ \cite[\S 4, n$^{\circ}$ 5, cor. de la prop. 8]{Bou-AC-10}. The morphism of $\Ac$-modules $1\otimes \id \colon \Ac \to \Ac \otimes_{\Oc_Z} \Ac$ admits a retraction (the multiplication map of the $\Oc_Z$-algebra $\Ac$), and it follows that its cokernel $\Lc \otimes_{\Oc_Z} \Ac$ is a locally free $\Ac$-module of rank one. By faithful flatness of the $\Oc_Z$-algebra $\Ac$, the $\Oc_Z$-module $\Lc$ is locally free of rank one.

To prove the remaining statement, we may assume that $Z$ is affine by Jouanolou's trick \cite[Lemme~1.5]{Jouanolou} (in view of \dref{def:oct}{def:oct:TR} and \dref{def:oct}{def:oct:HI}). Let $\Sc^\bullet$ be the symmetric algebra on the $\Oc_Z$-module $\Ac$. Consider the morphisms of $\Oc_Z$-modules $\nu \colon \Ac \to \Sc^2$ and $\mu \colon \Sc^2 \to \Sc^2$ given by $\nu(a)= 1\otimes a$ and $\mu(a \otimes b)= a\otimes b - 1 \otimes ab$. Then $\ker \mu = \im \nu$, and $\coker \nu = \Lc^{\otimes 2}$. This gives an inclusion $\Lc^{\otimes 2}=\im \mu\subset \Sc^2$. The induced morphism of $\Nn$-graded $\Oc_{\Sc^\bullet}$-modules $\Lc^{\otimes 2} \otimes_{\Oc_Z} \Sc^{\bullet -2} \to \Sc^\bullet$ is injective, because locally the $\Oc_Z$-module $\Lc^{\otimes 2}$ is freely generated by a nonzero element of $\Sc^2$ and $\Sc^\bullet$ is an integral domain. Its image is the homogeneous ideal $\Ic \subset \Sc^\bullet$ generated by $\im \mu$. The morphism of $\Oc_Z$-modules $\Ac \to \Ac[t]$ given by $a \mapsto at$ induces a morphism of $\Nn$-graded $\Oc_Z$-algebras $\Sc^\bullet \to \Ac[t]$  whose kernel is $\Ic$, and which is surjective in degrees $\geq 1$. Thus the closed subscheme of $\Pp(\Ac^\vee)=\Proj_Z \Sc^\bullet$ defined by the homogeneous ideal $\Ic$ of $\Sc^\bullet$ is isomorphic to $\Proj_Z \Ac[t] \simeq \Spec_Z \Ac \simeq Y$ as a $Z$-scheme. We have realised the $Z$-scheme $Y$ as a Cartier divisor in $\Pp(\Ac^\vee)$ whose line bundle is $p^*\Lc^{\otimes -2}(2)$, where $p\colon \Pp(\Ac^\vee)\to Z$ is the projective bundle.

The sequence of $\Oc_Z$-modules $0 \to \Oc_Z \to \Ac \to \Lc \to 0$ splits, because $Z$ is affine. The corresponding inclusion $\Oc_Z \subset \Ac^\vee$ defines an effective Cartier divisor $j \colon \Pp(\Oc_Z) \to \Pp(\Ac^\vee)$ whose line bundle is $p^*\Lc^\vee(1)$. Since $\Oc_{\Pp(\Ac^\vee)}(1)$ has trivial restriction on $\Pp(\Oc_Z)$, in view of \rref{p:c1_Cartier} and \dref{def:oct}{def:oct:PF} we have in $\Hh(\Pp(\Ac^\vee))$
\[
[Y] = c_1(p^*\Lc^{\otimes -2}(2))=[2]_{\Hh}(c_1(p^*\Lc^\vee(1))) = j_*\Big(\frac{[2]_{\Hh}(c_1(j^*p^*\Lc^\vee))}{c_1(j^*p^*\Lc^\vee)} \Big).
\]
Since $p\circ j$ is an isomorphism, we conclude by applying $p_*$ and using \dref{def:oct}{def:oct:PF}.
\end{proof}

\section{The universal twisting}
\label{sect:cobordism}

\subsection{Twisting a theory}
\hfill\\

\vspace*{-1em}
In this section $\Hh$ is an oriented cohomology theory.

\begin{para}
\label{p:bb}
When $R$ is a $\Zz$-graded ring, we denote by $R[\bb]$ the polynomial ring over $R$ in the variables $b_i$ for $i \in \Nn -\{0\}$. The ring $R[\bb]$ is $\Zz$-graded by letting $b_i$ have degree $-i$. If $f\colon R \to S$ is a group morphism between $\Zz$-graded rings, we will again denote by $f$ the induced group morphism $R[\bb] \to S[\bb]$.
\end{para}

\begin{para}
\label{p:fund_pol}
Consider the power series (where $b_0 =1$)
\[
\pi(x) = \sum_{i \in \Nn} b_ix^i \in \Zz[\bb][[x]].
\]
If $L$ is a line bundle over $X \in \Sm_k$, then $\pi(c_1(L)) \in \Hh(X)[\bb]^{\times}$ by \rref{p:nilp}. It follows from the splitting principle \rref{p:splitting} that there is a unique way to define for every $X\in \Sm_k$ a map $P^{\Hh} \colon K_0(X) \to \Hh(X)[\bb]$ satisfying
\begin{enumerate}[(i)]
\item $f^*P^{\Hh}(E) = P^{\Hh}(f^*E)$ for any morphism $f\colon Y \to X$ in $\Sm_k$ and $E\in K_0(X)$,

\item $P^{\Hh}(L) = \pi(c_1(L))$ when $L$ is a line bundle over $X \in \Sm_k$,
\item $P^{\Hh}(E + F) = P^{\Hh}(E) P^{\Hh}(F)$ for any $X\in \Sm_k$ and $E,F \in K_0(X)$.
\end{enumerate}
\end{para}

\begin{para}
A sequence of integers $\alpha=(\alpha_1,\cdots,\alpha_m)$ with $m \in \Nn$ is called a \emph{partition} if $\alpha_1 \geq \alpha_2 \geq \cdots \geq \alpha_m>0$. We will write $|\alpha| = \alpha_1 + \cdots + \alpha_m \in \Nn$. To the partition $\alpha$ corresponds the monomial $b_\alpha = b_{\alpha_1} \cdots b_{\alpha_m} \in \Zz[\bb]$.
\end{para}

\begin{para}
\label{p:def_CF}
Let $X\in \Sm_k$ and $E\in K_0(X)$. Observe that $P^{\Hh}(E)$ has degree zero in the $\Zz$-graded ring $\Hh(X)[\bb]$. We define the \emph{Conner--Floyd Chern class} $c_\alpha^{\Hh}(E) \in \Hh^{|\alpha|}(X)$ (or simply $c_\alpha(E)$) for each partition $\alpha$ by the formula
\[
P^{\Hh}(E) = \sum_\alpha c^{\Hh}_\alpha(E) b_\alpha \in \Hh(X)[\bb].
\]
\end{para}

\begin{para}
\label{p:Chern_number}
When $X$ is a smooth projective $k$-scheme and $\alpha$ a partition, the corresponding \emph{Chern number} is
\[
c_\alpha(X) = \deg \big(c_\alpha^{\CH}(-\Tan_X)\big) \in \Zz.
\]
\end{para}

\begin{para}
\label{p:c_alpha_c_i}
When $\alpha$ is the partition $(1,\cdots,1)$ with $|\alpha|=n$, we have $b_\alpha = b_1^n$ and $c_\alpha(E) = c_n(E)$ for any $X \in \Sm_k$ and $E \in K_0(X)$.
\end{para}

\begin{para}
\label{p:CF-Q}
Let $\alpha=(\alpha_1,\cdots,\alpha_m)$ and let $n\geq m$. The $n$-th symmetric group acts on the ring $\Zz[x_1,\cdots,x_n]$ by permuting the variables. The sum of the elements in the orbit of $x_1^{\alpha_1} \cdots x_m^{\alpha_m}$ may be written as a polynomial $Q_\alpha$ in the elementary symmetric functions $\sigma_1,\cdots,\sigma_m$, which does not depend on the choice of $n$. 
\end{para}

\begin{para}
\label{p:Q-basis}
Any homogeneous polynomial of degree $d$ in $\Zz[y_1,\cdots,y_n]$, where $y_i$ has degree $i$, is a $\Zz$-linear combination of the polynomials $Q_\alpha(y_1,\cdots,y_n)$ for $|\alpha| =d$. 
\end{para}

\begin{lemma}
\label{lemm:CF-Q}
Let $\alpha$ be a partition. For any $X \in \Sm_k$ and $E \in K_0(X)$, we have
\[
c_\alpha(E) = Q_\alpha(c_1(E),c_2(E),\cdots) \in \Hh(X).
\]
\end{lemma}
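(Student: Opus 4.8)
The plan is to identify both sides of the asserted equality as the coefficient of $b_\alpha$ in a single multiplicative characteristic class $K_0(X)\to\Hh(X)[\bb]$, and to recognise that class as $P^{\Hh}$ via the uniqueness built into \rref{p:fund_pol}. Concretely, for $E\in K_0(X)$ set
\[
\tilde P(E)=\sum_\alpha Q_\alpha\big(c_1(E),c_2(E),\dots\big)\,b_\alpha\in\Hh(X)[\bb],
\]
a finite sum in each degree by \rref{p:nilp} and \rref{p:Chern_virtual}. If I can show that $\tilde P$ satisfies the three defining properties of \rref{p:fund_pol}, then $\tilde P=P^{\Hh}$, and comparing the coefficients of the monomials $b_\alpha$ (which are $\Zz$-linearly independent over $\Hh(X)$) gives exactly $c_\alpha(E)=Q_\alpha(c_1(E),c_2(E),\dots)$.

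Property (i), functoriality, is immediate from the functoriality of the Chern classes $c_i$. For property (ii), let $L$ be a line bundle; then $c_i(L)=0$ for $i\geq 2$, so $Q_\alpha(c_1(L),0,0,\dots)$ is the value at a single nonzero variable $c_1(L)$ of the orbit sum defining $Q_\alpha$ in \rref{p:CF-Q}. This vanishes unless $\alpha$ has length one, say $\alpha=(j)$, in which case it equals $c_1(L)^j$; hence $\tilde P(L)=\sum_{j\geq 0}b_j\,c_1(L)^j=\pi(c_1(L))$, as required.

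Property (iii), the multiplicativity $\tilde P(E+F)=\tilde P(E)\tilde P(F)$, is the crux and the step I expect to be the main obstacle. I would prove it as a universal identity. Applying \rref{p:splitting} to the positive and negative parts of $E$ and of $F$ and using the injectivity of the resulting pullback, it is enough to treat virtual sums of line bundles; writing $x_i=c_1(L_i)$ for the Chern roots, everything rests on the formal power series identity
\[
\sum_\alpha Q_\alpha(e_1,e_2,\dots)\,b_\alpha=\prod_i\pi(x_i)\in\Zz[\bb][[x_1,x_2,\dots]],
\]
where $e_j$ denotes the $j$-th elementary symmetric function of the $x_i$. This holds because, expanding $\prod_i\sum_k b_k x_i^k$, the coefficient of $b_\alpha=b_{\alpha_1}\cdots b_{\alpha_m}$ is precisely the sum of the monomials whose multiset of nonzero exponents is $\{\alpha_1,\dots,\alpha_m\}$, which is the orbit sum equal to $Q_\alpha(e_1,\dots,e_m)$ by \rref{p:CF-Q}. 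Granting it, merging two root sets multiplies the products $\prod_i\pi(x_i)$, so $\tilde P$ is multiplicative on honest sums of line bundles.

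The genuine difficulty is the passage to virtual classes: the coefficient of $b_\alpha$ in $P^{\Hh}(E)$ is the clean monomial symmetric function only for an actual bundle, whereas for a virtual class the generating series acquires inverse factors $\pi(c_1(M_j))^{-1}$, which I do not want to expand directly. I avoid this by noting that, in each fixed degree, the relation $\tilde P(E+F)=\tilde P(E)\tilde P(F)$ is a polynomial identity in finitely many of the Chern classes of $E$ and $F$; it has just been verified whenever these Chern classes are the elementary symmetric functions of independent variables, and since those are algebraically independent the identity holds universally, hence for arbitrary $E,F\in K_0(X)$. The inverses demanded by the negative parts are then furnished formally by $\tilde P(0)=1$. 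With (i)--(iii) in hand, the uniqueness in \rref{p:fund_pol} finishes the proof.
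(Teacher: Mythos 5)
Your proof is correct, but it handles the decisive step---passing from vector bundles to arbitrary virtual classes---quite differently from the paper. For honest bundles the two arguments coincide: your identification of the $b_\alpha$-coefficient of $\prod_i\pi(x_i)$ with the orbit sum $Q_\alpha(e_1,e_2,\dots)$ is exactly what the paper means by ``this follows from the construction \rref{p:fund_pol} when $E$ is a vector bundle''. For general $E\in K_0(X)$, however, the paper argues geometrically: by Jouanolou's trick (justified via \dref{def:oct}{def:oct:TR} and \dref{def:oct}{def:oct:HI}) one may assume $X$ affine, so that $E+s\in K_0(X)$ is the class of a vector bundle for some integer $s$; since both $c_\alpha$ and the classes $c_i$ are unchanged by adding a trivial summand (as $\pi(0)=1$), the bundle case finishes the proof in one line. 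You instead characterize $P^{\Hh}$ by the three axioms of \rref{p:fund_pol}, verify them for your candidate $\tilde P$, and prove multiplicativity as a universal polynomial identity: in each degree it involves only finitely many of the classes $c_i(E),c_j(F)$, it holds after specializing these to elementary symmetric functions of sufficiently many independent variables, and such specializations are jointly algebraically independent, so the identity holds formally; combined with the Whitney formula for virtual classes furnished by \rref{p:Chern_virtual}, this yields multiplicativity on all of $K_0(X)$, and the uniqueness in \rref{p:fund_pol} concludes. Your route is longer but purely formal---it never leaves the splitting principle and makes no use of homotopy invariance or transversality---whereas the paper's reduction is shorter but consumes those geometric axioms through Jouanolou's trick. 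One cosmetic point: your closing sentence about inverses being ``furnished formally by $\tilde P(0)=1$'' is superfluous---once the universal identity gives $\tilde P(E+F)=\tilde P(E)\tilde P(F)$ for all $E,F\in K_0(X)$, negative parts require no separate treatment.
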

\begin{proof}
This follows from the construction \rref{p:fund_pol} when $E$ is a vector bundle. In general, we may assume that $X$ is affine by Jouanolou's trick \cite[Lemme~1.5]{Jouanolou} (in view of \dref{def:oct}{def:oct:TR} and \dref{def:oct}{def:oct:HI}). Then there is an integer $s$ such that $E +s \in K_0(X)$ is the class of a vector bundle, and
\[
c_\alpha(E) = c_\alpha(E+s) =  Q_\alpha(c_1(E+s),c_2(E+s),\cdots) =  Q_\alpha(c_1(E),c_2(E),\cdots).\qedhere
\]
\end{proof}

\begin{lemma}
\label{lemm:c_alpha_inv}
Let $\alpha$ be a partition. Then there are elements $\lambda_{\alpha,\beta} \in \Zz$ for all partitions $\beta$ with $|\beta|=|\alpha|$, such that for any $X\in \Sm_k$ and $E\in K_0(X)$ we have
\[
c_\alpha(E) = \sum_{|\beta|=|\alpha|}  \lambda_{\alpha,\beta} c_\beta(-E) \in \Hh(X).
\]
\end{lemma}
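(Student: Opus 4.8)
The plan is to reduce the claimed identity to a universal statement about symmetric functions, where it becomes the assertion that a certain sign-twisted version of the classical involution exchanging elementary and complete symmetric functions is defined over $\Zz$. The first step is to observe that both sides are obtained by substituting the Chern classes $c_i(E)$ into fixed polynomials with integer coefficients, independent of $\Hh$, $X$ and $E$: by \rref{lemm:CF-Q} we have $c_\alpha(E) = Q_\alpha(c_1(E),c_2(E),\dots)$, while the identity $c(-E)=c(E)^{-1}$ of \rref{p:Chern_virtual} expresses each $c_i(-E)$ as a universal integer polynomial $R_i$ in the $c_j(E)$, whence $c_\beta(-E)=Q_\beta(R_1,R_2,\dots)$ is again of this form. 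It therefore suffices to produce integers $\lambda_{\alpha,\beta}$ for which the corresponding identity holds in the graded polynomial ring $\Zz[y_1,y_2,\dots]$ with $\deg y_i = i$; specialising $y_i \mapsto c_i(E)$ then yields the lemma in full generality.

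Next I would identify $\Zz[y_1,y_2,\dots]$ with the ring $\Lambda$ of symmetric functions in variables $x_1,x_2,\dots$ through $y_i \mapsto e_i$, the $i$-th elementary symmetric function. Under this isomorphism the definition of $Q_\alpha$ in \rref{p:CF-Q} shows that $c_\alpha(E)$ becomes the monomial symmetric function $m_\alpha$; this is consistent with the fact that, when $E$ splits with Chern roots $x_i$, one has $P^{\Hh}(E)=\prod_i \pi(x_i)$ by \rref{p:fund_pol} and \rref{p:splitting}, and the coefficient of $b_\alpha$ in this product is precisely $m_\alpha$. Inverting the total Chern class sends $c_i(-E)$ to $(-1)^i h_i$, where $h_i$ is the $i$-th complete homogeneous symmetric function, so $c_\beta(-E)$ becomes $\Phi(m_\beta)$, where $\Phi\colon \Lambda \to \Lambda$ is the graded ring endomorphism determined by $\Phi(e_i)=(-1)^i h_i$.

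Finally I would argue that $\Phi$ is a graded $\Zz$-algebra automorphism: it is the composite of the variable change $x_i \mapsto -x_i$ with the classical involution interchanging $e_i$ and $h_i$, and using the Newton-type relation $\sum_{j=0}^{i}(-1)^{i-j}e_j h_{i-j}=0$ one checks by induction that $\Phi(h_i)=(-1)^i e_i$, so that $\Phi^2 = \id$. Being graded, $\Phi$ restricts to a $\Zz$-linear automorphism of the degree-$d$ component $\Lambda^d$ for $d=|\alpha|$, which is free with basis $\{m_\beta : |\beta|=d\}$ (spanning is \rref{p:Q-basis}). Hence $\Phi$ carries this basis to the basis $\{\Phi(m_\beta)\}=\{c_\beta(-E)\}$, and writing $m_\alpha = c_\alpha(E)$ in the latter basis produces the desired integers $\lambda_{\alpha,\beta}$, which depend only on the combinatorics of $\Lambda^d$.

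The one genuinely delicate point is integrality: a priori the transition between the two families could involve denominators, and the whole content of the lemma is that it does not. This is exactly what the observation $\Phi^2=\id$ guarantees, since it forces the transition matrix between $\{m_\beta\}$ and $\{\Phi(m_\beta)\}$ to be invertible over $\Zz$ rather than merely over $\Qq$. I would expect the bookkeeping in the reduction step (checking that the $R_i$ and $Q_\beta$ compose to genuinely universal integer polynomials) and the verification $\Phi(h_i)=(-1)^ie_i$ to be routine, so that the substance of the proof lies entirely in recognising the sign-twisted involution.
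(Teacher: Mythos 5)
Your proof is correct, but it takes a genuinely different route from the paper's. The paper argues by induction on $|\alpha|$: extracting the $b_\alpha$-coefficient of the identity $P^{\Hh}(E)P^{\Hh}(-E)=1$ gives $-c_\alpha(E)=\sum c_\gamma(E)c_\delta(-E)$, the sum running over factorisations $b_\gamma b_\delta=b_\alpha$ with $\gamma\neq\alpha$; the induction hypothesis rewrites each $c_\gamma(E)$ in terms of classes of $-E$, and then \rref{p:Q-basis} together with \rref{lemm:CF-Q} re-expands the resulting products $c_\varepsilon(-E)c_\delta(-E)$ as integral combinations of the $c_\beta(-E)$. That argument never leaves $\Hh(X)$, needs only the spanning half of \rref{p:Q-basis}, and produces the $\lambda_{\alpha,\beta}$ recursively without ever exhibiting them. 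You instead reduce to a universal identity in the ring $\Lambda$ of symmetric functions --- a legitimate reduction, since by \rref{lemm:CF-Q} applied to both $E$ and $-E$, together with \rref{p:Chern_virtual}, both sides are specialisations of fixed integer polynomials in the $c_i(E)$ --- then identify $c_\alpha(E)$ with $m_\alpha$ (immediate from the definition of $Q_\alpha$ in \rref{p:CF-Q}) and $c_\beta(-E)$ with $\Phi(m_\beta)$ for the sign-twisted involution $\Phi=\omega\circ(x_i\mapsto -x_i)$, and extract integrality from $\Phi^2=\id$, which makes the transition matrix in the monomial basis self-inverse over $\Zz$. The individual steps all check out: your inductive verification of $\Phi(h_i)=(-1)^ie_i$ from the relation $\sum_{j}(-1)^je_jh_{i-j}=0$ is correct, and the $m_\beta$ with $|\beta|=d$ do form a $\Zz$-basis of the degree-$d$ component (spanning is \rref{p:Q-basis}; independence follows since their number equals the rank of that component). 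Your approach buys more structural information: it shows the matrix $(\lambda_{\alpha,\beta})$ is an involution, so the same coefficients also satisfy $c_\alpha(-E)=\sum_\beta\lambda_{\alpha,\beta}c_\beta(E)$, and it identifies $\lambda$ explicitly as the matrix of the twisted $\omega$-involution in the monomial basis; the paper's induction buys brevity and self-containedness, staying entirely within the machinery already set up and avoiding the symmetric-function dictionary.
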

\begin{proof}
We proceed by induction on $|\alpha|$, the case $\alpha=\varnothing$ being clear. From the relation $P^{\Hh}(E) P^{\Hh}(-E) =1$ we deduce, using the induction hypothesis 
\[
-c_\alpha(E) = \sum_{\substack{b_\gamma b_\delta= b_\alpha\\ \gamma \neq \alpha}} c_{\gamma}(E) c_{\delta}(-E) = \sum_{\substack{b_\gamma b_\delta= b_\alpha\\ \gamma \neq \alpha}} \sum_{|\varepsilon| = |\gamma|} \lambda_{\gamma,\varepsilon} c_{\varepsilon}(-E) c_{\delta}(-E).
\]
It follows from \rref{p:Q-basis} that $Q_{\varepsilon} Q_{\delta}$ is a $\Zz$-linear combination of the polynomials $Q_{\beta}$, for $|\beta|=|\varepsilon| + |\delta|$. Thus the statement follows from \rref{lemm:CF-Q}.
\end{proof}

\begin{para}
For $X \in \Sm_k$ we set $\Ht(X) = \Hh(X)[\bb]$ and for a morphism $f\colon Y \to X$ in $\Sm_k$ we set $f^*_{\Ht} = f^*_{\Hh}$ (we use the notation of \rref{p:bb}). If $f$ is projective with virtual tangent bundle $\Tan_f \in K_0(Y)$, for any $a\in \Ht(Y)$ we set
\[
f_*^{\Ht}(a)= f_*^{\Hh}(P^{\Hh}(-\Tan_f)a) \in \Ht(X).
\]
\end{para}

\begin{proposition}
\label{prop:tilde_is_oct}
The functor $\Ht$, together with the above defined pushforwards, is an oriented cohomology theory.
\end{proposition}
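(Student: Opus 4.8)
The plan is to verify the axioms \dref{def:oct}{def:oct:1}--\dref{def:oct}{def:oct:HI} in turn, observing that all but the projective bundle formula reduce formally to the corresponding axiom for $\Hh$, on the strength of two inputs: the multiplicativity and naturality of $P^{\Hh}$ recorded in \rref{p:fund_pol}, and the standard behaviour of virtual tangent bundles. For the latter, if $f\colon Y \to X$ and $g \colon Z \to Y$ are projective morphisms in $\Sm_k$ then $\Tan_{f\circ g} = \Tan_g + g^*\Tan_f$ in $K_0(Z)$, while for a transverse cartesian square as in \eqref{eq:transverse} with $f$ (hence $h$) projective we have $\Tan_h = e^*\Tan_f$ in $K_0(W)$; both follow from the definition $\Tan_f = \Tan_Y - f^*\Tan_X$ and the vanishing of the excess bundle in a transverse square.

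Granting these, the verifications run as follows. Since $P^{\Hh}(-\Tan_f)$ is homogeneous of degree zero, $f_*^{\Ht}$ has the same degree as $f_*^{\Hh}$, giving \dref{def:oct}{def:oct:1}; the disjoint union axiom and the homotopy invariance axiom \dref{def:oct}{def:oct:HI} follow by tensoring the corresponding bijections for $\Hh$ with $\Zz[\bb]$, and the projection formula \dref{def:oct}{def:oct:PF} is immediate since $f^*_{\Ht} = f^*_{\Hh}$. For functoriality of pushforward, I would compute $f_*^{\Ht}\circ g_*^{\Ht}$ by moving $P^{\Hh}(-\Tan_f)$ inside $g_*^{\Hh}$ via the projection formula for $\Hh$, turning it into $g^*P^{\Hh}(-\Tan_f) = P^{\Hh}(-g^*\Tan_f)$, and then applying multiplicativity $P^{\Hh}(-g^*\Tan_f)P^{\Hh}(-\Tan_g) = P^{\Hh}(-\Tan_{f\circ g})$; the identity $(\id_X)_*^{\Ht} = \id$ holds because $\Tan_{\id} = 0$. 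The transverse base change axiom \dref{def:oct}{def:oct:TR} is handled the same way, with the relation $\Tan_h = e^*\Tan_f$ and the naturality $e^*P^{\Hh}(-\Tan_f) = P^{\Hh}(-e^*\Tan_f)$ in place of multiplicativity.

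The substantive point is the projective bundle axiom \dref{def:oct}{def:oct:PB}. Writing $L = \Oc_{\Pp(E)}(1)$ and $\xi = c_1^{\Hh}(L)$, the zero-section $s$ has virtual tangent bundle $\Tan_s = -L$, so $P^{\Hh}(-\Tan_s) = \pi(\xi)$ by \rref{p:fund_pol}, and the class attached to $\Ht$ is $\xi' = s^*_{\Ht}s_*^{\Ht}(1) = s^*_{\Hh}s_*^{\Hh}(\pi(\xi))$. Using that $s_{\Hh}^*$ is inverse to the pullback along the projection $L \to \Pp(E)$ (by \dref{def:oct}{def:oct:HI}), one obtains the self-intersection identity $s^*_{\Hh}s_*^{\Hh}(u) = \xi\, u$, whence $\xi' = \xi\,\pi(\xi)$, a power series in $\xi$ with coefficients in $\Zz[\bb]$ and leading term $\xi$. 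It then suffices to show that $1,\xi',\dots,(\xi')^{r-1}$ is an $\Ht(X)$-basis of $\Ht(\Pp(E)) = \Hh(\Pp(E))[\bb]$, knowing by \dref{def:oct}{def:oct:PB} for $\Hh$ that $1,\xi,\dots,\xi^{r-1}$ is one.

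I expect this basis change to be the main obstacle, and I would treat it as follows. Each $(\xi')^j = \xi^j\pi(\xi)^j$ is, before reduction, a power series in $\xi$ of the shape $\xi^j + (\text{higher powers of }\xi)$; reducing the powers $\xi^m$ with $m \geq r$ through the defining relation of \rref{p:Chern} expresses $(\xi')^j$ in the basis $1,\dots,\xi^{r-1}$. The resulting transition matrix splits as $U + R$, where $U$ is upper unitriangular (its entries are the unreduced coefficients, with $1$'s on the diagonal) and every entry of $R$ carries a Chern class of $E$ as a factor. Since the Chern classes $c_i(E)$ are nilpotent in $\Hh(\Pp(E))$ — which follows from \rref{p:nilp} together with the splitting principle \rref{p:splitting} — the entries of $R$ lie in the nilradical, so $\det(U+R) \in 1 + (\text{nilpotent})$ is a unit. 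Hence the transition matrix is invertible and $1,\xi',\dots,(\xi')^{r-1}$ is a basis, which completes the verification of all the axioms.
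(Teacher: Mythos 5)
Your proof is correct and takes essentially the same route as the paper, whose proof of \rref{prop:tilde_is_oct} simply delegates to \cite[\S 7.4.2]{LM-Al-07} and \cite[Proposition~4.3]{Mer-Ori}: the argument there is exactly yours, twisting the pushforward by the degree-zero class $P^{\Hh}(-\Tan_f)$ and reducing each axiom to the corresponding one for $\Hh$ via the multiplicativity and naturality of $P^{\Hh}$ together with the identities $\Tan_{f\circ g}=\Tan_g+g^*\Tan_f$ and $\Tan_h=e^*\Tan_f$. Your treatment of the one substantive point, the projective bundle axiom, is also sound and matches the standard argument: the new class is $\exp(\xi)$, and the transition matrix to the basis $1,\xi,\dots,\xi^{r-1}$ is invertible because it is unitriangular modulo the nilpotent ideal generated by the Chern classes of $E$ (nilpotent by \rref{p:nilp} and \rref{p:splitting}), so its determinant is a unit.
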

\begin{proof}
See \cite[\S 7.4.2]{LM-Al-07} or \cite[Proposition~4.3]{Mer-Ori}.
\end{proof}

\begin{para}
\label{p:def_exp}
We define the power series (where $b_0=1$)
\[
\exp(x) = x\pi(x) = \sum_{i\in \Nn} b_i x^{i+1} \in \Zz[\bb][[x]].
\]
\end{para}

\begin{para}
\label{p:c1_tilde}
If $L$ is a line bundle over $X \in \Sm_k$, then $c_1^{\Ht}(L) = \exp(c_1^{\Hh}(L)) \in \Ht(X)$. This follows from \rref{p:c1} and \dref{def:oct}{def:oct:PF} (see \cite[Lemma~4.2]{Mer-Ori}).
\end{para}

\begin{para}
\label{p:FGL_exp}
Denoting by $\exp^{-1}$ the composition inverse of $\exp$, we have in $\Ht(\Spec k)[[x,y]]$
\[
x+_{\Ht} y = \exp(\exp^{-1}(x) +_{\Hh} \exp^{-1}(y)).
\]
This follows from \rref{p:c1_tilde} (see \cite[Lemma~8.1]{Mer-Ori})
\end{para}

\subsection{The cobordism ring}

\begin{para}
We will denote by $\Laz$ the subring $\fund{\CHt} \subset \CHt(\Spec k) = \Zz[\bb]$ defined in \rref{p:Hc}. When $p$ is a prime number, we will write $\Ch = \CH/p$ and denote by $\Lp$ the subring $\fund{\Cht}\subset \Cht(\Spec k) = \Fp[\bb]$.
\end{para}

\begin{para}
\label{p:Chern_Laz}
Let $X$ be a smooth projective $k$-scheme. Then, using the notation of \rref{p:Chern_number}
\[
\lc X \rc = \sum_{\alpha} c_\alpha(X) b_\alpha \in \Laz.
\]
\end{para}

\begin{theorem}[{\cite[Theorem~8.2]{Mer-Ori}}]
\label{th:Lazard}
The pair $(\Laz,F_{\CHt})$ is the universal commutative formal group law.
\end{theorem}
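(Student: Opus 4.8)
The plan is to exhibit the classifying homomorphism of the universal formal group law and to prove it is an isomorphism onto $\Laz$. Writing $(L,F_L)$ for the universal commutative formal group law, the pair $(\Zz[\bb],F_{\CHt})$ is classified by a unique graded ring morphism $\Phi\colon L\to\CHt(\Spec k)=\Zz[\bb]$ carrying $F_L$ to $F_{\CHt}$. Since the coefficients $a_{i,j}$ of $F_{\CHt}$ lie in $\fund{\CHt}=\Laz$ (as noted below \rref{eq:FGL}), the morphism $\Phi$ factors through $\Laz$, and it remains to prove that $\Phi\colon L\to\Laz$ is bijective. The first thing I would record is that, combining \rref{p:FGL_exp} with the additivity of $\CH$ \rref{p:additive},
\[
F_{\CHt}(x,y)=\exp\big(\exp^{-1}(x)+\exp^{-1}(y)\big),
\]
so that $\exp^{-1}$ is the logarithm of $F_{\CHt}$.

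For injectivity I would rationalise. Over $\Qq$ the Lazard ring is the polynomial ring $L\otimes\Qq=\Qq[\ell_1,\ell_2,\dots]$ on the coefficients $\ell_n$ of the logarithm of $F_L$, and $\Phi\otimes\Qq$ sends $\ell_n$ to the coefficients of $\log F_{\CHt}=\exp^{-1}$. As $\exp$ and $\exp^{-1}$ are mutually inverse power series with derivative $1$ at the origin, the coefficients of $\exp^{-1}$ generate the ring $\Qq[\bb]$ over $\Qq$, since they determine and are determined by the coefficients $b_i$ of $\exp$; hence $\Phi\otimes\Qq\colon\Qq[\ell_1,\ell_2,\dots]\to\Qq[\bb]$ is a graded surjection, and since both sides have one polynomial generator in each degree $-n$, comparison of Poincar\'e series shows it is an isomorphism. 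As $L$ is torsion-free, $\Phi$ is injective; squeezing $\Phi(L)\subseteq\Laz\subseteq\Zz[\bb]$ then yields $\Laz\otimes\Qq=\Qq[\bb]$ as well.

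Surjectivity is where the work lies, and I would reduce it to a statement about indecomposables. Letting $Q(-)$ denote the quotient by the ideal of decomposable elements, a routine induction on the degree reduces the surjectivity of $\Phi$ to that of $QL^{-n}\to Q\Laz^{-n}$ for each $n\geq1$. Both groups have rank one by the previous paragraph, and the characteristic number $s_n$ (the degree of the $n$-th Newton class, evaluated on $-\Tan_X$) is additive and kills decomposables, so it descends to $Q\Laz^{-n}$. By Lazard's structure theorem $QL^{-n}\cong\Zz$ and $s_n$ identifies it with $d_n\Zz\subseteq\Zz$, where $d_n=p$ if $n+1$ is a power of a prime $p$ and $d_n=1$ otherwise. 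On the geometric side I would compute $s_n$ on explicit varieties available over the arbitrary field $k$: the projective space $\Pp^n$, for which $s_n=n+1$, and the Milnor hypersurfaces $H_{i,j}\subset\Pp^i\times\Pp^j$ of degree $(1,1)$ with $i+j=n+1$, for which $s_n=-\binom{i+j}{i}$. Their classes lie in $\Laz$, and the classical identity $\gcd\{n+1,\binom{i+j}{i}:i+j=n+1\}=d_n$ shows that the image of $QL^{-n}$ in $Q\Laz^{-n}$ already realises the subgroup $d_n\Zz$ under $s_n$.

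The main obstacle is precisely to turn this lower bound into an equality, that is, to prove the integral congruence $s_n(X)\in d_n\Zz$ for \emph{every} smooth projective $k$-variety $X$ of dimension $n$ (equivalently, that $s_n$ is injective on $Q\Laz^{-n}$, so that $\Laz$ acquires no indecomposable generators beyond those coming from $L$). Rationally this is invisible, so it cannot be obtained from the logarithm alone; I expect it to require a genuine congruence between Chern numbers, for instance via the integrality properties of the formal group law $F_{\CHt}$ and Mishchenko's formula $\exp^{-1}(x)=\sum_{n}\lc\Pp^n\rc\,x^{n+1}/(n+1)$ (which follows from \rref{lemm:deformation}), or via characteristic-number operations. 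Once this divisibility is in hand, $QL^{-n}\to Q\Laz^{-n}$ is an isomorphism for all $n$, whence $\Phi$ is surjective; together with injectivity it is the claimed isomorphism of formal group laws.
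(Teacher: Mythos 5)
There is a genuine gap, and you have in fact flagged it yourself: your last paragraph is not a proof but a statement of the theorem's actual content. The formal scaffolding is fine --- the classifying map $\Phi\colon L\to\Laz$ factoring through $\fund{\CHt}$ because the $a_{i,j}$ lie there, the rational injectivity via the logarithm $\exp^{-1}$ and a Poincar\'e-series comparison, and the reduction of surjectivity to the indecomposable quotients $QL^{-n}\to Q\Laz^{-n}$. But the two assertions you then need are (a) $s_n(X)\in d_n\Zz$ for \emph{every} smooth projective $k$-variety $X$ of dimension $n$, in particular $p\mid s_n(X)$ whenever $n=p^q-1$, and (b) $s_n$ is injective on $Q\Laz^{-n}$, i.e.\ an element of $\Laz^{-n}$ killed by $s_n$ is decomposable \emph{in $\Laz$} (not merely in $\Zz[\bb]$ or after tensoring with $\Qq$). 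These are not equivalent, contrary to your parenthetical: one bounds the image of $s_n$, the other its kernel, and your surjectivity argument requires both. Together they are essentially the whole theorem; everything preceding them is bookkeeping, since rationally the statement is trivial, as you note. The tools you gesture at cannot close the gap: Mishchenko's formula and any reasoning with the logarithm take place in $\Laz\otimes\Qq$ and are blind to a mod-$p$ congruence, while ``characteristic-number operations'' is the correct idea but names exactly the hard work --- in topology this is Quillen's Steenrod-operation proof of the Milnor--Novikov divisibilities \cite{Quillen-Elementary}, and constructing such operations for $\CHt$ over an arbitrary field (including characteristic $p$, where the paper insists on avoiding resolution of singularities and homotopical methods) is a substantial undertaking, not a citation-sized remark.

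For comparison with the paper: the paper gives no proof of \rref{th:Lazard} at all --- it quotes Merkurjev \cite[Theorem~8.2]{Mer-Ori}, whose elementary algebraic argument within the twisted theory $\CHt$ supplies precisely the ingredient you are missing. Note also that you correctly avoided circularity with \rref{cor:Lazard} (that $\Laz$ is generated by the $a_{i,j}$), which the paper \emph{deduces from} the theorem; but this means your step 3 may not assume any structural description of $\Laz$ or of $Q\Laz^{-n}$ (torsion-freeness, rank, polynomial generators) beyond what you prove. As it stands, your proposal is a correct reduction of the theorem to its classical hard core, together with an accurate diagnosis of where the difficulty lies --- but it is not a proof.
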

\begin{corollary}
\label{cor:Lazard}
The ring $\Laz$ is generated by the coefficients $a_{i,j}$ of \eqref{eq:FGL}.
\end{corollary}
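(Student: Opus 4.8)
The plan is to deduce this purely formally from the universality asserted in Theorem~\ref{th:Lazard}, using the standard retraction trick. Let $\Ac \subseteq \Laz$ denote the subring generated by the coefficients $a_{i,j}$ of the formal group law $F_{\CHt}$ from \eqref{eq:FGL}; these elements indeed lie in $\Laz = \fund{\CHt}$ as noted just after \eqref{eq:FGL}. The goal is to prove $\Ac = \Laz$. First I would observe that, since every coefficient $a_{i,j}$ of $F_{\CHt}$ belongs to $\Ac$ by construction, the power series $F_{\CHt}$ already lies in $\Ac[[x,y]]$, so that the pair $(\Ac, F_{\CHt})$ is itself a commutative formal group law.

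Next I would invoke the universal property of $(\Laz, F_{\CHt})$ from Theorem~\ref{th:Lazard}: for the formal group law $(\Ac, F_{\CHt})$ there exists a unique ring homomorphism $\psi \colon \Laz \to \Ac$ carrying $F_{\CHt}$ to $F_{\CHt}$ coefficientwise, i.e.\ satisfying $\psi(a_{i,j}) = a_{i,j}$ for all $i,j$. Composing with the inclusion $\iota \colon \Ac \hookrightarrow \Laz$ yields a ring endomorphism $\iota \circ \psi \colon \Laz \to \Laz$ which fixes each $a_{i,j}$, and hence sends $F_{\CHt}$ to $F_{\CHt}$. Applying the \emph{uniqueness} clause of the universal property once more, this time to the target $(\Laz, F_{\CHt})$ itself, I would conclude that the identity is the only such endomorphism, so $\iota \circ \psi = \id_{\Laz}$. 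This forces $\iota$ to be surjective, giving $\Ac = \Laz$, as desired.

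There is essentially no computational obstacle here: the entire argument is a one-line consequence of the universal property, and the only point requiring care is to apply the uniqueness part of universality twice, with two different targets. The one subtlety I would flag is ensuring that the homomorphism produced by universality is genuinely the coefficientwise substitution map (so that $\psi(a_{i,j}) = a_{i,j}$), which is exactly what it means for $\psi$ to classify the formal group law $(\Ac, F_{\CHt})$; once this is in place, the retraction identity $\iota \circ \psi = \id_{\Laz}$ follows formally and the surjectivity of $\iota$ is immediate.
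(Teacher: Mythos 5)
Your proof is correct, and it takes a genuinely different route from the paper's. The paper simply quotes the construction of the Lazard ring: by \cite[II, \S5]{Adams-stable}, the coefficient ring of the universal commutative formal group law is \emph{by construction} generated by the coefficients of its power series (it is a quotient of the polynomial ring on symbols $a_{i,j}$), and the corollary then follows by transporting this along the isomorphism of \rref{th:Lazard}. You instead never open the black box: writing $\Ac \subseteq \Laz$ for the subring generated by the $a_{i,j}$, you note that $(\Ac, F_{\CHt})$ is itself a commutative formal group law (legitimate, since the axioms hold in $\Laz[[x,y,z]]$ and $\Ac[[x,y,z]] \to \Laz[[x,y,z]]$ is injective), and then apply the uniqueness clause of universality twice --- once to produce $\psi \colon \Laz \to \Ac$ with $\psi(a_{i,j})=a_{i,j}$, and once to identify $\iota \circ \psi$ with $\id_{\Laz}$ --- so that $\iota$ is a surjection and $\Ac = \Laz$. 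This retraction trick is the standard argument that "canonical generators of a universal object generate," and it is sound; the one hypothesis you are using, and should be aware you are using, is that \rref{th:Lazard} asserts universality in the full sense, including uniqueness of classifying maps, which it does. What each approach buys: yours is construction-free and self-contained (no citation beyond \rref{th:Lazard} is needed), and being purely formal it transfers verbatim to other universal situations in the paper --- for instance it shows, via \rref{prop:Lp_univ}, that $\Lp$ is generated by the images of the $a_{i,j}$; the paper's version is shorter given the literature and makes the mechanism visible, since Lazard's ring is literally presented by the $a_{i,j}$.
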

\begin{proof}
By construction \cite[II, \S5]{Adams-stable}, the coefficient ring of the universal commutative formal group law is generated by the coefficients of the corresponding power series. Thus the corollary follows from \rref{th:Lazard}.
\end{proof}

\begin{lemma}
\label{lemm:FGL_Lp}
We have $[p]_{\Cht}(x) = 0$.
\end{lemma}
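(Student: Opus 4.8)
The plan is to linearise the formal group law of $\Cht$ by means of its logarithm, and then exploit the fact that the coefficient ring $\Cht(\Spec k) = \Fp[\bb]$ has characteristic $p$.

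First I would observe that $\Ch = \CH/p$ is additive, i.e.\ $F_{\Ch}(x,y)=x+y$. This follows from \rref{p:additive}: the Chow ring is additive, and the first Chern class in $\CH/p$ is the reduction modulo $p$ of the one in $\CH$, so the equality $c_1(L\otimes M)=c_1(L)+c_1(M)$ persists. Consequently the description of the formal group law of the twisted theory given in \rref{p:FGL_exp} simplifies to
\[
x +_{\Cht} y = \exp\!\big(\exp^{-1}(x) + \exp^{-1}(y)\big) \in \Cht(\Spec k)[[x,y]],
\]
which means precisely that the power series $\exp^{-1}$ is a homomorphism from $(\Cht(\Spec k),F_{\Cht})$ to the additive formal group law, namely $\exp^{-1}(x +_{\Cht} y) = \exp^{-1}(x) + \exp^{-1}(y)$.

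Next I would transport the formal multiplication by $p$ along this homomorphism. Using the iterative definition $[n]_{\Cht}(x) = F_{\Cht}\big([n-1]_{\Cht}(x),x\big)$ from \rref{p:formal_mult}, a straightforward induction on $n\geq 0$ gives
\[
\exp^{-1}\!\big([n]_{\Cht}(x)\big) = n\cdot \exp^{-1}(x) \in \Fp[\bb][[x]],
\]
the base case $n=0$ being $\exp^{-1}(0)=0$ and the inductive step being an immediate application of the additivity of $\exp^{-1}$ recorded above. Taking $n=p$ and using that $\Fp[\bb]$ has characteristic $p$, the right-hand side vanishes, so $\exp^{-1}\!\big([p]_{\Cht}(x)\big)=0$. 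Applying the composition inverse $\exp$, and noting that $\exp(0)=0$ since $\exp(x)=\sum_{i}b_ix^{i+1}$ has no constant term, I conclude that $[p]_{\Cht}(x)=0$.

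I do not expect a genuine obstacle here; the computation is essentially the transport of $p$-multiplication through a logarithm. The only point requiring a little care is to confirm that $\exp^{-1}$ really has coefficients in $\Fp[\bb]$ — which holds because $\exp$ has leading term $x$ and is therefore compositionally invertible over any ring — and to keep track throughout that the entire argument takes place in the characteristic-$p$ ring $\Fp[\bb][[x]]$, where multiplication by $p$ annihilates every element.
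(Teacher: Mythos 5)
Your proof is correct and is essentially the paper's argument written out in full: the paper likewise deduces additivity of $\Ch=\CH/p$ from \rref{p:additive}, so that $[p]_{\Ch}(x)=px=0$ over $\Fp[\bb]$, and transports this through the conjugation by $\exp$ coming from \rref{p:c1_tilde} (of which your use of \rref{p:FGL_exp} and the induction $\exp^{-1}([n]_{\Cht}(x))=n\exp^{-1}(x)$ is just the explicit form). No gaps; your care about $\exp^{-1}$ being defined over $\Fp[\bb]$ is exactly the right sanity check.
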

\begin{proof}
Since $[p]_{\Ch}(x) = 0$ by \rref{p:additive}, this follows from \rref{p:c1_tilde}.
\end{proof}

\begin{proposition}
\label{prop:Lp_univ}
The kernel of the surjective morphism $\Laz \to \Lp$ is the ideal generated by the coefficients of the power series $[p]_{\CHt}(x) \in \Laz[[x]]$. Thus $(\Lp,F_{\Cht})$ is the universal commutative formal group law whose formal multiplication by $p$ (see \rref{p:formal_mult}) vanishes.
\end{proposition}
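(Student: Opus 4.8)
The plan is to set $J \subset \Laz$ to be the ideal generated by the coefficients of the power series $[p]_{\CHt}(x)$, and to prove the two inclusions $J \subseteq \ker(\Laz \to \Lp)$ and $\ker(\Laz \to \Lp) \subseteq J$ separately; the final sentence then follows formally from Theorem~\ref{th:Lazard}. For the first inclusion, the surjection $\Laz \to \Lp$ is the restriction of the reduction map $\CHt(\Spec k) \to \Cht(\Spec k)$, which carries $F_{\CHt}$ to $F_{\Cht}$, hence the series $[p]_{\CHt}(x)$ to $[p]_{\Cht}(x)$. By Lemma~\ref{lemm:FGL_Lp} the latter vanishes, so every coefficient of $[p]_{\CHt}(x)$ lies in the kernel, giving $J \subseteq \ker$ and an induced surjection $\bar\psi \colon \Laz/J \to \Lp$.

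Next I would record the universal property of the source. By Theorem~\ref{th:Lazard} a ring homomorphism $\Laz \to R$ is the same datum as a formal group law $G$ over $R$; such a homomorphism kills $J$ precisely when $[p]_G(x)=0$, since $J$ is generated by the coefficients of $[p]_{\CHt}(x)$ and these map to those of $[p]_G(x)$. Thus $\Laz/J$ represents the functor of formal group laws with vanishing formal multiplication by $p$, i.e.\ it is the universal such law. Once the kernel is identified with $J$ this is exactly the second assertion; conversely this universal property is the tool I would use to prove injectivity of $\bar\psi$.

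The heart of the matter, and the main obstacle, is the reverse inclusion $\ker \subseteq J$, equivalently the injectivity of $\bar\psi$. Here I would exploit the exponential description of $F_{\Cht}$. Since $\Ch=\CH/p$ is additive (\rref{p:additive}), \rref{p:FGL_exp} gives $F_{\Cht}(x,y)=\exp(\exp^{-1}(x)+\exp^{-1}(y))$ with $\exp(x)=\sum_i b_i x^{i+1}$, so $\exp$ is a strict isomorphism from the additive formal group law to $F_{\Cht}$ whose coefficients are the free variables $b_i$. This identifies $\Cht(\Spec k)=\Fp[\bb]$ with the ring classifying pairs of a formal group law over an $\Fp$-algebra together with a strict isomorphism to it from the additive one (for which $[p]$ automatically vanishes), and identifies the composite $\Laz/J \xrightarrow{\bar\psi} \Lp \hookrightarrow \Fp[\bb]$ with the morphism that remembers only the underlying law. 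The plan is to show this morphism is injective by showing it is faithfully flat: over $\Laz/J$ the strict isomorphisms from the additive law to the universal $[p]=0$ law form a pseudo-torsor under the automorphism group scheme of the additive formal group law, and the vanishing of $[p]$ forces this pseudo-torsor to be an actual torsor (a logarithm exists after the base change to $\Fp[\bb]$ furnished by $\exp$ itself). A faithfully flat morphism is injective, so $\bar\psi$ is injective, and together with surjectivity it is an isomorphism.

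I expect the delicate point to be precisely this torsor/faithful-flatness claim: one must check that adjoining the universal logarithm to the universal $[p]=0$ formal group law is faithfully flat over $\Laz/J$, i.e.\ that the pseudo-torsor of strict isomorphisms is fppf-locally trivial. The surjectivity of $\bar\psi$, equivalently that $\Lp$ is generated by the coefficients of $F_{\Cht}$, is immediate from Corollary~\ref{cor:Lazard}, so no difficulty arises there; everything is concentrated in turning the fibrewise existence of logarithms, guaranteed by $[p]=0$, into a flat family, after which the two universal properties match and the kernel computation drops out.
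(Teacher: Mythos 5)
Your skeleton agrees with the paper's: the inclusion of the ideal $J$ (generated by the coefficients of $[p]_{\CHt}(x)$) into the kernel via Lemma \rref{lemm:FGL_Lp}, the identification of $\Laz/J$ with the universal formal group law $(\Gamma,\Phi)$ satisfying $[p]_{\Phi}=0$ via Theorem \rref{th:Lazard}, surjectivity onto $\Lp$ from Corollary \rref{cor:Lazard}, and the reduction of everything to injectivity of $\Gamma \to \Lp \subset \Fp[\bb]$. The gap is in how you prove that injectivity. You propose faithful flatness of $\Gamma \to \Fp[\bb]$ by viewing $\Spec \Fp[\bb]$ as a pseudo-torsor over $\Spec \Gamma$ under the group of strict automorphisms of the additive law, and you assert that vanishing of $[p]$ upgrades it to an actual torsor because ``a logarithm exists after the base change to $\Fp[\bb]$ furnished by $\exp$ itself.'' That justification is circular: \emph{every} pseudo-torsor $P \to S$ under a group $G$ trivializes after base change along $P$ itself (one always has $P \times_S P \simeq G \times P$), and such a trivialization yields fppf-local triviality over $S$ only if $P \to S$ is already known to be an fppf cover --- that is, faithfully flat, which is precisely what you set out to prove. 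What your argument actually needs is that a formal group law with $[p]=0$ admits a logarithm fppf-locally on the base (in fact, over $\Gamma$ itself), and that is a genuine theorem rather than a formal consequence of $[p]=0$; over a general $\Fp$-algebra, possibly with nilpotents, there is no soft argument producing the logarithm.

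That theorem is exactly the one nontrivial input the paper imports: Quillen's \cite[Proposition 7.3]{Quillen-Elementary} states that the universal law $(\Gamma,\Phi)$ with vanishing formal multiplication by $p$ admits a \emph{global} logarithm $l$, with $\Phi(x,y)=l^{-1}(l(x)+l(y))$. Once $l$ is available, no flatness is needed at all: the paper defines $\varphi\colon \Fp[\bb]\to\Gamma$ by sending $b_i$ to the $(i+1)$-st coefficient of $l^{-1}$, and checks via \rref{p:FGL_exp} that $\varphi\circ\pi$ classifies the law $l^{-1}(l(x)+l(y))=\Phi$, hence $\varphi\circ\pi=\id_\Gamma$; so $\pi\colon \Gamma\to\Lp$ admits a retraction and is injective. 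Your torsor picture does become correct once Quillen's result is granted (it then even gives a global section, so the torsor is trivial), but as written your proof replaces the decisive input with a circular step. To repair it, either cite Quillen's Proposition 7.3 as the paper does, or supply an independent proof that formal group laws with $[p]=0$ admit logarithms.
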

\begin{proof}
Let $(\Gamma,\Phi)$ be the universal commutative formal group law whose formal multiplication by $p$ vanishes. By \cite[Proposition 7.3]{Quillen-Elementary}, this law admits a logarithm, that is a power series $l \in \Gamma[[t]]$ with leading coefficient $t$ such that
\[
\Phi(x,y) = l^{-1}(l(x)+l(y)) \in \Gamma[[x,y]],
\]
where $l^{-1}$ denotes the composition inverse of $l$.

The morphism $\Laz \to \Gamma$ classifying the formal group law $(\Gamma,\Phi)$ is surjective, and its kernel is the ideal generated by the coefficients of the power series $[p]_{\CHt}(x) \in \Laz[[x]]$. By \rref{lemm:FGL_Lp}, the surjective morphism $\Laz \to \Lp$ factors through a surjective morphism $\Gamma \to \Lp$. To conclude the proof, we will provide a retraction of the composite $\pi \colon \Gamma \to \Lp \subset \Fp[\bb]$. Consider the morphism $\varphi \colon \Fp[\bb] \to \Gamma$ sending $b_i$, for $i\geq 1$, to the $(i+1)$-st coefficient of the power series $l^{-1}$. Denote by $e \in \Fp[\bb][[x]]$ the image of the power series $\exp \in \Zz[\bb][[x]]$ defined in \eqref{p:def_exp}. By \rref{p:FGL_exp}, the morphism $\pi$ classifies the formal group law $(\Fp[\bb],F)$, where
\[
F(x,y) = e (e^{-1}(x) + e^{-1}(y))
\]
so that the morphism $\varphi \circ \pi$ classifies the formal group law $(\Gamma,\varphi_*F)$, where
\[
\varphi_*F(x,y) = \varphi_*e (\varphi_*(e^{-1})(x) + \varphi_*(e^{-1})(y)).
\]
Here the notation $\varphi_*$ stands for the ring morphism $\Fp[\bb][[x,y]] \to \Gamma[[x,y]]$, resp.\ $\Fp[\bb][[x]] \to \Gamma[[x]]$, induced by taking the image of the coefficients under $\varphi$. By construction $\varphi_*e=l^{-1}$, and $\varphi_*(e^{-1}) =(\varphi_*e)^{-1} = l$, hence
\[
\varphi_*F(x,y) = l^{-1}(l(x) +l(y)) = \Phi,
\]
which proves that $\varphi \circ \pi = \id_\Gamma$. 
\end{proof}

\begin{remark}
The rings $\Laz$ and $\Lp$ admit the following concrete descriptions. Declare two smooth projective $k$-schemes equivalent if they have the same collection of Chern numbers (resp.\ modulo $p$ Chern numbers), indexed by partitions (see \rref{p:Chern_number}). The set of equivalence classes is an abelian monoid for the disjoint union of $k$-schemes. The associated abelian group, together with its ring structure induced by the cartesian product of $k$-schemes coincides with $\Laz$ (resp.\ $\Lp$). In view of \rref{th:Lazard} and \rref{prop:Lp_univ} and \cite[Theorems 6.5 and 7.8]{Quillen-Elementary}, the ring $\Laz$ (resp.\ $\Ld$) is isomorphic to the complex (resp.\ unoriented) cobordism ring.
\end{remark}

\subsection{Projective bundles}
\hfill\\

\vspace*{-1em}
In this section the theory $\Hh$ is either $\CH$ or $\Ch = \CH/p$ for some prime $p$. We will compute the pushforward morphism along a projective bundle in the theory $\Ht$ in terms of Chern classes in the theory $\Hh$. This is a variant of Quillen's formula for complex cobordism \cite[Theorem~1]{Quillen-FGL}.

\begin{para}
Let $X\in \Sm_k$. Denote by $\Rh(X)$ the set of those elements of $\Ht(X)[[y]]$ whose $y^i$-coefficient lies in $\Ht^{-i}(X)$ for all $i\in \Nn$. Then $\Rh(X)$ is a subring of $\Ht(X)[[y]]$. Moreover, if $f \in \Rh(X)$ is invertible in $\Ht(X)[[y]]$, then $f^{-1} \in R(X)$. Using the fact that $\Hh^j(X)=0$ for $j<0$, we see that, for any partition $\alpha$, the $b_\alpha$-coefficient of any element of $\Rh(X)$ is of the form $a_0 + a_1y + \cdots +a_{|\alpha|} y^{|\alpha|}$ with $a_i \in \Hh^{|\alpha| -i}(X)$.
\end{para}

\begin{para}
\label{p:fund_pol_y}
It follows from the splitting principle \rref{p:splitting} that there is a unique way to define for every $X\in \Sm_k$ and $E \in K_0(X)$ an element $P^{\Hh}(E\{y\}) \in \Rh(X)^\times$ satisfying (see \rref{p:fund_pol} for the definition of $\pi$)
\begin{enumerate}[(i)]
\item $f^*P^{\Hh}(E\{y\}) = P^{\Hh}(f^*E\{y\})$ for any  $f\colon Y \to X$ in $\Sm_k$ and $E\in K_0(X)$,

\item $P^{\Hh}(L\{y\}) = \pi(c_1^{\Hh}(L)+y)$ when $L$ is a line bundle,
\item $P^{\Hh}( (E+F)\{y\}) = P^{\Hh}(E\{y\}) P^{\Hh}(F\{y\})$ for any $E,F \in K_0(X)$.
\end{enumerate}
When $E,F \in K_0(X)$ we set $P^{\Hh}(E\{y\} +F)=P^{\Hh}(E\{y\})P^{\Hh}(F) \in \Rh(X)^{\times}$.
\end{para}

\begin{para}
\label{p:fund_pol_tensor}
Let $L$ be a line bundle over $X\in \Sm_k$, and $E,F \in K_0(X)$. It follows from the splitting principle \rref{p:splitting} (and \rref{p:additive}) that $P^{\Hh}(E\otimes L+F) \in \Ht(X)$ is the image of $P^{\Hh}(E\{y\}+F) \in \Ht(X)[[y]]$ under $y \mapsto c_1^{\Hh}(L)$.
\end{para}

\begin{para}
\label{p:E1}
Let $X \in \Sm_k$ and $E,F \in K_0(X)$. For each partition $\alpha$, we denote by $c_\alpha(E\{y\}+F) \in \Hh(X)[y]$ the $b_\alpha$-coefficient of $P^{\Hh}(E\{y\}+F) \in \Rh(X)$. Its image under $y\mapsto 1$ is an element $c_\alpha(E\{1\}+F) \in \Hh(X)$ whose component in $\Hh^j(X)$ we denote by $c_\alpha(E\{1\}+F)_j$. Then in $\Hh(X)[y]$
\begin{equation}
\label{eq:E1}
c_\alpha(E\{y\}+F) = \sum_{i=0}^{|\alpha|} c_\alpha(E\{1\}+F)_{|\alpha|-i} y^i.
\end{equation}
\end{para}

\begin{para}
When $\alpha$ is the partition $(1,\cdots,1)$ with $|\alpha|=n$, we will write $c_n(E\{y\}+F)$ instead of $c_\alpha(E\{y\}+F)$ (see \rref{p:c_alpha_c_i}). 
\end{para}

\begin{lemma}
\label{lemm:CF2}
Let $X\in \Sm_k$ and $E,F\in K_0(X)$. For any partition $\alpha$, we have in $\Hh(X)[y]$ (see \rref{p:CF-Q} and \rref{lemm:c_alpha_inv} for the definitions of $Q_\alpha$ and $\lambda_{\alpha,\beta}$)
\[
Q_\alpha(c_1(E\{y\}+F),c_2(E\{y\}+F),\cdots) = c_\alpha(E\{y\}+F) = \sum_{|\beta|=|\alpha|} \lambda_{\alpha,\beta} c_\beta(-E\{y\}-F).
\]
\end{lemma}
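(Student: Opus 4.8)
The plan is to deduce both equalities from their untwisted counterparts \rref{lemm:CF-Q} and \rref{lemm:c_alpha_inv} by specialising the formal variable $y$ to the first Chern class of a line bundle, via the compatibility recorded in \rref{p:fund_pol_tensor}. Fix the partition $\alpha$ and put $d=|\alpha|$. I would choose an integer $M\geq d$, work over $X'=X\times\Pp^M$ with projection $\mathrm{pr}\colon X'\to X$, and set $L=\Oc(1)$ on the $\Pp^M$-factor and $h=c_1^{\Hh}(L)\in\Hh^1(X')$. By \dref{def:oct}{def:oct:PB} the powers $1,h,\dots,h^M$ form a basis of $\Hh(X')$ over $\mathrm{pr}^*\Hh(X)$; consequently the ring homomorphism $\varepsilon\colon\Hh(X)[y]\to\Hh(X')$ with $\varepsilon|_{\Hh(X)}=\mathrm{pr}^*$ and $\varepsilon(y)=h$ is injective on polynomials of degree $\leq M$.

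First I would verify that all three expressions in the statement are $\Hh(X)$-polynomials in $y$ of degree $\leq d\leq M$: the classes $c_\beta(E\{y\}+F)$ and $c_\beta(-E\{y\}-F)$ with $|\beta|\leq d$ are such by \eqref{eq:E1}, and $Q_\alpha(c_1(E\{y\}+F),c_2(E\{y\}+F),\dots)$ is isobaric of weight $d$ in arguments whose $y$-degrees are bounded by their indices. In view of the injectivity of $\varepsilon$ in that range, it then suffices to prove the two equalities after applying $\varepsilon$.

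The gain from the specialisation is that it turns the $\{y\}$-formalism into honest $K_0$-classes. Applying \rref{p:fund_pol_tensor} to $\mathrm{pr}^*E$, $\mathrm{pr}^*F$ and $L$, together with the pullback compatibility of \rref{p:fund_pol_y}, identifies $\varepsilon\big(c_\beta(E\{y\}+F)\big)$ with the genuine Conner--Floyd class $c_\beta(G)$, where $G=\mathrm{pr}^*E\otimes L+\mathrm{pr}^*F\in K_0(X')$; the same property applied to $-E$ and $-F$, using $-G=(-\mathrm{pr}^*E)\otimes L-\mathrm{pr}^*F$, identifies $\varepsilon\big(c_\beta(-E\{y\}-F)\big)$ with $c_\beta(-G)$. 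Now \rref{lemm:CF-Q} and \rref{lemm:c_alpha_inv}, applied to $G$, yield
\[
c_\alpha(G)=Q_\alpha(c_1(G),c_2(G),\dots)=\sum_{|\beta|=d}\lambda_{\alpha,\beta}\,c_\beta(-G)\in\Hh(X').
\]
Because $Q_\alpha$ is a polynomial and $\varepsilon$ a ring homomorphism, this chain is precisely the image under $\varepsilon$ of the chain to be proved; by injectivity of $\varepsilon$ the latter already holds in $\Hh(X)[y]$.

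I expect the only genuine obstacle to be the degree bookkeeping of the second paragraph, which guarantees that the finite basis $1,h,\dots,h^M$ detects the relevant polynomials faithfully; enlarging $M$ is harmless, since $\alpha$ (hence $d$) is fixed throughout. Everything else is a mechanical transcription of the untwisted proofs through the substitution $y\mapsto c_1(L)$.
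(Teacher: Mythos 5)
Your proposal is correct and is essentially the paper's own proof: the paper likewise embeds $\Hh(X)[y]$ into $\Hh(X\times\Pp^n)$ via $y\mapsto c_1(\Oc(1))$ (you merely allow $\Pp^M$ with $M\geq|\alpha|$), uses the projective bundle basis \dref{def:oct}{def:oct:PB} for injectivity on polynomials of bounded degree, checks via \eqref{eq:E1} that all three expressions lie in that range, and reduces through \rref{p:fund_pol_tensor} to \rref{lemm:CF-Q} and \rref{lemm:c_alpha_inv} applied to $E(1)+F$. Your extra bookkeeping (the isobaric-weight bound for $Q_\alpha$ and the identity $-G=(-\mathrm{pr}^*E)\otimes L-\mathrm{pr}^*F$) just makes explicit what the paper leaves implicit.
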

\begin{proof}
Let $n=|\alpha|$. By \dref{def:oct}{def:oct:PB}, the morphism $\rho \colon \Hh(X)[y] \to \Hh(X \times \Pp^n)$ induced by $y \mapsto c_1(\Oc(1))$ and the pullback along $X \times \Pp^n \to X$ restricts to an injection on the subset of polynomials in $y$ of degree $\leq n$. The equalities take place in that subset by \eqref{eq:E1}, hence it suffices to verify their images under $\rho$. By \rref{p:fund_pol_tensor}, this follows from \rref{lemm:CF-Q} and \rref{lemm:c_alpha_inv} applied to $E(1)+F \in K_0(X \times \Pp^n)$.
\end{proof}

\begin{para}
Let $R$ be a commutative ring. We define a morphism $\Res_y \colon R[[y]] \to R$ by mapping a power series $\sum_{i \in \Zz} a_i y^i$ to its $y^{-1}$-coefficient $a_{-1}$.
\end{para}

\begin{proposition}
\label{prop:Quillen}
Let $S \in \Sm_k$ and $V \to S$ be a vector bundle of rank $r$. Denote by $p \colon \Pp(V) \to S$ the associated projective bundle. Then for any $m\geq 0$
\[
p_*^{\Ht} \big( c_1^{\Ht}(\Oc_{\Pp(V)}(1))^m\big) = \sum_{i \in \Nn} c_i^{\Hh}(-V) \Res_y\big( y^{-r-i}(\exp y)^mP^{\Hh}(-V\{y\}) \big)\in \Ht(S).
\]
\end{proposition}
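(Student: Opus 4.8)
The plan is to reduce the pushforward $p_*^{\Ht}$ along the projective bundle to a computation in the theory $\Hh$, where I can use the explicit basis of $\Hh(\Pp(V))$ over $\Hh(S)$ provided by the projective bundle axiom \dref{def:oct}{def:oct:PB}, and then repackage the answer as a residue. First I would unwind the definition of $p_*^{\Ht}$: by the definition of the twisted theory, $p_*^{\Ht}(a) = p_*^{\Hh}\big(P^{\Hh}(-\Tan_p)\,a\big)$, where $\Tan_p \in K_0(\Pp(V))$ is the relative tangent bundle. The Euler sequence identifies $\Tan_p$ with $p^*V \otimes \Oc(1) - 1$ in $K_0(\Pp(V))$, so $-\Tan_p = 1 - p^*V \otimes \Oc(1)$, and hence $P^{\Hh}(-\Tan_p) = P^{\Hh}\big((-p^*V)\otimes \Oc(1)\big)$. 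By \rref{p:fund_pol_tensor}, this is the image of $P^{\Hh}(-V\{y\})$ under $y \mapsto c_1^{\Hh}(\Oc(1))$.

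Next I would handle the class being pushed forward. By \rref{p:c1_tilde}, $c_1^{\Ht}(\Oc(1)) = \exp(c_1^{\Hh}(\Oc(1)))$, so writing $\zeta = c_1^{\Hh}(\Oc(1))$ the element $c_1^{\Ht}(\Oc(1))^m$ equals $(\exp \zeta)^m$. Combining this with the previous step, the quantity to compute is
\[
p_*^{\Ht}\big(c_1^{\Ht}(\Oc(1))^m\big) = p_*^{\Hh}\Big( (\exp\zeta)^m \cdot P^{\Hh}(-V\{y\})\big|_{y=\zeta} \Big) \in \Ht(S).
\]
The whole integrand is now a power series in $\zeta$ with coefficients in $\Ht(S)$, and the task becomes to evaluate $p_*^{\Hh}(\zeta^n)$ for each $n$ and assemble. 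The key classical input is Quillen's-type formula for $p_*^{\Hh}(\zeta^n)$ in terms of the Chern classes $c_i^{\Hh}(-V)$, which follows from \dref{def:oct}{def:oct:PB} and \rref{p:Chern}: since $\zeta$ satisfies the defining relation $\sum_{i=0}^r (-1)^i p^*(c_i(V))\zeta^{r-i}=0$ coming from the Chern classes of $V$, and $p_*^{\Hh}(\zeta^j) = 0$ for $j<r-1$ with $p_*^{\Hh}(\zeta^{r-1})=1$, one gets $p_*^{\Hh}(\zeta^n) = \sum_i c_i^{\Hh}(-V)\,[\text{coefficient extraction}]$. The clean way to phrase this is precisely as a residue: $p_*^{\Hh}(h(\zeta)) = \sum_{i\in\Nn} c_i^{\Hh}(-V)\,\Res_y\big(y^{-r-i} h(y)\big)$ for any power series $h$, which one verifies by checking it on the basis $h=\zeta^n$ using that $y^{-r-i}y^n$ has residue $1$ exactly when $n = r+i-1$.

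Applying this residue formula with $h(y) = (\exp y)^m P^{\Hh}(-V\{y\})$ yields exactly the stated identity, after using the projection formula \dref{def:oct}{def:oct:PF} to pull the $\Hh(S)$-coefficients $c_i^{\Hh}(-V)$ outside $p_*^{\Hh}$. The main obstacle I expect is purely bookkeeping rather than conceptual: one must be careful that the residue formula, applied to an infinite series in $y$, converges in the appropriate sense and exchanges correctly with $p_*^{\Hh}$. Here the grading control provided by the ring $\Rh(S)$ is essential — each $b_\alpha$-coefficient of the integrand is a \emph{polynomial} in $y$ (by \rref{p:E1}), so the residue in each fixed $b_\alpha$-degree is a finite computation, the sum over $i$ is finite by \rref{p:nilp}, and no convergence issue actually arises. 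Verifying that these finiteness statements hold degreewise, so that the manipulations above are legitimate inside $\Ht(S)$, is the one step that requires genuine care.
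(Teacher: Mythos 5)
Your proposal is correct and follows essentially the same route as the paper: unwind $p_*^{\Ht}$ via $P^{\Hh}(-\Tan_p)$ and the Euler sequence, substitute $y \mapsto c_1^{\Hh}(\Oc(1))$ using \rref{p:fund_pol_tensor}, expand degreewise (your appeal to $\Rh(S)$ for finiteness is exactly the paper's implicit bookkeeping), and evaluate $p_*^{\Hh}$ on powers of $\xi$ via the projection formula, repackaging $p_*^{\Hh}(\xi^{r-1+i})=c_i^{\Hh}(-V)$ as a residue. The one imprecision is your attribution of $p_*^{\Hh}(\xi^j)=0$ for $j<r-1$ and $p_*^{\Hh}(\xi^{r-1+i})=c_i^{\Hh}(-V)$ to \dref{def:oct}{def:oct:PB} and \rref{p:Chern} alone: these are not formal consequences of the axioms for an arbitrary oriented theory (in cobordism, for instance, $p_*(1)=\lc \Pp(V) \rc \neq 0$), and they hold here only because this section fixes $\Hh$ to be $\CH$ or $\Ch$, where the vanishing follows from the grading axiom together with $\Hh^{<0}=0$ --- which is precisely why the paper cites Fulton at this step.
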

\begin{proof}
Let $\xi = c_1^{\Hh}(\Oc_{\Pp(V)}(1))$. Then $c_1^{\Ht}(\Oc_{\Pp(V)}(1))=\exp \xi$ by \rref{p:c1_tilde}. Write
\[
(\exp y)^mP^{\Hh}(-V\{y\})) = \sum_{j \in \Nn} \varphi_j y^j,
\]
where $\varphi_j \in \Ht(S)$. Since the tangent bundle of $p$ satisfies $\Tan_p = (p^*V)(1) -1 \in K_0(\Pp(V))$ (see e.g.\ \cite[\S B.5.8]{Ful-In-98}), we have in view of \rref{p:fund_pol_tensor}
\[
p_*^{\Ht} \big( c_1^{\Ht}(\Oc_{\Pp(V)}(1))^m\big) = p^{\Hh}_*\big((\exp \xi)^mP^{\Hh}(-(p^*V)(1))\big) = p^{\Hh}_* \Big( \sum_{j \in \Nn}p_{\Hh}^*(\varphi_j) \xi^j \Big).
\]
We have $p^{\Hh}_*(\xi^j) = 0$ for $j < r-1$ (see \cite[Proposition~3.1(a)(i)]{Ful-In-98}) and $p^{\Hh}_*(\xi^j) = c_{j+1-r}^{\Hh}(-V)$ for $j \geq r-1$ (this is how Chern classes are defined in \cite[\S3.2]{Ful-In-98}; that this definition coincides with the one given in \rref{p:Chern} follows from \cite[Remarks 3.2.4 and 3.2.3(a), Propositions~2.5(e) and 2.6(b)]{Ful-In-98}). Using the projection formula  \dref{def:oct}{def:oct:PF}, we obtain the required equality
\[
p_*^{\Ht} \big( c_1^{\Ht}(\Oc_{\Pp(V)}(1))^m\big) = \sum_{j\in\Nn} \varphi_j p^{\Hh}_*(\xi^j) = \sum_{i\in\Nn} \varphi_{i+r-1} c_i^{\Hh}(-V).\qedhere
\]
\end{proof}
\begin{corollary}
\label{cor:Quillen}
Let $S$ be a smooth projective $k$-scheme and $V \to S$ be a vector bundle of rank $r$. Then for any $m\geq 0$, we have in $\Ht(\Spec k)$
\[
\lc c_1^{\Ht}(\Oc_{\Pp(V)}(1))^m \rc = \sum_{i \in \Nn}\Res_y\Big( y^{-r-i} (\exp y)^m\deg \big(c_i^{\Hh}(-V)P^{\Hh}(-V\{y\} - \Tan_S)\big) \Big).
\]
\end{corollary}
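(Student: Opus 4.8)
The plan is to derive the corollary from Proposition~\ref{prop:Quillen} by applying the pushforward along the structural morphism of $S$ in the twisted theory $\Ht$. Since $S$ is smooth projective, so is $\Pp(V)$, and its structural morphism factors as $\pi_S \circ p$, where $\pi_S \colon S \to \Spec k$ and $p \colon \Pp(V) \to S$ is the projective bundle. Using that $\Ht$ is an oriented cohomology theory (\rref{prop:tilde_is_oct}) together with the functoriality of pushforwards asserted in \rref{def:oct}, one has
\[
\lc c_1^{\Ht}(\Oc_{\Pp(V)}(1))^m \rc = (\pi_S)_*^{\Ht}\Big( p_*^{\Ht}\big(c_1^{\Ht}(\Oc_{\Pp(V)}(1))^m\big)\Big),
\]
so that substituting the expression provided by Proposition~\ref{prop:Quillen} reduces the computation to applying $(\pi_S)_*^{\Ht}$ to the sum $\sum_{i\in\Nn} c_i^{\Hh}(-V)\Res_y\big(y^{-r-i}(\exp y)^m P^{\Hh}(-V\{y\})\big)$.

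Next I would unwind the definition $(\pi_S)_*^{\Ht}(a) = (\pi_S)_*^{\Hh}\big(P^{\Hh}(-\Tan_S)\,a\big)$, observing that $(\pi_S)_*^{\Hh}$ is exactly the operation denoted $\deg$, extended coefficient-wise to $\Ht(S) = \Hh(S)[\bb]$. The key remark is that neither $P^{\Hh}(-\Tan_S)$ nor $c_i^{\Hh}(-V)$ depends on the variable $y$, so both may be moved inside the residue operator; then the multiplicativity $P^{\Hh}(-V\{y\})P^{\Hh}(-\Tan_S) = P^{\Hh}(-V\{y\} - \Tan_S)$ from \rref{p:fund_pol_y} lets me rewrite the $i$-th summand as $\Res_y\big(y^{-r-i}(\exp y)^m\, c_i^{\Hh}(-V)P^{\Hh}(-V\{y\}-\Tan_S)\big)$ before $\deg$ is applied.

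The remaining and most delicate step is to commute $\deg = (\pi_S)_*^{\Hh}$ past $\Res_y$ and past multiplication by the Laurent expression $y^{-r-i}(\exp y)^m$. For this I would invoke that $\deg$ is linear over $\Hh(\Spec k)$ by the projection formula \dref{def:oct}{def:oct:PF}, hence linear over the coefficient ring $\Zz[\bb]$ (resp.\ $\Fp[\bb]$) once extended coefficient-wise in $\bb$ and in $y$; since $\Res_y$ merely extracts the $y^{-1}$-coefficient and the factor $y^{-r-i}(\exp y)^m$ has coefficients in $\Zz[\bb]$ (resp.\ $\Fp[\bb]$), both commutations become formal, and carrying the factor outside $\deg$ and summing over $i$ yields precisely the asserted formula. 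I expect the main obstacle to be the bookkeeping needed to justify these interchanges in the completed ring $\Ht(S)[[y]]$ --- in particular that $\deg$ commutes with the residue and with the (effectively finite) sum over $i$, which ultimately rests on the graded structure of $\Rh(S)$ already used in Proposition~\ref{prop:Quillen} to make sense of the series $(\exp y)^m P^{\Hh}(-V\{y\})$.
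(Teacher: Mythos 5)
Your proposal is correct and takes exactly the paper's route: the paper's entire proof of \rref{cor:Quillen} reads ``This follows from \rref{prop:Quillen} by pushing forward along $S \to \Spec k$,'' which is precisely your strategy. Your expanded justification --- unwinding $(\pi_S)_*^{\Ht}(a) = (\pi_S)_*^{\Hh}\big(P^{\Hh}(-\Tan_S)\,a\big)$, using the multiplicativity $P^{\Hh}(-V\{y\})P^{\Hh}(-\Tan_S) = P^{\Hh}(-V\{y\}-\Tan_S)$ from \rref{p:fund_pol_y}, and commuting $\deg$ past $\Res_y$ and the $y$-independent factors coefficient-wise --- is just the routine bookkeeping the paper leaves implicit, and it is carried out correctly.
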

\begin{proof}
This follows from \rref{prop:Quillen} by pushing forward along $S \to \Spec k$.
\end{proof}

\begin{corollary}
\label{lemm:projbundle_trivial}
Let $V \to S$ be a vector bundle of rank $r$, and $p\colon \Pp(V) \to S$ the associated projective bundle. If $c^{\Hh}_i(V)=0 \in \Hh(S)$ for all $i>0$, then for any $m \geq 0$ (we write $\Pp^n=\varnothing$ when $n<0$)
\[
p_*^{\Ht} \big( c_1^{\Ht}(\Oc_{\Pp(V)}(1))^m\big) = \lc \Pp^{r-1-m} \rc \in \Ht(S).
\]
\end{corollary}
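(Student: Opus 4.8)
The plan is to specialise \rref{prop:Quillen} to the hypothesis that all positive-degree Chern classes of $V$ vanish, and then to match the resulting residue with the class of a projective space, which I will identify by running the same specialisation on a trivial bundle.

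First I would observe that the vanishing hypothesis propagates to $-V$: since the total Chern class satisfies $c(V)=1$, we have $c(-V)=c(V)^{-1}=1$ by \rref{p:Chern_virtual}, so $c_0^{\Hh}(-V)=1$ and $c_i^{\Hh}(-V)=0$ for all $i>0$. Consequently, in the sum given by \rref{prop:Quillen} only the term $i=0$ survives, yielding
\[
p_*^{\Ht}\big(c_1^{\Ht}(\Oc_{\Pp(V)}(1))^m\big) = \Res_y\big(y^{-r}(\exp y)^m P^{\Hh}(-V\{y\})\big).
\]

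The next and most substantial step is to compute $P^{\Hh}(-V\{y\})$, and here is where I expect the main work to lie. Pulling back along a splitting $q$ as in \rref{p:splitting}, the defining properties \rref{p:fund_pol_y} give $q^*P^{\Hh}(V\{y\})=\prod_j \pi(x_j+y)$, where the $x_j$ are the Chern roots of $V$ (and $\pi$ is as in \rref{p:fund_pol}). This product is symmetric in the $x_j$, hence a power series in the elementary symmetric functions, i.e.\ in the classes $c_i(V)$; since $q^*$ is injective, $P^{\Hh}(V\{y\})$ itself is a function of the Chern classes of $V$ alone. Substituting the vanishing values $c_i(V)=0$ for $i>0$ leaves only the value at vanishing roots, namely $\pi(y)^r$. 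By the multiplicativity in \rref{p:fund_pol_y} this gives $P^{\Hh}(-V\{y\})=\pi(y)^{-r}$, and using $\exp y=y\,\pi(y)$ from \rref{p:def_exp} we rewrite this as $y^r(\exp y)^{-r}$. Feeding it into the display above, the powers of $y$ cancel and we obtain
\[
p_*^{\Ht}\big(c_1^{\Ht}(\Oc_{\Pp(V)}(1))^m\big) = \Res_y\big((\exp y)^{m-r}\big).
\]

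Finally I would identify this residue with $\lc \Pp^{r-1-m}\rc$. When $m\geq r$ the series $(\exp y)^{m-r}$ has no pole at $y=0$, so the residue vanishes, matching the convention $\Pp^{r-1-m}=\varnothing$. When $m<r$, I apply the very same specialisation to the trivial bundle $\Oc_S^{\oplus(r-m)}$ of rank $r-m$ over $\Spec k$ with exponent $0$: the structural morphism $q\colon\Pp^{r-1-m}\to\Spec k$ satisfies $q_*^{\Ht}(1)=\lc\Pp^{r-1-m}\rc$, and \rref{prop:Quillen} (equivalently \rref{cor:Quillen}) computes this as $\Res_y\big((\exp y)^{-(r-m)}\big)=\Res_y\big((\exp y)^{m-r}\big)$, exactly the quantity found above. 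This completes the identification, the one genuinely delicate point throughout being the reduction of $P^{\Hh}(-V\{y\})$ to the trivial-bundle value via the symmetric-function nature of $\prod_j \pi(x_j+y)$.
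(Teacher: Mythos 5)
Your proof is correct and follows essentially the same route as the paper: both rest on Proposition \rref{prop:Quillen} together with the observation that $P^{\Hh}(V\{y\})$ depends only on $r$ and the Chern classes of $V$ (which you justify via the splitting principle), thereby reducing everything to the trivial-bundle case. The only difference is in execution: where the paper, after this reduction, cites \cite[Lemma~5.2]{Mer-Ori} for the trivial bundle, you carry out the residue calculus explicitly and identify $\Res_y\big((\exp y)^{m-r}\big)$ with $\lc \Pp^{r-1-m}\rc$ by re-applying the same formula to a trivial bundle over $\Spec k$ --- a valid, self-contained substitute for that citation.
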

\begin{proof}
By its construction \rref{p:fund_pol_y}, the element $P^{\Hh}(V\{y\})$ depends only on $r$ and the Chern classes $c_i^{\Hh}(V)$. Thus, it follows from \rref{prop:Quillen} that $p_*^{\Ht}(c_1^{\Ht}(\Oc_{\Pp(V)}(1))^m)$ depends only on $r,m$ and the Chern classes $c_i^{\Hh}(V)$. Therefore we may assume that the bundle $V$ is trivial, and the statement is clear (see \cite[Lemma~5.2]{Mer-Ori}).
\end{proof}

\section{\texorpdfstring{$\mud$}{\textmu 2}-actions}
\numberwithin{theorem}{section}
\numberwithin{lemma}{section}
\numberwithin{proposition}{section}
\numberwithin{corollary}{section}
\numberwithin{example}{section}
\numberwithin{notation}{section}
\numberwithin{definition}{section}
\numberwithin{remark}{section}

\label{sect:mud}
\begin{para}
The functor associating to each commutative $k$-algebra $R$ the subgroup of those $r \in R^\times$ such that $r^2=1$ is represented by a finite commutative algebraic group $\mud$. We refer e.g.\ to \cite[I]{SGA3-1} for the notion of $\mud$-action on a quasi-projective $k$-scheme $X$. In the affine case $X=\Spec A$, this is the same thing as a $\Zz/2$-grading $A=A_0 \oplus A_1$ as $k$-algebra \cite[I, 4.7.3.1]{SGA3-1}. In general, the scheme $X$ is covered by affine $\mud$-invariant open subschemes \cite[V, \S5]{SGA3-1}.
\end{para}

\begin{para}
Let $X$ be a quasi-projective $k$-scheme with a $\mud$-action. An open or closed subscheme $Y$ of $X$ is called \emph{$\mud$-invariant} if its inverse images under the projection and the action $\mud \times X \to X$ coincide.
\end{para}

\begin{para}
\label{p:quotient}
Let $X$ be a quasi-projective $k$-scheme with a $\mud$-action. The equaliser of the projection and the action $\mud \times X \to X$ is represented by a finite surjective morphism $\varphi \colon X \to X/\mud$, called the \emph{quotient morphism} (see \cite[V, Th\'eor\`eme 4.1]{SGA3-1}). The $k$-scheme $X/\mud$ is quasi-projective by \cite[V, Remarque 5.1]{SGA3-1}. The $\Oc_{X/\mud}$-algebra $\Ac = \varphi_*\Oc_X$ admits a $\Zz/2$-grading $\Ac=\Ac_0 \oplus \Ac_1$, where $\Oc_{X/\mud} = \Ac_0$ (see e.g.\ \cite[(3.2.2)]{isol}).
\end{para}

\begin{para}
Let $X$ be a quasi-projective $k$-scheme with a $\mud$-action. The functor associating to a quasi-projective $k$-scheme $T$ with trivial $\mud$-action the set of $\mud$-equivariant morphisms $T \to X$ is represented by a $\mud$-invariant closed subscheme $X^\mud$ of $X$, called the \emph{fixed locus}. Its ideal $\Ic \subset \Oc_X$ is characterised by the fact that the ideal $\varphi_*\Ic \subset \Oc_{X/\mud}$ is generated by $\Ac_1$ (using the notation of \rref{p:quotient}).
\end{para}

It is possible to provide a more concrete definition of the notion of $\mud$-action, by distinguishing cases according to the characteristic of the base field:

\begin{proposition}
\label{prop:mud_char}
Let $X$ be a quasi-projective $k$-scheme.
\begin{enumerate}[(i)]
\item \label{prop:mud_char:1} Assume that $\carac k \neq 2$. Then a $\mud$-action on $X$ is the same thing as a $k$-morphism $\sigma \colon X \to X$ such that $\sigma^2=\id_X$. The fixed locus $X^\mud$ is the equaliser of the morphisms $\id_X$ and $\sigma$.

\item \label{prop:mud_char:2} Assume that $\carac k = 2$. Then a $\mud$-action on $X$ is the same thing as a $k$-derivation $D \colon \Oc_X \to \Oc_X$ satisfying $D \circ D=D$. The fixed locus $X^\mud$ is the vanishing locus of the section $D' \in H^0(X,\Omega_{X/k}^\vee)$ corresponding to $D$.
\end{enumerate}
\end{proposition}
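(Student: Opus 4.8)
The plan is to reduce everything to the affine case via the cover of $X$ by $\mud$-invariant affine open subschemes recalled above, where a $\mud$-action on $\Spec A$ is the datum of a $\Zz/2$-grading $A = A_0 \oplus A_1$. On such a piece I would write down the involution (resp.\ the idempotent derivation) explicitly in terms of the grading, check that the resulting assignment is a bijection, identify the two descriptions of the fixed locus with the ideal $\Ic$ generated by $\Ac_1$, and finally glue. It therefore suffices to carry out the affine computations and to verify that they are compatible with restriction to smaller invariant opens.

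For part (i), assume $\carac k \neq 2$. Given the grading I would set $\sigma(a_0 + a_1) = a_0 - a_1$ for $a_i \in A_i$; since $A_iA_j \subseteq A_{i+j}$ this is a ring endomorphism of $A$ with $\sigma^2 = \id$. Conversely an involution $\sigma$ decomposes $A$ into its $(\pm 1)$-eigenspaces $A_0 = \ker(\sigma - \id)$ and $A_1 = \ker(\sigma + \id)$ through the orthogonal idempotents $(\id \pm \sigma)/2$, and multiplicativity of $\sigma$ turns this into a grading; the two constructions are visibly inverse. For the fixed locus, the equaliser of $\id$ and $\sigma$ is defined by the ideal generated by the elements $a - \sigma(a) = 2a_1$; as $2$ is invertible this is the ideal generated by $A_1$, i.e.\ $\Ic$.

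For part (ii), assume $\carac k = 2$. Given the grading I would let $D$ be the projection of $A$ onto $A_1$. A direct check shows $D(fg) = f\,Dg + g\,Df$, the only discrepancy with the Leibniz rule being the term $2\,Df\,Dg$, which vanishes in characteristic two; and $D \circ D = D$ since $D$ is a projection. Conversely an idempotent derivation $D$ yields $A_0 = \ker D$ and $A_1 = \im D$, with $A = A_0 \oplus A_1$ by idempotence and $A_iA_j \subseteq A_{i+j}$ by the Leibniz rule (the inclusion $A_1A_1 \subseteq A_0$ again using $2 = 0$), and the assignments are inverse. Here $D$ is the global section $D'$ of $\Omega_{X/k}^\vee$ under the identification of sections of $\Omega_{X/k}^\vee$ with $k$-derivations of $A$; its vanishing locus is defined by the image of the associated map $\Omega_{X/k} \to \Oc_X$, which locally is the ideal generated by $D(A) = A_1$, i.e.\ $\Ic$.

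The affine manipulations above are routine; the delicate points are global. To recover an honest $\mud$-action from the local involutions (resp.\ derivations) I must exhibit a cover by invariant affine opens: for a derivation every open restricts and a quasi-projective $X$ already has an affine cover, whereas for an involution $\sigma$ I would, given a point $x$, choose an affine open $U$ containing the finite orbit $\{x,\sigma(x)\}$ and pass to the invariant affine open $U \cap \sigma(U)$, using separatedness of $X$. I would then check that the gradings obtained on overlaps agree, so that the locally defined $\sigma$, $D$ and ideals glue, and that the resulting bijection is independent of the cover; identifying the equaliser and the vanishing locus with the ideal $\Ic$ generated by $\Ac_1$ is what finally ties both concrete descriptions back to $X^\mud$. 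I expect this global bookkeeping, rather than any single computation, to be the main obstacle. It can in fact be bypassed by phrasing a $\mud$-action directly as a coaction $\Oc_X \to \Oc_X \otimes_k \Oc_{\mud}$: writing it as $\rho_0 \otimes 1 + \rho_1 \otimes \epsilon$ for the standard generator $\epsilon$ of $\Oc_{\mud}$, the action axioms force $\rho_0 + \rho_1 = \id$ and $\rho_1 \circ \rho_1 = \rho_1$, and the ring-map condition makes $\rho_1$ the derivation $D$ when $\carac k = 2$ and $\id - 2\rho_1$ the involution $\sigma$ otherwise, so that the global statement follows with no recourse to invariant covers.
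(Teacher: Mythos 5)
Your proof is correct, and its affine heart coincides with the paper's: the dictionary $A=A_0\oplus A_1 \leftrightarrow \sigma(a_0+a_1)=a_0-a_1$ in characteristic $\neq 2$ (via the idempotents $(\id\pm\sigma)/2$), the dictionary between the grading and the projection onto $A_1$ in characteristic $2$, and the identification of the equaliser, resp.\ the vanishing locus of $D'$, with the closed subscheme cut out by the ideal $A_1A$ are exactly the computations in the paper's proof. Where you genuinely differ is the global packaging. Your primary route --- reduce the equivalence itself to $\mud$-invariant affine opens (constructed for an involution as $U\cap\sigma(U)$ with $U$ affine containing the orbit $\{x,\sigma(x)\}$, using separatedness) and then glue --- is valid but heavier than necessary: the paper disposes of the globalisation in one line by observing that $\mud\times X=X\sqcup X$ when $\carac k\neq 2$ and $\mud\times X=\Spec(k[\varepsilon]/\varepsilon^2)\times X$ when $\carac k=2$, so that an action morphism $\mud\times X\to X$ \emph{is} globally the datum $\id_X\sqcup\sigma$, resp.\ the pair consisting of $\id_X$ and a section $D'\in H^0(X,\Omega_{X/k}^\vee)$; only the fixed-locus identification is then checked affine-locally. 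So the ``global bookkeeping'' you single out as the main obstacle simply does not arise on the paper's route.

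Your closing coaction argument is essentially the algebraic mirror of that observation, and it has the merit of making explicit the verification the paper leaves implicit (that the action axioms translate into $\sigma^2=\id$, resp.\ into the Leibniz rule and $D\circ D=D$). Two small caveats about it. First, your identities are consistent only if $\epsilon$ denotes the tautological character $u$ of $\Oc_{\mud}=k[u]/(u^2-1)$: then the counit gives $\rho_0+\rho_1=\id$, coassociativity gives $\rho_1\circ\rho_1=\rho_1$, and $\sigma=\rho_0-\rho_1=\id-2\rho_1$; if instead $\epsilon=u-1$, the counit forces $\rho_0=\id$, not $\rho_0+\rho_1=\id$. Second, for non-affine $X$ the passage from an action morphism $a\colon \mud\times X\to X$ to a coaction $\Oc_X\to\Oc_X\otimes_k\Oc_{\mud}$ is itself not free of global content: one should observe, say, that the shearing automorphism $(g,x)\mapsto(g,g\cdot x)$ of $\mud\times X$ identifies $a_*\Oc_{\mud\times X}$ with $\pi_*\Oc_{\mud\times X}=\Oc_X\otimes_k\Oc_{\mud}$ (or invoke the invariant affine cover after all), so the bypass still uses a mild, though standard, global input.
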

\begin{proof}
A $\mud$-action on $X$ is given by a $k$-morphism $\mud \times X \to X$ satisfying certain conditions. In case \eqref{prop:mud_char:1} the morphism $\mud \times X = X \sqcup X \to X$ is given by $\id_X \sqcup \sigma$, while in case \eqref{prop:mud_char:2} the morphism $\mud \times X = \Spec (k[\varepsilon]/\varepsilon^2) \times X \to X$ is given by the pair consisting of $\id_X$ and $D' \in H^0(X,\Omega_{X/k}^\vee)$. To verify the remaining statements, we may assume that $X = \Spec A$.

\eqref{prop:mud_char:1}: The correspondence between the grading $A=A_0 \oplus A_1$ and the involution $s \colon A \to A$ is given by the following formulas:
\[
A_0 = \ker(s - \id) = \im(s+\id), \quad A_1 = \ker(s +\id) = \im(s-\id), \quad s(a) = a_0 -a_1,
\]
where $a_0 \in A_0, a_1 \in A_1$ are the components of an arbitrary element $a\in A$. The coequaliser of the ring morphisms $\id_A$ and $s$ is the quotient of $A$ by the ideal generated $\im(s -\id)=A_1$, whence the given description of $X^\mud$.

\eqref{prop:mud_char:2}: The correspondence between the grading $A=A_0 \oplus A_1$ and the derivation $\partial \colon A \to A$ is given by the following formulas:
\[
A_0 = \ker \partial, \quad A_1 = \im \partial, \quad \partial(a) = a_1,
\]
where $a_1 \in A_1$ is the component of an arbitrary element $a\in A$. The section $D' \in H^0(X,\Omega_{X/k}^\vee)$ is given by the unique $A$-module morphism $\partial' \colon \Omega_{A/k} \to A$ satisfying $\partial = \partial' \circ d$, where $d \colon A \to \Omega_{A/k}$ is the universal derivation. The vanishing locus of $D'$ is the closed subscheme defined by the ideal $J$ generated by $\im\partial'$. Since the $A$-module $\Omega_{A/k}$ is generated by $\im d$, it follows that $J$ is generated by $\im(\partial' \circ d) = \im\partial = A_1$, whence the given description of $X^\mud$.
\end{proof}

We will repeatedly use the next lemma without explicit mention.
\begin{lemma}
\label{lemm:fix_sm}
Let $X$ be a smooth quasi-projective $k$-scheme with a $\mud$-action. Then the fixed locus $X^\mud$ is smooth over $k$.
\end{lemma}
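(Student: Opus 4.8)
The plan is to reduce to a local computation at a fixed point and to linearise the $\mud$-action there, using that the category of $\Zz/2$-graded vector spaces is semisimple; this is exactly the manifestation of the linear reductivity of $\mud$, and the reason $\mud$ rather than the constant group $\Zz/2$ is the right object in characteristic two. First I would note that the assertion is local on $X^\mud$: since $X$ is covered by affine $\mud$-invariant open subschemes $U$, and $X^\mud \cap U = U^\mud$, we may assume $X = \Spec A$ is affine, the $\mud$-action being a $\Zz/2$-grading $A = A_0 \oplus A_1$. Then $X^\mud = \Spec B$ with $B = A/I$ and $I = (A_1)$ the ideal generated by the odd part. As smoothness over $k$ is equivalent to geometric regularity, is insensitive to extension of the base field, and as the formation of $I$ (hence of $X^\mud$) commutes with base change, I would replace $k$ by an algebraic closure, so that every closed point of $X^\mud$ is rational and $k \subseteq A_0$.

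Next I would fix a closed point $y$ of $X^\mud$, corresponding to a maximal ideal $\mathfrak{m} \subset A$ with $I \subseteq \mathfrak{m}$ and residue field $k$. Since $A_1 \subseteq \mathfrak{m}$, the ideal $\mathfrak{m}$ is homogeneous, so the cotangent space $\mathfrak{m}/\mathfrak{m}^2$ inherits a $\Zz/2$-grading. Because $X$ is smooth of some dimension $n$ at $y$, any lift of a basis of $\mathfrak{m}/\mathfrak{m}^2$ is a regular system of parameters of $\Oc_{X,y}$; by semisimplicity of graded vector spaces I may take the lift to consist of homogeneous elements $t_1, \ldots, t_n \in \mathfrak{m}$, say with $t_1, \ldots, t_d$ odd and the others even.

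The heart of the argument is to upgrade the Cohen isomorphism $\widehat{\Oc}_{X,y} \cong k[[t_1, \ldots, t_n]]$ to one compatible with the $\mud$-action, where $\mud$ acts linearly on $k[[T_1, \ldots, T_n]]$ with $T_i$ homogeneous of the same parity as $t_i$. Concretely, the continuous $k$-algebra homomorphism $k[[T_1,\ldots,T_n]] \to \widehat{\Oc}_{X,y}$ sending $T_i \mapsto t_i$ is surjective between regular local rings of dimension $n$, hence an isomorphism, and it intertwines the two $\mud$-actions because it does so on the generators. I would treat both characteristics at once through the grading: in characteristic not two the action is the involution $a_0 + a_1 \mapsto a_0 - a_1$, so $t_i \mapsto \pm t_i$; in characteristic two it is the idempotent derivation, which is the projection onto $A_1$, so that the derivation sends $t_i$ to $t_i$ or to $0$ according to its parity — in either case the coordinate map is equivariant. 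Transporting $I = (A_1)$ through this isomorphism yields the ideal generated by the odd part of $k[[T_1, \ldots, T_n]]$, which equals $(T_1, \ldots, T_d)$ (it contains the odd parameters $t_1, \ldots, t_d$, and conversely every odd monomial is divisible by some odd variable). Hence $\widehat{\Oc}_{X^\mud, y} \cong k[[T_{d+1}, \ldots, T_n]]$ is regular, so $\Oc_{X^\mud,y}$ is regular; as this holds at every closed point over the algebraically closed base, $X^\mud$ is regular, hence smooth, and descending through the field extension gives smoothness over the original $k$.

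I expect the main obstacle to be the equivariant linearisation of the third paragraph, and especially carrying it out uniformly in characteristic two, where the $\mud$-action is a derivation rather than an automorphism: one must verify that the coordinate isomorphism intertwines the derivation on $k[[T_1,\ldots,T_n]]$ with the one on $\widehat{\Oc}_{X,y}$, which reduces to the compatibility on generators together with the fact that an algebra map agreeing with a given pair of derivations on a set of generators agrees with them on the whole ring. The remaining points — homogeneity of $\mathfrak{m}$, the identification of the transported ideal, and the descent of smoothness through the field extension and through completion — are routine.
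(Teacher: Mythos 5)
Your proof is correct, but there is no internal argument to compare it with: for this lemma the paper offers no proof at all, only the citation \cite[Lemma~3.5.2]{isol}. Your route --- pass to a fixed closed point over an algebraic closure, extend the grading to the completion, choose a homogeneous regular system of parameters, and read off $\widehat{\Oc}_{X^\mud,y}\simeq k[[T_{d+1},\dots,T_n]]$ --- is the standard linearisation argument, and in substance it is the proof behind the cited result, which treats arbitrary diagonalisable groups in exactly this way (linear reductivity entering, as you say, through the semisimplicity of the graded category). Your decision to work uniformly with the $\Zz/2$-grading rather than case-splitting between the involution and the idempotent derivation matches the paper's own point of view \rref{prop:mud_char}, which identifies affine $\mud$-actions with gradings and $X^\mud$ with $\Spec A/(A_1)$ in both characteristics; it also lets your equivariance worry in the third paragraph be bypassed entirely, since it suffices to check that the coordinate isomorphism is \emph{graded}, which follows because it is graded on the dense subring of polynomials and each graded piece of the target is topologically closed. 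Two steps you label routine do deserve one line each: (a) the grading passes to $\widehat{\Oc}_{X,y}$ because $\mathfrak{m}$, hence every $\mathfrak{m}^n$, is homogeneous and the grading group $\Zz/2$ is finite, so $\widehat{A}=\widehat{A}_0\oplus\widehat{A}_1$; and (b) the transported ideal is the one generated by the odd part of the completion because $I\widehat{A}\subseteq(\widehat{A}_1)$ is clear, while conversely $I\widehat{A}$, being an ideal of a complete noetherian local ring, is closed, and every element of $\widehat{A}_1$ is a limit of elements of $A_1\subseteq I$. With these observations made explicit, your argument is complete.
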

\begin{proof}
See e.g.\ \cite[Lemma~3.5.2]{isol}.
\end{proof}

\begin{lemma}
\label{lemm:inv_blowup}
Let $X$ be a quasi-projective $k$-scheme with a $\mud$-action. The blowup $Y$ of $X^\mud$ in $X$ inherits a $\mud$-action whose fixed locus $Y^\mud$ is the exceptional divisor.
\end{lemma}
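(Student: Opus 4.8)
The plan is to reduce everything to an affine, $\Zz/2$-graded computation. Both the blowup and the fixed locus are local on $X$, and $X$ is covered by affine $\mud$-invariant open subschemes, so I may assume $X=\Spec A$ with $A=A_0\oplus A_1$ a $\Zz/2$-graded $k$-algebra. In this situation $X^\mud$ is the closed subscheme defined by the homogeneous ideal $\Ic = A_1 A$ generated by the odd part, and I denote by $E$ the exceptional divisor of $Y$, whose ideal sheaf is $\Ic\Oc_Y$ by the very construction of the blowup.

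For the inheritance of the action I would argue through functoriality of blowing up together with flat base change, which has the advantage of being characteristic-free (this matters since in characteristic two $\mud$ is infinitesimal and one cannot argue pointwise on group elements). Writing $a,\mathrm{pr}\colon \mud\times X\to X$ for the action and the projection, invariance of $X^\mud$ means precisely $a^{-1}(X^\mud)=\mathrm{pr}^{-1}(X^\mud)=\mud\times X^\mud$. As $\mathrm{pr}$ is flat, blowing up commutes with it, giving $\mathrm{Bl}_{\mud\times X^\mud}(\mud\times X)=\mud\times Y$; functoriality of the blowup applied to $a$ then produces a morphism $\mud\times Y\to Y$ lifting $a$, and the uniqueness built into that functoriality shows this morphism is a $\mud$-action.

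The heart of the matter is the identity $Y^\mud=E$, which I would establish on the standard affine charts of the blowup. Choose homogeneous generators $f_1,\dots,f_r\in A_1$ of $\Ic$ (possible since $\Ic$ is generated by the odd part and $A$ is of finite type). On the chart $U_i=\Spec B_i$ with $B_i=A[f_1/f_i,\dots,f_r/f_i]\subset A_{f_i}$, the ring $B_i$ is $\Zz/2$-graded, being a graded subring of the graded localisation $A_{f_i}$, with $f_i$ odd and each ratio $f_j/f_i$ even. By the fundamental property of blowups, $\Ic B_i=f_iB_i$, so $E\cap U_i=V(f_i)$. On the other hand $Y^\mud\cap U_i$ is cut out by the ideal of $B_i$ generated by its odd part $(B_i)_1$. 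Every odd monomial in the generators $A_0,A_1,f_j/f_i$ involves an odd number of factors from $A_1$, and extracting one such factor exhibits it as an element of $A_1(B_i)_0$; hence $(B_i)_1=A_1(B_i)_0$ and the ideal it generates is $A_1B_i=\Ic B_i=f_iB_i$. Thus $Y^\mud\cap U_i$ and $E\cap U_i$ have the same ideal, and gluing over the charts yields $Y^\mud=E$.

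I expect the main obstacle to be securing the \emph{scheme-theoretic} equality rather than a merely set-theoretic one. Set-theoretically the two inclusions are transparent: $\mud$ acts trivially on $E$ because the normal directions carry the sign character, so scaling by $-1$ is trivial on the projectivisation, giving $E\subseteq Y^\mud$; and $Y\setminus E\cong X\setminus X^\mud$ equivariantly, where the action is fixed-point-free since the fixed locus of $X$ is exactly $X^\mud$, giving $Y^\mud\subseteq E$. Promoting this to an equality of closed subschemes is precisely what the local identity $(B_i)_1B_i=\Ic B_i=f_iB_i$ accomplishes. A secondary point requiring care is to keep the whole argument uniform in the characteristic, for which the graded (Rees-algebra) viewpoint, rather than the involution viewpoint valid only when $\carac k\neq 2$, is the right language.
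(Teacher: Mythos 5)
Your proposal is correct and takes essentially the same route as the paper: the $\mud$-action on $Y$ is obtained from the universal property (in your phrasing, flat base change plus functoriality) of the blowup, and the scheme-theoretic equality $Y^\mud=E$ is checked on affine charts by showing that the odd part of the chart ring generates exactly the exceptional ideal, your identity $(B_i)_1B_i=A_1B_i=f_iB_i$ being the finite-generator version of the paper's computation $I^n\cap A_{1+r(n)}=(A_1)^nA_1$ on its charts $S_{(s)}$, $s\in A_1$. The one point you leave implicit --- that the charts are $\mud$-invariant with the grading induced from $A_{f_i}$ --- is treated in the paper with the same brief appeal to the uniqueness in the blowup's universal property, so nothing essential is missing.
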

\begin{proof}
Let $E$ be the exceptional divisor in $Y$. Denote by $a \colon \mud \times X \to X$ the action morphism. Since the closed subscheme $X^\mud$ of $X$ is $\mud$-invariant, its inverse image under the composite $\mud \times Y \to \mud \times X \xrightarrow{a} X$ is the closed subscheme $\mud \times E$. The existence of the morphism $\mud \times Y \to Y$ and the fact that it is a group action then follow from the universal property of the blowup.

To check that $Y^\mud=E$, we may assume that $X=\Spec A$. Let $I$ be the ideal of $A$ generated by $A_1$. For $s\in A_1$, consider the $A$-algebra $S_{(s)} = \{as^{-n}|a\in I^n, n \in \Nn\} \subset A[s^{-1}]$, with its induced $\Zz/2$-grading. The scheme $Y$ is covered by the open subschemes $D(s) = \Spec (S_{(s)})$ for $s\in A_1$, and it follows from the universal property of the blowup that the immersions $D(s) \to Y$ are $\mud$-equivariant. Let $s\in A_1$. Any element $x \in S_{(s)}$ homogeneous of degree $1 \in \Zz/2$ may be written as $as^{-n}$ where $n\in \Nn$ and $a \in I^n \cap A_{1+r(n)}$ (we denote by $r\colon \Zz \to \Zz/2$ the reduction modulo $2$). But 
\[
I^n \cap A_{1+r(n)} = ((A_1)^nA) \cap A_{1+r(n)}= (A_1)^nA_1 \subset I^{n+1},
\]
so that $as^{-n-1} \in S_{(s)}$, hence $x \in s S_{(s)}$. Thus $D(s)^{\mud} = E \cap D(s)$, and $Y^\mud = E$.
\end{proof}

\begin{lemma}
\label{lemm:inv_cod1}
Let $Y$ be a quasi-projective $k$-scheme with a $\mud$-action such that $Y^\mud \to Y$ is an effective Cartier divisor. Denote by $f\colon Y \to Z=Y/\mud$ the quotient morphism.
\begin{enumerate}[(i)]
\item \label{lemm:inv_cod1:1} The $\Oc_Z$-module $\Lc = \coker (\Oc_Z \to f_*\Oc_Y)$ is locally free of rank one.

\item \label{lemm:inv_cod1:2} There is a canonical isomorphism $f^*\Lc^{\vee} \simeq \Oc_Y(Y^\mud)$.

\item \label{lemm:inv_cod1:3} The morphism $Y^\mud \to Z$ is an effective Cartier divisor whose ideal is isomorphic to $\Lc^{\otimes 2}$.

\item \label{lemm:inv_cod1:4} If $Y$ is smooth over $k$, then so is $Z$.
\end{enumerate}
\end{lemma}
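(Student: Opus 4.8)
The plan is to reduce everything to commutative algebra over the graded ring $A = f_*\Oc_Y$. All four assertions are local on $Z$, so I may assume $Y=\Spec A$ is affine and $\mud$-invariant; then the $\mud$-action is a $\Zz/2$-grading $A=A_0\oplus A_1$, the quotient is $Z=\Spec A_0$, and $f$ is the finite morphism induced by $A_0\hookrightarrow A$ (finiteness and surjectivity by \rref{p:quotient}). Thus $f_*\Oc_Y=A$, so $\Lc$ is the $A_0$-module $A_1=\coker(A_0\hookrightarrow A)$; the ideal of $Y^\mud$ in $Y$ is $I=A_1 A=A_1^2\oplus A_1$ (degree $0$, resp.\ $1$, part), and $A/I=A_0/A_1^2$, so $Y^\mud=\Spec(A_0/A_1^2)$ maps to $Z$ through the ideal $A_1^2\subset A_0$.

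The heart of the matter is \rref{lemm:inv_cod1:1}. To prove $A_1$ is locally free of rank one I would localise at a prime of $A_0$ and assume $A_0$ local with maximal ideal $\mathfrak m_0$ and residue field $\kappa$. If $A_1^2=A_0$, then some product of two elements of $A_1$ is a unit, so some $a\in A_1$ is a unit of $A$, and a degree count gives $A_1=A_0a\cong A_0$. If instead $A_1^2\subset\mathfrak m_0$, then $A\otimes_{A_0}\kappa=\kappa\oplus\overline{A_1}$ has $\overline{A_1}$ as a square-zero ideal, hence is local; as $A$ is finite over the local ring $A_0$, it follows that $A$ is itself local, with graded maximal ideal. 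Now the hypothesis enters: $I$ is a principal graded ideal, so graded Nakayama yields a homogeneous generator. A generator of degree $0$ lies in $A_1^2$ and forces $A_1=A_1^3\subset\mathfrak m_0A_1$, whence $A_1=0$ and $I=0$, which is absurd since $I$ is generated by a nonzerodivisor. Hence the generator is some $t\in A_1$, necessarily a nonzerodivisor, and $A_1=A_0t\cong A_0$. Producing this homogeneous nonzerodivisor generator — in particular excluding the degree-$0$ alternative by using that $Y^\mud$ is a genuine Cartier divisor — is the step I expect to be the main difficulty.

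Granting \rref{lemm:inv_cod1:1}, the remaining parts follow from the fact that, locally, $A=A_0\oplus A_0t$ is free of rank two over $A_0$, with $t$ (hence $u=t^2\in A_0$) a nonzerodivisor. For \rref{lemm:inv_cod1:2}, the multiplication map $A\otimes_{A_0}A_1\to A$ is canonical and $A$-linear, has image $I$, and is an isomorphism onto $I$ by the local computation; thus $f^*\Lc\simeq I=\Oc_Y(-Y^\mud)$, and dualising gives \rref{lemm:inv_cod1:2}. For \rref{lemm:inv_cod1:3}, the multiplication map $\Lc^{\otimes 2}=A_1\otimes_{A_0}A_1\to A_0$ is likewise a canonical isomorphism onto the ideal $A_1^2=uA_0$; since $u$ is a nonzerodivisor this ideal is invertible, so $Y^\mud\to Z$ is an effective Cartier divisor with ideal $\Lc^{\otimes 2}$.

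Finally, for \rref{lemm:inv_cod1:4}, \rref{lemm:inv_cod1:1} shows $f_*\Oc_Y=A_0\oplus A_1$ is locally free of rank two, so $f$ is finite flat, and it is surjective by \rref{p:quotient}; hence $f$ is faithfully flat, and remains so after the base change $-\otimes_k\overline{k}$. If $Y$ is smooth over $k$ then $Y\times_k\overline{k}$ is regular, and since regularity descends along faithfully flat morphisms of Noetherian schemes, $Z\times_k\overline{k}$ is regular as well. Therefore $Z$ is geometrically regular, i.e.\ smooth over $k$.
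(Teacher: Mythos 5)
Your proof is correct, and for the crucial part \dref{lemm:inv_cod1}{lemm:inv_cod1:1} it takes a genuinely different route from the paper's. The paper argues globally on an invariant affine chart: writing the ideal of $Y^\mud$ as $aA$ with $a$ a nonzerodivisor, decomposing $a=a_0+a_1$, writing $a_1=ua$ and $a_0=wa^2$, and cancelling $a$ to get $1=wa+u$, it covers $Y$ by the invariant opens $D(f)$ for $f\in A_1$ and $D(u)$, on each of which $A_1$ is visibly free of rank one (generated by the unit $f\in A_1$, resp.\ by the nonzerodivisor $a_1=ua$). You instead localise at a prime of $A_0$ and split according to whether $A_1^2$ is the unit ideal; in the nontrivial case you show $A$ is local with graded maximal ideal, observe that the invertible ideal $I$ is then principal, and apply graded Nakayama to obtain a homogeneous generator, excluding a degree-zero generator $c$ via $A_1=cA_1\subset\mathfrak{m}_0A_1$ and Nakayama again. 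Both arguments are complete: yours is the more standard commutative-algebra reduction, at the cost of invoking freeness of invertible modules over local rings, Nakayama for the finite $A_0$-module $A_1$ (finiteness of $f$ comes from \rref{p:quotient}), and the fact that local freeness of the finitely presented module $A_1$ may be checked at primes; the paper's explicit covering trick keeps everything global and elementary and never needs $A$ to be local. For \dref{lemm:inv_cod1}{lemm:inv_cod1:2} and \dref{lemm:inv_cod1}{lemm:inv_cod1:3} the two proofs essentially coincide (canonical multiplication, resp.\ adjunction, maps shown to be isomorphisms onto the relevant ideals, using the rank-one local computation). For \dref{lemm:inv_cod1}{lemm:inv_cod1:4} the paper cites descent of smoothness along the faithfully flat $f$ directly from \cite[(17.7.7)]{ega-4-4}, whereas you base-change to $\overline{k}$ and use descent of regularity along faithfully flat morphisms together with ``geometrically regular equals smooth''; this is an equivalent, marginally more pedestrian, route.
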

\begin{proof}
As recalled in \rref{p:quotient}, there is a $\Zz/2$-grading of the $\Oc_Z$-algebra $f_*\Oc_Y=\Ac = \Ac_0 \oplus \Ac_1$ such that $\Oc_Z=\Ac_0$. Thus $\Lc = \Ac_1$. We let $\Ic = \Oc_Y(-Y^\mud)$.

\eqref{lemm:inv_cod1:1} : We may replace $Y$ with any cover by $\mud$-invariant open subschemes. In particular, we may assume that $Y=\Spec A$ and that the closed subscheme $Y^\mud$ of $Y$ is defined by the ideal $I=A_1A = aA$ of $A$, for some nonzerodivisor $a\in A$. Denote by $a_0 \in A_0$ and $a_1 \in A_1$ the components of $a$, and write $a_1=ua$ with $u \in A$. Since $a_0 =a -a_1 \in I \cap A_0 \subset I^2$, we may find $w \in A$ such that $a_0 = wa^2$. Then $a=wa^2 + ua$, and since $a$ is a nonzerodivisor in $A$, we have $1=wa +u \in A_1A+uA$. Thus $Y$ is covered by the open subschemes $D(f) = \Spec (A[f^{-1}])$ for $f\in A_1$ and $D(u) = \Spec (A[u^{-1}])$. The subschemes $D(f)$ are $\mud$-invariant. So is $D(u)$, because it is the locus in $Y$ where the $\mud$-invariant closed subschemes $Y^\mud$ and $\Spec (A/a_1A)$ coincide (alternatively, one may check directly that the ideal $uA$ of $A$ is homogeneous). Therefore we may assume either that $A_1$ contains an element $f \in A^\times$, or that $u\in A^\times$.

If $f \in A_1 \cap A^\times$, then $f^{-1} \in A_1$, and $x \mapsto fx$ induces an isomorphism of $A_0$-modules $A_0 \to A_1$, proving that $A_1$ is free or rank one.

Assume that $u\in A^\times$. Since $a$ is a nonzerodivisor in $A$, so is $a_1=ua$. Thus the morphism of $A_0$-modules $A_0 \to A$ given by $x \mapsto a_1 x$ is injective; its image $a_1 A_0 = (a_1 A)\cap A_1 = I\cap A_1 = A_1$ is a free $A_0$-module of rank one.

\eqref{lemm:inv_cod1:2} : The morphism of $\Oc_Y$-modules $\alpha \colon f^*\Lc=f^*\Ac_1 \to \Ic$ adjoint to the inclusion $\Ac_1 \subset f_*\Ic$ is surjective because $f_*\alpha \colon f_*f^*\Ac_1 \to f_*\Ic$ is surjective, since the ideal $f_*\Ic$ of $\Ac$ is generated by $\Ac_1$. The morphism $\alpha$ must be an isomorphism, because its source and target are locally free modules of rank one by \eqref{lemm:inv_cod1:1}. 

\eqref{lemm:inv_cod1:3} : The affine morphism $Y^\mud \to Z$ is given by the morphism of $\Oc_Z$-algebras $\Oc_Z=\Ac_0 \to \Ac \to \Ac/f_*\Ic$. This morphism is surjective with kernel $f_*\Ic \cap \Ac_0 = (\Ac_1)^2$, the image of the morphism of $\Oc_Z$-modules $\beta\colon \Lc^{\otimes 2} = (\Ac_1)^{\otimes 2} \to \Ac_0$ induced by the $\Zz/2$-graded $\Ac_0$-algebra structure on $\Ac$. Since the $\Oc_Z$-module $\Lc^{\otimes 2}$ is locally free of rank one by \eqref{lemm:inv_cod1:1}, in order to prove \eqref{lemm:inv_cod1:3}, it will suffice to prove that the $\Oc_Z$-module $(\Ac_1)^2$ is locally free of rank one (then $\beta$ will have to be an isomorphism). To do so, we may assume that $Y=\Spec A$ and moreover, in view of \eqref{lemm:inv_cod1:1}, that $A_1=lA_0$, for some $l\in A_1$. Then the ideal $A_1A$ of $A$ is invertible by assumption, hence its generator $l$ must be a nonzerodivisor in $A$. Then $l^2 \in A_0$ is a nonzerodivisor in $A$, hence in its subring $A_0$. Thus $(A_1)^2=l^2A_0$ is an invertible ideal of $A_0$.

\eqref{lemm:inv_cod1:4} : By \eqref{lemm:inv_cod1:1}, the morphism $f \colon Y \to Z$ is faithfully flat, so that the statement follows from \cite[(17.7.7)]{ega-4-4}.
\end{proof}

\section{Cobordism and fixed locus}
\numberwithin{theorem}{subsection}
\numberwithin{lemma}{subsection}
\numberwithin{proposition}{subsection}
\numberwithin{corollary}{subsection}
\numberwithin{example}{subsection}
\numberwithin{notation}{subsection}
\numberwithin{definition}{subsection}
\numberwithin{remark}{subsection}

\label{sect:parity}
\subsection{Parity of Chern numbers}
\begin{lemma}
\label{lemm:pure}
Let $Y$ be a smooth projective $k$-scheme with a $\mud$-action such that $Y^\mud$ has pure codimension one in $Y$. Then for any $m \geq 0$
\[
\lc c_1(\Oc_Y(Y^\mud))^m \rc = 0 \in \Ld.
\]
\end{lemma}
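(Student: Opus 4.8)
The plan is to apply Vishik's formula \rref{lemm:Vishik} to the quotient morphism $f \colon Y \to Z = Y/\mud$, carrying out the whole computation in the oriented cohomology theory $\Cht$ (with $p = 2$), which satisfies $\fund{\Cht} = \Ld$; here $\lc - \rc$ and $c_1$ denote the pushforward and the first Chern class in this theory.

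First I would put us in the situation of \rref{lemm:inv_cod1}. Since $Y$ is smooth and $Y^\mud$ is smooth by \rref{lemm:fix_sm}, the closed immersion $Y^\mud \to Y$ is regular; as it has pure codimension one, it is an effective Cartier divisor. Thus \rref{lemm:inv_cod1} applies to the quotient morphism $f \colon Y \to Z$: the $k$-scheme $Z$ is smooth by \dref{lemm:inv_cod1}{lemm:inv_cod1:4}, the $\Oc_Z$-module $\Lc = \coker(\Oc_Z \to f_*\Oc_Y)$ is locally free of rank one by \dref{lemm:inv_cod1}{lemm:inv_cod1:1}, and there is a canonical isomorphism $\Oc_Y(Y^\mud) \simeq f^*\Lc^\vee$ by \dref{lemm:inv_cod1}{lemm:inv_cod1:2}. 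Moreover $Z$ is projective, being quasi-projective and proper over $k$ (the latter because $f$ is finite, hence proper, and surjective, while $Y$ is proper over $k$).

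The key step is to compute $f_*(1) \in \Cht(Z)$. Since $f$ is finite with $f_*\Oc_Y$ locally free of rank two, its fibre over any generic point of $Z$ is the spectrum of a two-dimensional algebra, so \rref{lemm:Vishik}, applied to the theory $\Cht$, yields
\[
f_*(1) = \frac{[2]_{\Cht}(c_1(\Lc^\vee))}{c_1(\Lc^\vee)} \in \Cht(Z).
\]
By \rref{lemm:FGL_Lp} the power series $[2]_{\Cht}(x)$ is identically zero, whence $f_*(1) = 0$.

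Finally I would conclude by factoring the structural morphism of $Y$ as $Y \xrightarrow{f} Z \xrightarrow{q} \Spec k$ and combining the isomorphism $\Oc_Y(Y^\mud) \simeq f^*\Lc^\vee$, the functoriality of Chern classes, and the projection formula \dref{def:oct}{def:oct:PF}:
\[
\lc c_1(\Oc_Y(Y^\mud))^m \rc = q_* f_*\big( (f^* c_1(\Lc^\vee))^m \big) = q_*\big( c_1(\Lc^\vee)^m \cdot f_*(1) \big) = 0.
\]
The computation is essentially formal once the framework is in place; the only points demanding attention are the verification that $f$ meets the hypotheses of Vishik's formula and that $Z$ is smooth and projective, so that every pushforward above is defined. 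The substance of the argument is the vanishing $[2]_{\Cht} = 0$, which is precisely the relation cutting out $\Ld$ as a quotient of $\Laz$.
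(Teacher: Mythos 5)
Your proof is correct and is essentially the paper's own argument: quotient morphism $f\colon Y \to Z=Y/\mud$, Vishik's formula \rref{lemm:Vishik} combined with $[2]_{\Cht}(x)=0$ from \rref{lemm:FGL_Lp} to get $f_*(1)=0$, then \dref{lemm:inv_cod1}{lemm:inv_cod1:2} and the projection formula \dref{def:oct}{def:oct:PF} to conclude. Your explicit verifications that $Y^\mud\to Y$ is a Cartier divisor, that $f_*\Oc_Y$ has rank two (via \dref{lemm:inv_cod1}{lemm:inv_cod1:1}), and that $Z$ is projective are details the paper leaves implicit, and they check out.
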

\begin{proof}
Let $f\colon Y \to Z=Y/\mud$ be the quotient morphism. Then $Z \in \Sm_k$ by \dref{lemm:inv_cod1}{lemm:inv_cod1:4} (and \rref{lemm:fix_sm}). As $[2]_{\Cht}(x) =0 \in \Ld[[x]]$ by \rref{lemm:FGL_Lp}, applying \rref{lemm:Vishik} yields $f_*[Y] = 0 \in \Cht(Z)$. Since $\Oc_Y(Y^\mud)$ is the pullback of a line bundle on $Z$ by \dref{lemm:inv_cod1}{lemm:inv_cod1:1} and \dref{lemm:inv_cod1}{lemm:inv_cod1:2}, the projection formula \dref{def:oct}{def:oct:PF} implies that $f_*(c_1(\Oc_Y(Y^\mud))^m) = 0 \in \Cht(Z)$, and the lemma follows by pushing forward along $Z \to \Spec k$.
\end{proof}

\begin{remark}
Let $X$ be a connected smooth projective $k$-scheme with a nontrivial $\mud$-action. Lemma \rref{lemm:pure} (for $m=0,1$) implies that if one Chern number of $X$ or of $X^\mud$ is odd, then $X^\mud$ must have a component of dimension $\leq \dim X -2$.
\end{remark}

\begin{theorem}
\label{th:X_PN_L2}
Let $X$ be a smooth projective $k$-scheme with a $\mud$-action, and $N$ the normal bundle to the immersion of the fixed locus $X^\mud \to X$. Then in $\Laz_2$
\[
\lc \Pp(N \oplus 1) \rc = \lc X \rc \quad \text{ and } \quad \lc c_1(\Oc_{\Pp(N\oplus 1)}(1))^m \rc=0 \quad \text{ for $m\geq 1$ }.
\]
\end{theorem}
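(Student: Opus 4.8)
The plan is to run the deformation-to-the-normal-bundle formula \rref{lemm:deformation} inside the theory $\Hh = \Cht$, whose coefficient ring $\Cht(\Spec k) = \Fd[\bb]$ has characteristic two and whose subring $\fund{\Cht}$ is exactly $\Ld$. I would apply \rref{lemm:deformation} to the closed immersion $T = X^\mud \to X$, whose normal bundle is $N$. Let $Y$ be the blowup of $X^\mud$ in $X$; since $X^\mud$ is smooth, $Y$ is again smooth and projective, and its exceptional divisor is $\Pp(N)$. By \rref{lemm:inv_blowup}, $Y$ carries a $\mud$-action whose fixed locus $Y^\mud$ is precisely this exceptional divisor. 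In particular $Y^\mud$ has pure codimension one in $Y$, and $\Oc_Y(\Pp(N)) = \Oc_Y(Y^\mud)$, so the class $\eta = c_1(\Oc_Y(\Pp(N)))$ appearing in \rref{lemm:deformation} coincides with $c_1(\Oc_Y(Y^\mud))$.

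The first input is the vanishing of the left-hand side of the deformation formula. Given any $g \in \Cht(\Spec k)[[x]]$, the element $g(\eta) \in \Cht(Y)$ is a finite $\Cht(\Spec k)$-linear combination of powers $\eta^i$ by nilpotence \rref{p:nilp}. Pushing forward to $\Spec k$ and using the projection formula \dref{def:oct}{def:oct:PF} to pull the coefficients out, \rref{lemm:pure} (which applies since $Y^\mud$ is of pure codimension one) gives $\lc \eta^i \rc = 0 \in \Ld$ for every $i$, and hence $\lc g(\eta) \rc = 0$.

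The decisive simplification comes from the formal group law. By \rref{lemm:FGL_Lp} we have $[2]_{\Cht}(x) = 0$, that is $F_{\Cht}(x,x) = 0$; by uniqueness of the formal inverse (\rref{p:formal_mult}) this forces $[-1]_{\Cht}(x) = x$. Consequently $[-1]_{\Cht}(\zeta)/\zeta = 1$ in the sense of \rref{p:frac}, where $\zeta = c_1(\Oc_{\Pp(N\oplus 1)}(-1))$, and the formula of \rref{lemm:deformation} collapses to
\[
0 = \lc g(\eta) \rc = g(0)\lc X \rc + \lc g(\zeta) \rc \in \Ld,
\]
so that $\lc g(\zeta) \rc = g(0)\lc X \rc$ (recall $\Ld$ has characteristic two). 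Taking $g = 1$ gives $\lc \Pp(N\oplus 1) \rc = \lc 1 \rc = \lc X \rc$, the first equality. For the second I would invoke $[-1]_{\Cht}(x) = x$ once more to identify $c_1(\Oc_{\Pp(N\oplus 1)}(1)) = [-1]_{\Cht}(\zeta) = \zeta$; then taking $g = x^m$ with $m \geq 1$ (so $g(0) = 0$) yields $\lc c_1(\Oc_{\Pp(N\oplus 1)}(1))^m \rc = \lc \zeta^m \rc = 0$.

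The one genuinely load-bearing step, and the one I would verify most carefully, is the identity $[-1]_{\Cht}(x) = x$: everything else is bookkeeping around \rref{lemm:deformation} and \rref{lemm:pure}, but this identity is exactly what trivialises both the correction factor $[-1]_{\Cht}(\zeta)/\zeta$ in the deformation formula and the discrepancy between $\Oc(1)$ and $\Oc(-1)$. I would also confirm that \rref{lemm:pure} applies without a connectedness hypothesis on $X^\mud$: when the fixed locus has components of several codimensions the bundle $N$ has varying rank, yet the exceptional divisor remains a genuine Cartier divisor, so $Y^\mud$ is of pure codimension one as the lemma requires.
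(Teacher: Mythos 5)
Your proposal is correct and follows essentially the same route as the paper's own proof: blow up $X^\mud$ in $X$, identify the exceptional divisor with $Y^\mud=\Pp(N)$ via \rref{lemm:inv_blowup} so that \rref{lemm:pure} kills all classes $\lc \eta^m\rc$, deduce $[-1]_{\Cht}(x)=x$ from $[2]_{\Cht}(x)=0$ (\rref{lemm:FGL_Lp}) to collapse the correction factor and identify $c_1(\Oc_{\Pp(N\oplus 1)}(1))$ with $\zeta$, and conclude from \rref{lemm:deformation} with $g=x^m$, $m\geq 0$. Your extra checks (finiteness of $g(\eta)$ via \rref{p:nilp} with the projection formula, and purity of the codimension of $Y^\mud$ even when $X^\mud$ has components of varying codimension) are exactly the points the paper leaves implicit, and they hold as you state.
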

\begin{proof}
Let $Y$ be the blowup of $X^\mud$ in $X$. Then $Y^\mud = \Pp(N)$ has pure codimension one in $Y$ by \rref{lemm:inv_blowup}. It follows from \rref{lemm:FGL_Lp} that $[-1]_{\Cht}(x)=x \in \Ld[[x]]$, hence $c_1(\Oc_{\Pp(N\oplus 1)}(-1)) = c_1(\Oc_{\Pp(N\oplus 1)}(1))$ in $\Cht(\Pp(N\oplus 1))$. The statement now follows from \rref{lemm:deformation} (with $g=x^m$ for $m\geq 0$) and \rref{lemm:pure}.
\end{proof}

As a sample application of \rref{th:X_PN_L2}, we deduce an algebraic version of a theorem of Conner--Floyd \cite[(25.1)]{CF-book-1st}.

\begin{corollary}
\label{cor:trivial_normal}
Let $X$ be a smooth projective $k$-scheme with a $\mud$-action, and $N$ the normal bundle to the immersion of the fixed locus $X^\mud \to X$. Assume that $X^\mud$ contains no component of $X$, and that $c_i^{\CH}(N) \in 2\CH(X^\mud)$ for all $i>0$. Then every Chern number of $X$ or of $X^\mud$ is even.
\end{corollary}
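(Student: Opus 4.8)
The assertion is that $\lc X \rc = 0$ and $\lc X^\mud \rc = 0$ in $\Ld$, and the plan is to read off both vanishings from the relations of \rref{th:X_PN_L2} once the left-hand sides occurring there are evaluated by means of \rref{lemm:projbundle_trivial}, applied to the theory $\Ch = \CH/2$ (so that the twisted theory $\Ht$ is $\Cht$, with $\fund{\Cht} = \Ld$). Since both $\lc X\rc$ and $\lc X^\mud\rc$ are additive over the connected components of $X$, and the hypotheses pass to each such component, I would first reduce to the case where $X$ is connected, of dimension $n$. As Chern classes are compatible with the morphism $\CH \to \Ch$, the assumption $c_i^{\CH}(N) \in 2\CH(X^\mud)$ for $i > 0$ becomes $c_i^{\Ch}(N) = 0 \in \Ch(X^\mud)$ for $i > 0$, and hence $c_i^{\Ch}(N \oplus 1) = 0$ for $i > 0$ by the Whitney formula \rref{p:Whitney}.

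Next I would decompose $X^\mud = \bigsqcup_j W_j$ into its connected components. Because $X^\mud$ contains no component of $X$, each $W_j$ has some codimension $c_j \geq 1$ in $X$, so that $N|_{W_j}\oplus 1$ has rank $c_j+1$ and vanishing higher Chern classes in $\Ch$. Applying \rref{lemm:projbundle_trivial} over each $W_j$ and pushing forward along $W_j \to \Spec k$ via the projection formula \dref{def:oct}{def:oct:PF}, I obtain, for every $m \geq 0$,
\[
\lc c_1^{\Cht}(\Oc_{\Pp(N\oplus 1)}(1))^m \rc = \sum_j \lc \Pp^{c_j-m} \rc \cdot \lc W_j \rc \in \Ld,
\]
with the convention $\lc \Pp^{<0}\rc = 0$; here the left-hand side is computed componentwise over the pieces $\Pp(N|_{W_j}\oplus 1)$ of $\Pp(N\oplus 1)$. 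Grouping components by codimension, write $D_c = \sum_{j : c_j = c}\lc W_j \rc \in \Ld$, a class homogeneous of degree $c-n$, so that the identity reads $\lc c_1^{\Cht}(\Oc_{\Pp(N\oplus 1)}(1))^m\rc = \sum_{c\geq 1}\lc \Pp^{c-m}\rc D_c$. By \rref{th:X_PN_L2}, the left-hand side equals $\lc X \rc$ when $m=0$ and vanishes when $m \geq 1$.

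It then remains to exploit the relations $\sum_{c \geq 1}\lc \Pp^{c-m}\rc D_c = 0$ for $m \geq 1$, and this is where the argument has its one real point. The system is triangular: for $m = c$ the coefficient of $D_c$ is $\lc \Pp^0 \rc = 1$, the coefficients of $D_{c'}$ with $c' < c$ vanish since $\lc \Pp^{<0}\rc = 0$, and those with $c' > c$ multiply classes already known to vanish. A descending induction on $c$ (with base case $c = \max_j c_j$, which yields that class zero at once) therefore gives $D_c = 0$ for every $c \geq 1$. Consequently $\lc X^\mud \rc = \sum_c D_c = 0$, so that every Chern number of $X^\mud$ is even; and then the case $m = 0$ gives $\lc X \rc = \sum_c \lc \Pp^c \rc D_c = 0$, so that every Chern number of $X$ is even as well. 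The main obstacle is exactly that $X^\mud$ may carry components of several distinct codimensions, which turns a single clean relation into the above system; the grading, which forces the $D_c$ to sit in different degrees and makes the system triangular, is what lets the induction close.
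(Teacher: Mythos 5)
Your proof is correct and takes essentially the same route as the paper's: both decompose $X^\mud$ by codimension in $X$, evaluate $\lc c_1(\Oc_{\Pp(N\oplus 1)}(1))^m \rc$ via \rref{lemm:projbundle_trivial} for $\Hh=\CH/2$ (your Whitney-formula step making the hypothesis on $N\oplus 1$ explicit), compare with \rref{th:X_PN_L2}, and resolve the resulting triangular system by descending induction. The only inessential differences are your preliminary reduction to connected $X$ and your closing attribution of the triangularity to the grading, which is superfluous---the vanishing $\lc \Pp^{c-m} \rc = 0$ for $c<m$ is all the induction needs, exactly as in the paper.
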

\begin{proof}
Write $X^\mud$ as the disjoint union of subschemes $F^r$ having pure codimension $r$ in $X$, for $r=0,\cdots,\dim X$. By \rref{lemm:projbundle_trivial} (for $\Hh=\CH/2$) and \rref{th:X_PN_L2}, for any $m\geq 0$ we have in $\Ld$
\[
\lc c_1(\Oc_{\Pp(N\oplus 1)}(1))^m \rc = \sum_{r=m}^{\dim X} \lc \Pp^{r-m} \rc \lc F^r \rc = 
\begin{cases}
\lc X \rc &\text{ if $m=0$,} \\
0 &\text{ if $m>0$.}
\end{cases}
\]
By descending induction on $r$ we deduce that $\lc F^r\rc=0$ for $r \geq 1$, and that $\lc X \rc = \lc F^0 \rc$. Since by assumption $F^0 =\varnothing$, it follows that $\lc X\rc=0$, and that $\lc X^\mud \rc=\lc F^1 \rc +\cdots + \lc F^{\dim X}\rc =0$ in $\Ld$.
\end{proof}

\subsection{Kosniowski--Stong formula}
\hfill\\

\vspace*{-1em}

In this section, we consider the theory $\Hh=\CH/2$ and use the notation of \rref{p:Chern_virtual} and \rref{p:E1}.

\begin{proposition}
\label{prop:KS}
Let $X$ be a smooth projective $k$-scheme of pure dimension $n$ with a $\mud$-action, and $N$ the normal bundle to the immersion of the fixed locus $X^\mud \to X$. Let $\alpha$ be a partition such that $|\alpha| \leq n$. Then
\[
c_\alpha(X) = \deg \big(c(-N)c_\alpha(-N\{1\} - \Tan_{X^\mud})\big) \in \Fd.
\]
\end{proposition}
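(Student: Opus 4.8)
The plan is to read off $c_\alpha(X)$ as the $b_\alpha$-coefficient of $\lc X\rc=\lc\Pp(N\oplus1)\rc$ in $\Ld$ (using \rref{p:Chern_Laz}), by extracting that coefficient from the relations of \rref{th:X_PN_L2}. I expect to need \emph{both} families of relations there: the equality $\lc X\rc=\lc\Pp(N\oplus1)\rc$ will produce the top case $|\alpha|=n$, while the supplementary vanishings $\lc c_1(\Oc_{\Pp(N\oplus1)}(1))^m\rc=0$ for $m\geq1$ are exactly what will force the lower-degree contributions to disappear.

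First I would apply \rref{cor:Quillen} with $\Hh=\CH/2$ to $V=N\oplus1$ over each connected component $F$ of $X^\mud$ (of codimension $\rho$ in $X$, so $N|_F$ has rank $\rho$ and $\Pp((N\oplus1)|_F)$ has dimension $n$), and sum over $F$. Since $c_i(-(N\oplus1))=c_i(-N)$ and, by \rref{p:fund_pol_y} and \rref{p:def_exp}, $P(-(N\oplus1)\{y\}-\Tan_F)=\tfrac{y}{\exp y}\,P(-N\{y\}-\Tan_F)$, the corollary rewrites for every $m\geq0$ as
\[
\sum_F\sum_{i}\Res_y\Big(y^{-\rho-i}(\exp y)^{m-1}\deg\big(c_i(-N)\,P(-N\{y\}-\Tan_F)\big)\Big)=\delta_{m,0}\,\lc X\rc\in\Ld,
\]
the right-hand side being $\lc X\rc$ for $m=0$ and $0$ for $m\geq1$ by \rref{th:X_PN_L2}. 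Next I would take the $b_\alpha$-coefficient. Writing $(\exp y)^{m-1}=y^{m-1}\pi(y)^{m-1}$ and expanding $P(-N\{y\}-\Tan_F)$ through \eqref{eq:E1}, the degree map forces, for each $i$ and each partition $\mu$, a single surviving power of $y$, while the residue imposes a second linear constraint; together these collapse everything to a factor $\delta_{|\alpha|,n-m}$ times a convolution. Writing $T(\mu)=\deg\big(c(-N)\,c_\mu(-N\{1\}-\Tan_{X^\mud})\big)\in\Fd$ and letting $\lambda^{(m-1)}_\nu$ be the coefficient of $b_\nu y^{|\nu|}$ in $\pi(y)^{m-1}$ (so $\lambda^{(m-1)}_\varnothing=1$ for every $m$), the extraction should give
\[
\delta_{|\alpha|,n-m}\sum_{b_\nu b_\mu=b_\alpha}\lambda^{(m-1)}_\nu\,T(\mu)=\delta_{m,0}\,c_\alpha(X).
\]
Note that the proposition is precisely the assertion $c_\alpha(X)=T(\alpha)$.

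Finally I would run an ascending induction on $|\mu|$ to prove $T(\mu)=0$ whenever $|\mu|<n$. For $|\mu|=j<n$, the displayed identity with $m=n-j\geq1$ and $\alpha=\mu$ reads $\sum_{b_\nu b_{\mu'}=b_\mu}\lambda^{(n-j-1)}_\nu T(\mu')=0$; the term $\nu=\varnothing$ contributes $T(\mu)$, while every other term has $|\mu'|<j$ and hence vanishes by induction (the base case $j=0$ being immediate). This settles the case $|\alpha|<n$, where moreover $c_\alpha(X)=0$ for degree reasons. For $|\alpha|=n$ the case $m=0$ gives $c_\alpha(X)=\sum_{b_\nu b_\mu=b_\alpha}\lambda^{(-1)}_\nu T(\mu)$, and since any $\nu\neq\varnothing$ forces $|\mu|<n$ and thus $T(\mu)=0$, only $\nu=\varnothing$ survives, leaving $c_\alpha(X)=T(\alpha)$.

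I expect the main obstacle to be the bookkeeping introduced by the trivial summand of $N\oplus1$, which manufactures the factor $\pi(y)^{-1}=y/\exp y$ and hence the convolution against the $\lambda^{(m-1)}_\nu$; this is exactly what prevents the bare equality $\lc X\rc=\lc\Pp(N\oplus1)\rc$ from yielding the clean formula, and what makes the supplementary relations of \rref{th:X_PN_L2} indispensable for killing the terms $T(\mu)$ with $|\mu|<n$. A secondary point requiring care is that $X^\mud$ is in general neither connected nor equidimensional, so all degree computations must be organised component by component, with $\rho$ depending on the chosen component $F$.
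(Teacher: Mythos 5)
Your proposal is correct and takes essentially the same route as the paper: both apply \rref{cor:Quillen} to $N\oplus 1$ component by component, absorb the trivial summand via $P(-1\{y\})=\pi(y)^{-1}$, feed in the relations of \rref{th:X_PN_L2}, and extract $b_\alpha$-coefficients using \eqref{eq:E1} together with the top-degree selection enforced by $\deg$. The only difference is organisational: the paper runs a descending induction on $m$ to replace $(\exp y)^{m-1}$ by $y^{m-1}$ in the residue identity \emph{before} extracting coefficients, whereas you extract coefficients first and then kill the lower-order terms $T(\mu)$, $|\mu|<n$, by an ascending induction on $|\mu|$ against the convolution with the $\lambda^{(m-1)}_\nu$ --- the same induction in different clothing.
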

\begin{proof}
Write $X^\mud$ as the disjoint union of the schemes $F^0,\cdots,F^n$, where $F^r$ has pure dimension $n-r$, and let $N^r=N|_{F^r}$. Consider the Laurent series
\[
f(y) = \sum_{r=0}^n \sum_{i=0}^{n-r} y^{-r-i} \deg \big(c_i(-N^r)P^{\Hh}(-N^r\{y\} -\Tan_{F^r})\big) \in \Fd[\bb][[y]][y^{-1}].
\]
Since $P^{\Hh}(-1\{y\}) = \pi(y)^{-1} = (\exp y)^{-1}y$ and $c_i(-N^r) = c_i(-(N^r\oplus 1))$, we have
\[
(\exp y)^{-1} f(y) = \sum_{r=0}^n \sum_{i=0}^{n-r} y^{-r-1-i} \deg \big(c_i(-(N^r\oplus 1)))P^{\Hh}(-(N^r\oplus 1)\{y\} -\Tan_{F^r})\big).
\]
Therefore by \rref{cor:Quillen}, we have for any $m\geq 0$
\[
\Res_y ( (\exp y)^{m-1} f(y))= \lc c_1(\Oc_{\Pp(N\oplus 1)}(1))^m \rc \in \Fd[\bb].
\]
By \rref{th:X_PN_L2}, this element vanishes when $m>0$, and equals $\lc X \rc$ when $m=0$. Now
\[
(\exp y)^{m-1}  = y^{m-1}\pi(y)^{m-1}= y^{m-1} + \sum_{s\geq m} g_{s,m}y^s  \in \Fd[\bb][[y]][y^{-1}]
\]
where $g_{s,m} \in \Fd[\bb]$. By descending induction on $m$ (the case $m>n$ being clear from the definition of $f(y)$), we obtain in $\Fd[\bb]$
\[
\Res_y (y^{m-1}f(y)) =  
\begin{cases}
\lc X \rc &\text{ if $m=0$,} \\
0 &\text{ if $m>0$.}
\end{cases}
\]
We consider the $b_\alpha$-coefficient of this equation for $m=n-|\alpha|\geq 0$. Since $c_\alpha(X) =0$ if $|\alpha| \neq n$, in view of \rref{p:Chern_Laz} and \eqref{eq:E1} we obtain in $\Fd$
\begin{align*}
c_\alpha(X)
&=\sum_{r=0}^n \sum_{i=0}^{n-r} \sum_{j=0}^{|\alpha|} \Res_y \Big(y^{j+n-|\alpha|-1-r-i} \deg \big(c_i(-N^r)c_\alpha(-N^r\{1\} -\Tan_{F^r})_{|\alpha|-j}\big)\Big)\\
&= \sum_{r=0}^n \sum_{i=0}^{n-r} \deg \big(c_i(-N^r)c_\alpha(-N^r\{1\} -\Tan_{F^r})_{n-r-i}\big)\\
&= \deg \big(c(-N)c_\alpha(-N\{1\} -\Tan_{X^\mud})\big).\qedhere
\end{align*}
\end{proof}

We obtain the following analogue of the Kosniowski--Stong formula \cite{KS}:
\begin{corollary}
\label{cor:KS}
Let $X$ be a smooth projective $k$-scheme of pure dimension $n$ with a $\mud$-action, and $N$ the normal bundle to $X^\mud \to X$. Let $f\in \Zz[y_1,\cdots,y_n]$ be a polynomial of total degree $\leq n$, where each $y_i$ has degree $i$. Then in $\Fd$
\[
\deg \big(f(c_1(\Tan_X),\cdots,c_n(\Tan_X))\big) = \deg \big(c(-N)f(c_1(N\{1\}+\Tan_{X^{\mud}}),\cdots,c_n(N\{1\}+\Tan_{X^{\mud}}))\big).
\]
\end{corollary}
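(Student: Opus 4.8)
The plan is to deduce the formula from Proposition~\rref{prop:KS} by a purely formal manipulation: I would reduce the arbitrary polynomial $f$ to the special functions $Q_\alpha$ by linearity, and then transport the identity of Proposition~\rref{prop:KS} across the orientation reversal that is built into the definition of Chern numbers via \rref{lemm:c_alpha_inv} and \rref{lemm:CF2}.

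First I would observe that both sides of the asserted equality are $\Zz$-linear (hence $\Fd$-linear) in $f$, since substituting the relevant classes for the variables $y_i$ is a ring homomorphism. Writing $f=\sum_d f_d$ for the decomposition into homogeneous components (with $y_i$ of degree $i$), \rref{p:Q-basis} presents each $f_d$ as a $\Zz$-linear combination of the polynomials $Q_\alpha(y_1,\dots,y_n)$ with $|\alpha|=d$; as $f$ has total degree $\leq n$, altogether $f=\sum_{|\alpha|\leq n}\mu_\alpha Q_\alpha$ for suitable integers $\mu_\alpha$. Thus it suffices to treat $f=Q_\alpha$ with $|\alpha|\leq n$, i.e.\ to prove
\[
\deg\big(Q_\alpha(c_1(\Tan_X),\dots,c_n(\Tan_X))\big)=\deg\big(c(-N)\,Q_\alpha(c_1(W),\dots,c_n(W))\big),
\]
where $W$ is shorthand for the symbol $N\{1\}+\Tan_{X^\mud}$, so that $c_i(W)$ and $c_\alpha(W)$ denote the classes of \rref{p:E1}.

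Next I would identify both arguments with Conner--Floyd classes: by \rref{lemm:CF-Q} the left-hand argument equals $c_\alpha(\Tan_X)$, while by \rref{lemm:CF2} specialised at $y=1$ the right-hand argument equals $c_\alpha(W)$. The task then reduces to the single identity $\deg(c_\alpha(\Tan_X))=\deg(c(-N)\,c_\alpha(W))$. To establish it I reverse orientations: \rref{lemm:c_alpha_inv} gives $c_\alpha(\Tan_X)=\sum_{|\beta|=|\alpha|}\lambda_{\alpha,\beta}\,c_\beta(-\Tan_X)$, so applying $\deg$ and recalling the definition \rref{p:Chern_number} yields $\deg(c_\alpha(\Tan_X))=\sum_\beta \lambda_{\alpha,\beta}\,c_\beta(X)$. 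Since $|\beta|=|\alpha|\leq n$, Proposition~\rref{prop:KS} applies and rewrites each $c_\beta(X)$ as $\deg(c(-N)\,c_\beta(-N\{1\}-\Tan_{X^\mud}))$. Finally \rref{lemm:CF2} at $y=1$ gives $\sum_\beta \lambda_{\alpha,\beta}\,c_\beta(-N\{1\}-\Tan_{X^\mud})=c_\alpha(W)$, and pulling $c(-N)$ and $\deg$ out of the sum produces exactly $\deg(c(-N)\,c_\alpha(W))$, as wanted.

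The crux, and essentially the only point requiring care, is the orientation bookkeeping: Proposition~\rref{prop:KS} is phrased in terms of $-\Tan_X$ and $-N\{1\}-\Tan_{X^\mud}$, whereas the corollary uses $\Tan_X$ and $N\{1\}+\Tan_{X^\mud}$. The two sign reversals cancel precisely because the coefficients $\lambda_{\alpha,\beta}$ governing the flip on $X$ (through \rref{lemm:c_alpha_inv}) are literally the same integers as those governing the flip on the fixed locus (through \rref{lemm:CF2} at $y=1$); it is this coincidence that makes the two substitutions compatible once the Proposition is inserted in between. I would also note that no further degree-chasing is needed: the mixed-dimensionality of $X^\mud$ and the fact that $\deg$ detects only top-degree classes are already absorbed into Proposition~\rref{prop:KS}, so once the reduction to $f=Q_\alpha$ is in place the remaining argument is entirely formal and the only mod~$2$ input is the Proposition itself.
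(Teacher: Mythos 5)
Your proposal is correct and follows essentially the same route as the paper, which reduces to $f=Q_\alpha$ via \rref{p:Q-basis} and then deduces the statement from \rref{prop:KS} by means of \rref{lemm:CF2}. The orientation-reversal bookkeeping you carry out explicitly through \rref{lemm:CF-Q} and \rref{lemm:c_alpha_inv} is exactly what the two equalities of \rref{lemm:CF2} (specialised at $y=1$) encode, so your write-up is simply an expanded version of the paper's two-line proof.
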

\begin{proof}
By \rref{p:Q-basis} we may assume that $f=Q_\alpha$ with $|\alpha| \leq n$. In view of \rref{lemm:CF2}, the statement follows from \rref{prop:KS}.
\end{proof}

\subsection{Cobordism modulo two}

\begin{theorem}
\label{th:Lmod2}
Let $X$ be a smooth projective $k$-scheme with a $\mud$-action, and $N$ the normal bundle to the immersion of the fixed locus $X^\mud \to X$. Write $\zeta=c_1(\Oc_{\Pp(N\oplus 1)}(-1)) \in \CHt(\Pp(N\oplus 1))$. Then in $\Laz/2$ (using the convention of \rref{p:frac})
\[
\lc X \rc = \Big \lc \frac{2\zeta}{[2]_{\CHt}(\zeta)} \Big\rc \quad \text{ and } \quad 0 = \Big\lc \frac{2\zeta^{m+1}}{[2]_{\CHt}(\zeta)} \Big\rc \quad \text{ for $m\geq 1$}.
\]
\end{theorem}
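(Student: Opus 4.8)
The plan is to run the same deformation argument as in \rref{th:X_PN_L2}, but integrally (in $\CHt$ rather than in its specialisation $\Ld$), and to extract the refined relation by combining the deformation formula \rref{lemm:deformation} with Vishik's formula \rref{lemm:Vishik} for a carefully chosen test series. Since the fractions in the statement involve dividing by $[2]_{\CHt}$, whose leading coefficient is $2$, they do not lie in $\CHt$ itself; I will therefore work in the oriented cohomology theory obtained from $\CHt$ by inverting $2$ in the coefficient ring (via \rref{ex:tensor_oct}, applied to $\CHt(\Spec k)=\Zz[\bb]\to \Zz[\bb][1/2]$), where every power series below is defined in the sense of \rref{p:frac}. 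As in \rref{th:X_PN_L2}, let $Y$ be the blowup of $X^\mud$ in $X$, so that $Y^\mud=\Pp(N)$ has pure codimension one by \rref{lemm:inv_blowup}; let $f\colon Y\to Z=Y/\mud$ be the quotient morphism, with $\Lc=\coker(\Oc_Z\to f_*\Oc_Y)$ the associated line bundle, and put $\lambda=c_1(\Lc^\vee)$ and $\eta=c_1(\Oc_Y(\Pp(N)))$. By \dref{lemm:inv_cod1}{lemm:inv_cod1:2} one has $f^*\lambda=\eta$, and $Z\in\Sm_k$ by \dref{lemm:inv_cod1}{lemm:inv_cod1:4}.

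The decisive step is the choice of test series. Fix $m\geq 0$ and set
\[
g(x)=\frac{2x^{m+1}}{[2]_{\CHt}(x)}\cdot\frac{x}{[-1]_{\CHt}(x)}\in\Laz[1/2][[x]].
\]
Here the second factor $x/[-1]_{\CHt}(x)$ is a unit with \emph{integral} coefficients (its constant term is $-1$), while the first carries the only denominators that fail to be invertible over $\Laz$. Feeding $g$ into \rref{lemm:deformation}, a direct computation gives $g(\zeta)\,\tfrac{[-1]_{\CHt}(\zeta)}{\zeta}=\tfrac{2\zeta^{m+1}}{[2]_{\CHt}(\zeta)}$, while the constant term is $g(0)=-1$ for $m=0$ and $g(0)=0$ for $m\geq 1$; hence
\[
\lc g(\eta)\rc=g(0)\lc X\rc+\Big\lc\frac{2\zeta^{m+1}}{[2]_{\CHt}(\zeta)}\Big\rc.
\]

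Next I would compute $\lc g(\eta)\rc$ through Vishik's formula. Since $f^*\lambda=\eta$, the projection formula \dref{def:oct}{def:oct:PF} and \rref{lemm:Vishik} give $f_*(g(\eta))=g(\lambda)f_*(1)=g(\lambda)\tfrac{[2]_{\CHt}(\lambda)}{\lambda}$, and the two factors of $g$ conspire so that the $[2]_{\CHt}$ cancels:
\[
\lc g(\eta)\rc=\Big\lc g(\lambda)\frac{[2]_{\CHt}(\lambda)}{\lambda}\Big\rc=\Big\lc\frac{2\lambda^{m+1}}{[-1]_{\CHt}(\lambda)}\Big\rc.
\]
The crucial observation is that $\lambda^{m+1}/[-1]_{\CHt}(\lambda)=\lambda^m\cdot\big(\lambda/[-1]_{\CHt}(\lambda)\big)$ has coefficients in $\Laz$, so the right-hand side lies in $2\Laz$. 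This is precisely the point of inserting the factor $x/[-1]_{\CHt}(x)$ into $g$: it trades the non-invertible denominator coming from Vishik's formula for the manifestly divisible-by-$2$ numerator.

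Comparing the two expressions yields $g(0)\lc X\rc+\lc\tfrac{2\zeta^{m+1}}{[2]_{\CHt}(\zeta)}\rc\in 2\Laz$ inside $\Laz[1/2]$. Because $g(0)\in\Zz$ and $\lc X\rc\in\Laz$, this forces $\lc\tfrac{2\zeta^{m+1}}{[2]_{\CHt}(\zeta)}\rc$ to lie in $\Laz$ (using that $\Laz\to\Laz[1/2]$ is injective, $\Laz$ being torsion free); this \emph{a posteriori} integrality is exactly what makes the reduction modulo $2$ meaningful, and I expect it to be the main conceptual hurdle, since neither $g$ nor the fraction in the statement is integral on the nose. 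Reducing the relation modulo $2$ then gives $\lc\tfrac{2\zeta^{m+1}}{[2]_{\CHt}(\zeta)}\rc=0$ in $\Laz/2$ for $m\geq 1$, while for $m=0$ the value $g(0)=-1\equiv 1$ produces $\lc\tfrac{2\zeta}{[2]_{\CHt}(\zeta)}\rc=\lc X\rc$, as claimed.
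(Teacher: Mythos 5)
Your proof is correct, and while it rests on exactly the same pillars as the paper's proof --- the blowup $Y$ of $X^\mud$ in $X$ with $Y^\mud=\Pp(N)$ \rref{lemm:inv_blowup}, the quotient $f\colon Y \to Z$ and its line bundle $\Lc$ \rref{lemm:inv_cod1}, Vishik's formula \rref{lemm:Vishik}, and the deformation relation \rref{lemm:deformation}, all inside the $2$-inverted theory $\CHt(-)[2^{-1}]$ --- it combines them in a genuinely cleaner way. The paper feeds the bare series $g=x^{m+1}([2]_{\CHt}(x))^{-1}$ into \rref{lemm:deformation} (and separately $g=2x([2]_{\CHt}(x))^{-1}$ for the first identity), so the factor $[-1]_{\CHt}(\zeta)/\zeta=-1+a_1\zeta+\cdots$ generates an infinite tail of correction terms $a_i\lc\zeta^{m+1+i}/[2]_{\CHt}(\zeta)\rc$ that must then be disposed of by a descending induction on $m$, and the first identity needs the second as input. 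Your insertion of the factor $x/[-1]_{\CHt}(x)$ --- an integral unit by \eqref{eq:-1} --- makes both cancellations exact: $g(\zeta)\,[-1]_{\CHt}(\zeta)/\zeta = 2\zeta^{m+1}/[2]_{\CHt}(\zeta)$ on the deformation side, and $g(\lambda)\,[2]_{\CHt}(\lambda)/\lambda = 2\lambda^{m+1}/[-1]_{\CHt}(\lambda)$ on the Vishik side, so every $m\geq 0$ is settled uniformly, independently, and without induction; the integrality of $\lc 2\zeta^{m+1}/[2]_{\CHt}(\zeta)\rc$, which the paper extracts inductively, falls out in one stroke. The one step you should make explicit is the final membership in $2\Laz$: writing $2\lambda^{m+1}/[-1]_{\CHt}(\lambda)=2\lambda^m h(\lambda)$ with $h\in\Laz[[x]]$, you need $\lc\lambda^{j}\rc\in\fund{\CHt}=\Laz$ (not merely in $\Zz[\bb]$) for each $j$, which is Merkurjev's result quoted in the paper (\cite[Corollary~9.10]{Mer-Ori}); this is the same fact the paper invokes tacitly to justify \eqref{eq:eta_2}, so it is a missing citation rather than a gap. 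With that reference added, your argument is complete, and arguably tidier than the original.
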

Before proving the theorem, let us clarify its statement. There is a unique power series $v(x)\in \Laz[2^{-1}][[x]]$ such that $v(x)\cdot [2]_{\CHt}(x) =x$. The theorem says that $\lc 2v(\zeta) \rc \in \Laz[2^{-1}]$ belongs to $\Laz$ and is congruent to $\lc X\rc$ modulo $2\Laz$, and that $\lc \zeta^m v(\zeta) \rc \in \Laz[2^{-1}]$ belongs to $\Laz$ for $m\geq 1$. 

\begin{proof}
Let $Y$ be the blowup of $X^\mud$ in $X$, and $f\colon Y \to Z=Y/\mud$ the quotient morphism. Then by \rref{lemm:inv_blowup} we have $Y^\mud = \Pp(N)$, and by \rref{lemm:inv_cod1} the $\Oc_Z$-module $\Lc = \coker(\Oc_Z \to f_*\Oc_Y)$ is invertible and $Z \in \Sm_k$. Let $\mu = c_1(\Lc^\vee) \in \CHt(Z)$ and $\eta = c_1(\Oc_Y(\Pp(N))) \in \CHt(Y)$. Then $\eta = f^* \mu$ by \dref{lemm:inv_cod1}{lemm:inv_cod1:2}. It follows form the projection formula \dref{def:oct}{def:oct:PF} and \rref{lemm:Vishik} (pushing forward along $Z \to \Spec k$) that
\begin{equation}
\label{eq:eta_2}
\Big \lc \frac{\eta^{m+1}}{[2]_{\CHt}(\eta)} \Big\rc = \lc \mu^m \rc \in \Laz[2^{-1}] \quad \text{ for any $m\geq 0$}.
\end{equation}
Applying \rref{lemm:deformation} with $g=x^{m+1}([2]_{\CHt}(x))^{-1}$ for $m\geq 1$ (and $\Hh=\CHt(-)[2^{-1}]$) yields (the elements $a_i \in \Laz$ are defined in \eqref{p:formal_mult})
\[
\Big \lc \frac{\eta^{m+1}}{[2]_{\CHt}(\eta)} \Big\rc = \Big \lc \frac{\zeta^m[-1]_{\CHt}(\zeta)}{[2]_{\CHt}(\zeta)} \Big\rc =-\Big\lc \frac{\zeta^{m+1}}{[2]_{\CHt}(\zeta)} \Big\rc + \sum_{i \geq 1} a_i\Big\lc \frac{\zeta^{m+1+i}}{[2]_{\CHt}(\zeta)} \Big\rc  \in \Laz[2^{-1}].
\]
This element belongs to $\Laz$ by \eqref{eq:eta_2}, and the second statement follows by descending induction on $m$ (the case $m>\dim X$ being clear). We now apply \rref{lemm:deformation} with $g=2x([2]_{\CHt}(x))^{-1}$ and obtain in $\Laz[2^{-1}]$
\[
2\Big \lc \frac{\eta}{[2]_{\CHt}(\eta)} \Big\rc = \lc X \rc + 2\Big \lc \frac{[-1]_{\CHt}(\zeta)}{[2]_{\CHt}(\zeta)} \Big\rc = \lc X \rc - \Big \lc \frac{2\zeta}{[2]_{\CHt}(\zeta)} \Big\rc +  \sum_{i \geq 1} a_i\Big\lc \frac{2\zeta^{i+1}}{[2]_{\CHt}(\zeta)} \Big\rc.
\]
This element belongs to $2\Laz$ by \eqref{eq:eta_2} (with $m=0$), and so do the terms being summed over $i$ by the second statement. The first statement follows.
\end{proof}

\section{Euler number}
\numberwithin{theorem}{subsection}
\numberwithin{lemma}{subsection}
\numberwithin{proposition}{subsection}
\numberwithin{corollary}{subsection}
\numberwithin{example}{subsection}
\numberwithin{notation}{subsection}
\numberwithin{definition}{subsection}
\numberwithin{remark}{subsection}

\label{sect:Euler}

\subsection{The theory {\texorpdfstring{$\CHx$}{E}}}
\begin{para}
The \emph{Euler number} of a smooth projective $k$-scheme of pure dimension $n$ is
\[
\chi(X) = \deg c_n(\Tan_X) \in \Zz.
\]
As mentioned in the introduction, this integer is the alternate sum of the $\ell$-adic Betti numbers, but we will not use this description.
\end{para}

\begin{definition}
We consider the functor $\CHx = \CHt \otimes_{\Zz[\bb]} \Zz[t]$ where $\Zz[\bb] \to \Zz[t]$ is the morphism $b_i \mapsto (-1)^it^i$, and $t$ has degree $-1$. It follows from \rref{prop:tilde_is_oct}, \rref{ex:tensor_oct} and \rref{p:CH_is_oct} that $\CHx$ is an oriented cohomology theory. We have $\CHx(X)=\CH(X)[t]$ for any $X \in \Sm_k$, and $f_{\CHx}^* = f_{\CH}^*$ for every morphism $f \colon Y \to X$ in $\Sm_k$. If $f$ is projective with virtual tangent bundle $\Tan_f \in K_0(Y)$, then for any $a \in \CHx(Y)$,
\[
f^{\CHx}_*(a)= f^{\CH}_*\Big( a\sum_{i \in \Nn} t^ic_i^{\CH}(\Tan_f)\Big) \in \CHx(X).
\]
\end{definition}

\begin{para}
\label{p:c1_CHx}
Let $L$ be a line bundle over $X \in \Sm_k$. Then $\pi(x) \in \Zz[\bb][[x]]$ maps to $(1+tx)^{-1} \in \Zz[t][[x]]$, hence in view of \rref{p:c1_tilde}
\[
c_1^{\CHx}(L) = \frac{c_1^{\CH}(L)}{1+t c_1^{\CH}(L)}, \quad \text{ hence } \quad c_1^{\CH}(L) = \frac{c_1^{\CHx}(L)}{1-t c_1^{\CHx}(L)}.
\]
\end{para}

\begin{lemma}
\label{lemm:FGL_CHx}
The formal group law of the theory $\CHx$ is given by
\[
x +_{\CHx} y  = \frac{x+y-2txy}{1-t^2xy} \in \Zz[t][[x,y]].
\]
\end{lemma}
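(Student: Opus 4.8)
The plan is to reduce the computation to the general formula for the formal group law of a twisted theory. Since $\CHx = \CHt \otimes_{\Zz[\bb]} \Zz[t]$ is obtained from $\CHt$ by the base change $b_i \mapsto (-1)^it^i$ (see \rref{ex:tensor_oct}), its formal group law is the image under this morphism of the formal group law of $\CHt$. The latter is described in \rref{p:FGL_exp}: writing $\exp^{-1}$ for the compositional inverse of $\exp$, and using that the underlying theory $\CH$ is additive by \rref{p:additive}, we have $x +_{\CHt} y = \exp(\exp^{-1}(x) + \exp^{-1}(y))$. Applying the base-change morphism to the coefficients (which commutes with composition of power series, hence with the formation of compositional inverses), we obtain
\[
x +_{\CHx} y = e\big(e^{-1}(x) + e^{-1}(y)\big),
\]
where $e \in \Zz[t][[x]]$ denotes the image of $\exp$.

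The next step is to identify $e$ and $e^{-1}$ explicitly. From the definition of $\exp$ in \rref{p:def_exp} one computes $e(x) = \sum_{i\in\Nn} (-1)^i t^i x^{i+1} = x/(1+tx)$, and a direct check gives $e^{-1}(x) = x/(1-tx)$; both formulas are already recorded in \rref{p:c1_CHx}. It then remains only to substitute and simplify in $\Zz[t][[x,y]]$. Placing the two summands over a common denominator yields
\[
e^{-1}(x) + e^{-1}(y) = \frac{x+y-2txy}{(1-tx)(1-ty)},
\]
and the identity $(1-tx)(1-ty) + t(x+y-2txy) = 1-t^2xy$ shows that applying $e(s) = s/(1+ts)$ to this element clears the denominator $(1-tx)(1-ty)$ and produces exactly $(x+y-2txy)/(1-t^2xy)$, as claimed.

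I do not expect any genuine obstacle: granting \rref{p:FGL_exp} and the additivity of $\CH$, the statement is a finite manipulation of rational functions, all of whose denominators ($1+tx$, $1-ty$, $1-t^2xy$) are invertible in the relevant power series rings. The only points deserving a line of justification are that $e^{-1}(x)=x/(1-tx)$ really is the compositional inverse of $e$, and that passing to the base change $\Zz[\bb]\to\Zz[t]$ is compatible with the formation of both the formal group law and compositional inverses; both are routine.
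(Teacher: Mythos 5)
Your proposal is correct and is in substance the same as the paper's proof: both rest on the identity $x+_{\CHx}y = e(e^{-1}(x)+e^{-1}(y))$ with $e(x)=x/(1+tx)$ and $e^{-1}(x)=x/(1-tx)$ (the content of \rref{p:c1_CHx}), followed by the identical rational-function simplification yielding $1-t^2xy$ in the denominator. The only cosmetic difference is packaging: the paper runs the computation on $c_1^{\CHx}(L\otimes M)$ for line bundles $L,M$ using \rref{p:additive} and \rref{p:c1_CHx}, whereas you invoke \rref{p:FGL_exp} together with base change along $b_i\mapsto(-1)^it^i$ --- a maneuver the paper itself employs in the proof of \rref{prop:Lp_univ}, so your routine compatibility claims are unobjectionable.
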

\begin{proof}
Let $L,M$ be line bundles over $X \in \Sm_k$. Write $l=c_1^{\CH}(L),m=c_1^{\CH}(M)$ and $\lambda = c_1^{\CHx}(L), \mu = c_1^{\CHx}(M)$ in $\CHx(X)$. By \rref{p:additive} and \rref{p:c1_CHx}, we have in $\CHx(X)$
\begin{align*}
c_1^{\CHx}(L \otimes M) 
&= \frac{l+m}{1+t(l+m)} = \frac{\lambda(1-t\lambda)^{-1} + \mu(1 -t \mu)^{-1}}{1+t(\lambda(1-t\lambda)^{-1} + \mu(1 -t \mu)^{-1})}\\
&= \frac{\lambda(1-t\mu) + \mu(1 -t \lambda)}{(1 -t \lambda)(1-t\mu) +t\lambda(1-t\mu) +t  \mu(1 -t \lambda)}= \frac{\lambda + \mu -2t\lambda\mu}{1-t^2\lambda \mu}.\qedhere
\end{align*}
\end{proof}

\begin{lemma}
\label{lemm:CHx_FGL}
The formal multiplication by $a\in \Zz$ of the theory $\CHx$ is given by
\[
[a]_{\CHx}(x) = \frac{ax}{1+(a-1)tx} \in \Zz[t][[x]].
\]
\end{lemma}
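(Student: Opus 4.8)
The plan is to linearise the formal group law of Lemma~\ref{lemm:FGL_CHx} by writing down its logarithm explicitly, after which the formal multiplication by $a$ is forced by the corresponding (trivial) multiplication on the additive group.

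First I would introduce $\ell(x) = x/(1-tx) \in \Zz[t][[x]]$, a power series with leading term $x$, whose composition inverse is $\ell^{-1}(u) = u/(1+tu)$ (obtained by solving $u = x/(1-tx)$ for $x$). The key step is to check that $\ell$ is a logarithm for the law of Lemma~\ref{lemm:FGL_CHx}, that is
\[
\ell(x +_{\CHx} y) = \ell(x) + \ell(y) \in \Zz[t][[x,y]].
\]
Starting from Lemma~\ref{lemm:FGL_CHx}, one computes $1 - t\,(x +_{\CHx} y) = (1-tx)(1-ty)/(1-t^2xy)$, whence $\ell(x +_{\CHx} y) = (x + y - 2txy)/\big((1-tx)(1-ty)\big)$; the right-hand side $\ell(x) + \ell(y)$ reduces to the same fraction after placing the two terms over the common denominator $(1-tx)(1-ty)$. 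This $\ell$ is of course nothing but the substitution relating $c_1^{\CHx}$ to $c_1^{\CH}$ recorded in \ref{p:c1_CHx}, and the identity merely reflects the additivity of $\CH$ from \ref{p:additive}.

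Once $\ell$ is a homomorphism to the additive formal group law, I claim $\ell\big([a]_{\CHx}(x)\big) = a\,\ell(x)$ for every $a \in \Zz$. For $a \geq 0$ this is an immediate induction along the recursion defining $[a]_{\CHx}$ in \ref{p:formal_mult}, using $\ell\big([a]_{\CHx}(x)\big) = \ell\big([a-1]_{\CHx}(x) +_{\CHx} x\big) = \ell\big([a-1]_{\CHx}(x)\big) + \ell(x)$. Applying $\ell$ to the defining relation $F_{\CHx}\big(x, [-1]_{\CHx}(x)\big) = 0$ of the formal inverse, and using $\ell(0) = 0$, gives $\ell\big([-1]_{\CHx}(x)\big) = -\ell(x)$; the case $a < 0$ then follows from $[a]_{\CHx}(x) = [-1]_{\CHx}\big([-a]_{\CHx}(x)\big)$. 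With this in hand I would conclude by evaluating
\[
[a]_{\CHx}(x) = \ell^{-1}\big(a\,\ell(x)\big) = \ell^{-1}\!\Big(\frac{ax}{1-tx}\Big) = \frac{ax}{1 + (a-1)tx},
\]
the last equality being the formula for $\ell^{-1}$ after clearing denominators.

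Every computation here is routine rational-function algebra, so I do not anticipate a genuine obstacle; the one point to get right is the logarithm identity, which rests on the factorisation $1 - tx - ty + t^2xy = (1-tx)(1-ty)$, together with the care needed to treat $a < 0$ through the formal inverse rather than by a naive induction. A fully self-contained alternative that avoids logarithms is a direct induction on $a \geq 0$ proving the claimed formula straight from Lemma~\ref{lemm:FGL_CHx}; there the inductive step turns on the analogous factorisation $1 + (a-2)tx - (a-1)t^2x^2 = (1-tx)\big(1 + (a-1)tx\big)$, and negative $a$ is again handled by composing with $[-1]_{\CHx}$.
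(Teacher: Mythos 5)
Your proof is correct, and it takes a genuinely different --- purely formal --- route from the paper's. The paper disposes of the lemma in one line of geometry: for a line bundle $L$, writing $l=c_1^{\CH}(L)$ and $\lambda=c_1^{\CHx}(L)$, additivity of $\CH$ \rref{p:additive} together with \rref{p:c1_CHx} gives $c_1^{\CHx}(L^{\otimes a})=\frac{al}{1+atl}$, and substituting $l=\lambda(1-t\lambda)^{-1}$ yields $\frac{a\lambda}{1+(a-1)t\lambda}$; the identification of this expression with $[a]_{\CHx}(\lambda)$, and the fact that evaluation at first Chern classes of line bundles (say $\Oc(1)$ on $\Pp^n$ for all $n$) detects power-series identities, are left implicit there. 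You instead never leave $\Zz[t][[x]]$: your $\ell(x)=x/(1-tx)$ is exactly $\exp^{-1}$ for this theory (compare \rref{p:c1_CHx} and \rref{p:FGL_exp}), the logarithm identity rests on the factorisation $1-tx-ty+t^2xy=(1-tx)(1-ty)$, which checks out, and $\ell([a]_{\CHx}(x))=a\,\ell(x)$ follows by your induction together with the formal-inverse argument, after which composing with $\ell^{-1}(u)=u/(1+tu)$ gives the formula; your fallback direct induction is also sound, the stated factorisation $1+(a-2)tx-(a-1)t^2x^2=(1-tx)\big(1+(a-1)tx\big)$ being precisely what makes the inductive step close. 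What your route buys is self-containedness: once \rref{lemm:FGL_CHx} is known, everything is power-series algebra over $\Zz[t]$, with no appeal to line bundles and no implicit evaluation step. What the paper's route buys is brevity: it never needs to verify that $\ell$ is a homomorphism, since that is automatic from the additivity of $\CH$. One detail you handled correctly: the defining relation of the formal inverse is $F_{\CHx}(x,[-1]_{\CHx}(x))=0$, as you wrote; the relation ``$=x$'' displayed in \rref{p:formal_mult} is evidently a typo, as \eqref{eq:-1} confirms.
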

\begin{proof}
Let $L$ be a line bundle over $X \in \Sm_k$. Write $l = c_1^{\CH}(L)$ and $\lambda = c_1^{\CHx}(L)$ in $\CHx(X)$. In view of \rref{p:additive} and \rref{p:c1_CHx}, we have in $\CHx(X)$
\[
c_1^{\CHx}(L^{\otimes a}) =  \frac{al}{1+atl} = \frac{a\lambda(1 -t\lambda)^{-1}}{1+at\lambda(1 -t\lambda)^{-1}}=\frac{a\lambda}{1+(a-1)t\lambda}.\qedhere
\]
\end{proof}

\begin{para}
When $X$ is a smooth projective $k$-scheme of pure dimension $n$, we have
\[
\lc X\rc = \chi(X)t^n \in \fund{\CHx} \subset \CHx(\Spec k)=\Zz[t].
\]
\end{para}

\begin{lemma}
\label{lemm:chi_odd}
The subring $\fund{\CHx} \subset \Zz[t]$ is generated by $2t$ and $t^2$.
\end{lemma}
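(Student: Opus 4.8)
The plan is to avoid examining the Euler numbers of all smooth projective $k$-schemes one by one, and instead to identify $\fund{\CHx}$ as the image of the cobordism ring $\Laz$ under an explicit ring homomorphism, thereby reducing the computation to the formal group law already determined in \rref{lemm:FGL_CHx}.

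First I would record that the base change defining $\CHx$ is governed by the graded ring homomorphism $\phi \colon \Zz[\bb] = \CHt(\Spec k) \to \CHx(\Spec k) = \Zz[t]$ sending $b_i \mapsto (-1)^it^i$. By \rref{ex:tensor_oct}, the pushforwards of $\CHx$ are obtained from those of $\CHt$ by applying $\phi$, so that $\lc X \rc_{\CHx} = \phi(\lc X \rc_{\CHt})$ for every smooth projective $k$-scheme $X$. Since $\Laz = \fund{\CHt}$ is, by \rref{p:Hc}, generated as a group by the classes $\lc X \rc_{\CHt}$, it follows that $\fund{\CHx} = \phi(\Laz)$.

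Next I would invoke \rref{cor:Lazard}: the ring $\Laz$ is generated by the coefficients $a_{i,j}$ of its formal group law \eqref{eq:FGL}. Applying $\phi$, the subring $\fund{\CHx} = \phi(\Laz)$ is therefore generated by the elements $\phi(a_{i,j})$, which are precisely the coefficients of the formal group law of $\CHx$. The whole problem thus collapses to reading off these coefficients from \rref{lemm:FGL_CHx}: expanding
\[
x +_{\CHx} y = (x + y - 2txy)\sum_{k \geq 0}(t^2xy)^k \in \Zz[t][[x,y]]
\]
gives $a_{k+1,k} = a_{k,k+1} = t^{2k}$ and $a_{k+1,k+1} = -2t^{2k+1}$ for $k \geq 0$, all remaining coefficients vanishing. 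Each of these lies in $\Zz[2t,t^2]$, while the two coefficients $a_{2,1} = t^2$ and $a_{1,1} = -2t$ already generate $\Zz[2t,t^2]$; hence the subring they generate is exactly $\Zz[2t,t^2]$, as required.

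The only genuinely delicate point is the first reduction, namely that $\fund{\CHx}$ is the full image $\phi(\Laz)$ and is consequently generated by the images of the formal group law coefficients; once this is in place, the remainder is a one-line expansion. It is worth noting that this route yields, as a byproduct, the classical parity statement that $\chi(X)$ is even whenever $\dim X$ is odd (since $\lc X \rc = \chi(X)t^{\dim X}$ must lie in $\Zz[2t,t^2]$), with no appeal to $\ell$-adic Betti numbers. A more pedestrian proof of the inclusion $\fund{\CHx} \subseteq \Zz[2t,t^2]$ would instead need this parity fact as an external input, the reverse inclusion then being supplied by the classes $\lc \Pp^1 \rc = 2t$, $\lc \Pp^2 \rc = 3t^2$ and $\lc \Pp^1 \times \Pp^1 \rc = 4t^2$.
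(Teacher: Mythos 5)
Your proposal is correct and follows essentially the same route as the paper: the paper's proof likewise invokes \rref{cor:Lazard} to reduce to the coefficients $a_{i,j}$ of $F_{\CHx}$ and then reads them off from \rref{lemm:FGL_CHx}, obtaining $-2t^{2i-1}$ for $i=j>0$, $t^{2i}$ for $i=j-1>0$, $t^{2j}$ for $j=i-1>0$, and $0$ otherwise. The only difference is that you spell out the justification the paper leaves implicit --- namely that $\fund{\CHx}=\phi(\Laz)$ for the base-change morphism $\phi\colon b_i \mapsto (-1)^i t^i$, so that \rref{cor:Lazard} transfers to the coefficients of $F_{\CHx}$ --- which is a worthwhile clarification but not a different argument.
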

\begin{proof}
By \rref{cor:Lazard}, this subring is generated by the coefficients $a_{i,j}$ of $F_{\CHx}(x,y)$. Thus the statement follows from \rref{lemm:FGL_CHx}, which implies that
$a_{i,j}$ equals
\[
-2t^{2i-1} \text{ if } i=j>0, \quad t^{2i} \text{ if } i=j-1>0, \quad t^{2j} \text{ if } j=i-1>0, \quad  0 \text{ otherwise}.\qedhere
\]
\end{proof}

\begin{para}
\label{p:2Cc} Lemma \rref{lemm:chi_odd} implies that $2\Zz[t] \subset \fund{\CHx}$ and  $t^2\fund{\CHx} \subset \fund{\CHx}$.
\end{para}

\begin{lemma}
\label{lemm:chi_P}
Let $V$ be a vector bundle of rank $r>0$ over $S\in \Sm_k$ and $p\colon \Pp(V) \to S$ the associated projective bundle. Then
\[
p^{\CHx}_*(1) = rt^{r-1} \in \CHx(S).
\]
\end{lemma}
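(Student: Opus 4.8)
The plan is to unwind the definition of the pushforward in $\CHx$ and reduce the statement to a Chern-class computation in the Chow ring, which is then evaluated by means of the standard pushforward formula for powers of $\xi = c_1^{\CH}(\Oc_{\Pp(V)}(1))$ recalled in the proof of \rref{prop:Quillen}. By the definition of $\CHx$, and since the relative tangent bundle of $p$ is $\Tan_p = (p^*V)(1) - 1 \in K_0(\Pp(V))$ (see the proof of \rref{prop:Quillen}), I would first write
\[
p_*^{\CHx}(1) = p_*^{\CH}\Big(\sum_{i\in\Nn} t^i c_i^{\CH}(\Tan_p)\Big) = p_*^{\CH}\Big(\sum_{i\in\Nn} t^i c_i^{\CH}((p^*V)(1))\Big),
\]
the second equality holding because the trivial line bundle contributes only $c_0 = 1$ to the total Chern class.

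Next I would expand the integrand using the splitting principle \rref{p:splitting}: writing formal Chern roots $a_1, \dots, a_r$ for $V$, the bundle $(p^*V)(1)$ has Chern roots $a_j + \xi$, the theory $\CH$ being additive (\rref{p:additive}). Collecting the roots into elementary symmetric functions, that is into the Chern classes of $V$, yields the identity
\[
\sum_{i\in\Nn} t^i c_i^{\CH}((p^*V)(1)) = \prod_{j=1}^r \big(1 + t(a_j + \xi)\big) = \sum_{k=0}^r t^k c_k^{\CH}(V)\,(1 + t\xi)^{r-k} \in \CH(\Pp(V))[t].
\]

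Finally I would push this forward term by term, using that $p_*^{\CH}(\xi^j) = 0$ for $j < r-1$, while $p_*^{\CH}(\xi^{r-1}) = 1$ and $p_*^{\CH}(\xi^r) = c_1^{\CH}(-V) = -c_1^{\CH}(V)$. Since $(1 + t\xi)^{r-k}$ has $\xi$-degree $r-k$, only the summands $k = 0$ and $k = 1$ survive: the term $k=0$ contributes $r t^{r-1} - c_1^{\CH}(V) t^r$ (from the coefficients of $\xi^{r-1}$ and $\xi^r$), while the term $k = 1$ contributes $c_1^{\CH}(V) t^r$ (from the coefficient of $\xi^{r-1}$). The two contributions involving $c_1^{\CH}(V)$ cancel, leaving $p_*^{\CHx}(1) = r t^{r-1}$. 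The computation is routine; the only point deserving attention is precisely this cancellation of the $V$-dependent terms, which is what forces the answer to depend only on the rank $r$. One could instead try to reduce to a trivial bundle by base change, where $\Pp(V) = S \times \Pp^{r-1}$ and the value would follow from $\lc \Pp^{r-1} \rc = \chi(\Pp^{r-1}) t^{r-1} = r t^{r-1}$; but that route requires separately arguing that $p_*^{\CHx}(1)$ is independent of $V$, a fact the direct computation above supplies automatically.
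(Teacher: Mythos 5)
Your proof is correct and is essentially the paper's own argument: the paper applies \rref{prop:Quillen} with $m=0$ and computes the two top $y$-coefficients of $Q(-V\{y\})=\prod_{i=1}^r(1+tc_1^{\CH}(L_i)+ty)$ by the splitting principle, which is exactly your product $\prod_{j=1}^r\big(1+t(a_j+\xi)\big)$ followed by the same cancellation of the $c_1^{\CH}(V)t^r$ terms. The only cosmetic difference is that you inline the $m=0$ case of \rref{prop:Quillen} — unwinding the definition of $p_*^{\CHx}$ via $\Tan_p=(p^*V)(1)-1$ and using $p_*^{\CH}(\xi^j)=c_{j+1-r}^{\CH}(-V)$ directly — rather than invoking the proposition as a black box.
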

\begin{proof}
For an element $F \in K_0(S)$, let us denote by $Q(F\{y\}) \in \CHx(S)[y]$ the image of $P^{\CH}(F\{y\}) \in \CHt(S)[y]$ (see \rref{p:fund_pol_y}). We claim that
\[
Q(-V\{y\}) = t^r y^r + (c_1^{\CH}(V) t^r + rt^{r-1}) y^{r-1} + q,
\]
where $q \in \CHx(S)[y]$ is a polynomial in $y$ of degree $\leq r-2$. To see this, by the splitting principle \rref{p:splitting} we may assume that $V$ admits a filtration by subbundles with successive quotients line bundles $L_1,\cdots,L_r$. Then
\[
Q(-V\{y\}) = \prod_{i=1}^r Q(-L_i\{y\}) = \prod_{i=1}^r (1+tc_1^{\CH}(L_i) +ty),
\]
from which the claimed formula follows. Thus \rref{prop:Quillen} (with $m=0$) implies that
\[
p^{\CHx}_*(1) = c_1^{\CH}(V)t^r + rt^{r-1} + c_1^{\CH}(-V)t^r  = rt^{r-1}.\qedhere
\]
\end{proof}

\subsection{Euler number and fixed locus}
\hfill\\

\vspace*{-1em}

The first part of the next statement is well-known, at least when $\carac k \neq 2$.
\begin{proposition}
\label{prop:E_fix}
Let $X$ be a smooth projective $k$-scheme of pure dimension $n$ with a $\mud$-action.
\begin{enumerate}[(i)]
\item We have $\chi(X) = \chi(X^\mud) \mod 2$.

\item \label{prop:E_fix:2} If $n$ is odd, then we have $\chi(X) = \chi(X^\mud) \mod 4$.
\end{enumerate}
\end{proposition}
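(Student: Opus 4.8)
The plan is to push the first identity of \rref{th:Lmod2} into the Euler-number theory $\CHx$ and to read off the answer modulo four. Applying the surjective ring morphism $\Laz \to \fund{\CHx}$ induced by $b_i \mapsto (-1)^i t^i$, the class $\lc X\rc$ becomes $\chi(X)t^n$, the subgroup $2\Laz$ maps onto $2\fund{\CHx}$, and the right-hand side transforms through the formal multiplication of $\CHx$. Since $[2]_{\CHx}(x) = 2x/(1+tx)$ by \rref{lemm:CHx_FGL}, the element $2\zeta/[2]_{\CHt}(\zeta)$ maps to $1+t\xi$, where $\xi = c_1^{\CHx}(\Oc_{\Pp(N\oplus 1)}(-1))$. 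Thus $\chi(X)t^n \equiv \lc 1 + t\xi\rc_{\CHx} \pmod{2\fund{\CHx}}$. As $n$ is odd, \rref{lemm:chi_odd} gives $\fund{\CHx}^{-n} = 2t^n\Zz$, so this is a congruence modulo $4t^n$; it therefore suffices to compute $\lc 1 + t\xi\rc_{\CHx}$ modulo $4t^n$ and to match it with $\chi(X^\mud)t^n$.

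To evaluate the right-hand side, write $h = c_1^{\CH}(\Oc_{W}(1))$ with $W = \Pp(N\oplus 1)$. By \rref{p:c1_CHx} one has $1+t\xi = 2-(1-th)^{-1}$, and since the $\CHx$-pushforward to $\Spec k$ is the Chow pushforward twisted by $\sum_i t^i c_i^{\CH}(\Tan_{W})$, extracting the top-degree part yields
\[
\lc 1 + t\xi\rc_{\CHx} = \Big(2\chi(W) - \int_W \frac{c(\Tan_W)}{1-h}\Big)\, t^n,
\]
where $\int_W$ denotes the degree of the codimension-$n$ component. The divisor $\Pp(N)\subset W$ is the zero locus of the section of $\Oc_W(1)$ coming from the summand $1$, so $h = [\Pp(N)]$ and the normal bundle of $\Pp(N)$ in $W$ is $\Oc_{\Pp(N)}(1)$. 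Writing $(1-h)^{-1} = 1 + h/(1-h)$ and localising the second summand on $\Pp(N)$ gives, with $h' = c_1^{\CH}(\Oc_{\Pp(N)}(1))$,
\[
\int_W \frac{c(\Tan_W)}{1-h} = \chi(W) + \int_{\Pp(N)} c(\Tan_{\Pp(N)})\,\frac{1 + h'}{1 - h'} = \chi(W) + \chi(\Pp(N)) + 2R,
\]
where $R = \int_{\Pp(N)} c(\Tan_{\Pp(N)})\sum_{j\geq 1}(h')^j$. The fibres of $W$ and $\Pp(N)$ over the codimension-$r$ part of $X^\mud$ being $\Pp^r$ and $\Pp^{r-1}$, \rref{lemm:chi_P} gives $\chi(W) - \chi(\Pp(N)) = \chi(X^\mud)$, whence $\lc 1 + t\xi\rc_{\CHx} = (\chi(X^\mud) - 2R)t^n$.

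It remains to show that the integer $R$ is even, and this is exactly where the mod-two theory re-enters. By \rref{lemm:inv_blowup} the blowup $Y$ of $X^\mud$ in $X$ carries a $\mud$-action whose fixed locus $Y^\mud = \Pp(N)$ is a divisor, and the exceptional divisor satisfies $\Oc_Y(Y^\mud)|_{Y^\mud} = \Oc_{\Pp(N)}(-1)$. Hence, modulo two, $h'$ agrees with the restriction of $\eta = c_1(\Oc_Y(Y^\mud))$, and the projection formula (using $i_*1 = \eta$ from \rref{p:c1_Cartier} and $c(\Tan_{Y^\mud}) = i^*c(\Tan_Y)/(1+i^*\eta)$ for $i\colon Y^\mud \to Y$) turns each degree $\deg\big(c(\Tan_{Y^\mud})(h')^j\big) \in \Fd$ into $\deg\big(c(\Tan_Y)\,\eta^{j+1}/(1+\eta)\big)$, an $\Fd$-combination of $\deg(P(c(\Tan_Y))\,\eta^m)$ with $m\geq 1$. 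These all vanish by \rref{lemm:pure}, since they are the $b_\alpha$-coefficients of $\lc \eta^m\rc = 0 \in \Ld$ (using \rref{lemm:c_alpha_inv} to pass between $c_\alpha(-\Tan_Y)$ and Chern monomials). Therefore $R\equiv 0 \pmod 2$, so $\lc 1 + t\xi\rc_{\CHx}\equiv \chi(X^\mud)t^n \pmod{4t^n}$, and combining with the first paragraph yields $\chi(X)\equiv\chi(X^\mud)\pmod 4$. I expect the main obstacle to be precisely the last step: the localisation identity isolating the factor $2R$ and the proof that $R$ is even. This is the crux, as it is what forces the re-use of \rref{lemm:pure} for the fixed divisor of the blowup and is the reason the odd-dimensional refinement lands modulo four rather than only modulo two.
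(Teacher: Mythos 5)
Your proof is correct, and its first half is exactly the paper's: both transport the first identity of \rref{th:Lmod2} into the theory $\CHx$, where $2x/[2]_{\CHx}(x)=1+tx$ by \rref{lemm:CHx_FGL}, and both extract the conclusion from the structure of $\fund{\CHx}$ given by \rref{lemm:chi_odd}. You diverge in the evaluation of $\lc 1+t\xi\rc$ (keeping your notation $\xi=c_1^{\CHx}(\Oc_{\Pp(N\oplus1)}(-1))$, and writing $\xi_+=c_1^{\CHx}(\Oc_{\Pp(N\oplus1)}(1))$). The paper never leaves $\CHx$: it rewrites the class as $\lc(1-3t\xi_+)(1-2t\xi_+)^{-1}\rc=\lc\Pp(N\oplus1)\rc-t\lc\xi_+\rc-2t^2\lc\xi_+^2\rc-\cdots$, discards everything past the second term because $\lc\xi_+^m\rc\in\fund{\CHx}$ and \rref{p:2Cc} puts those tails in $2\fund{\CHx}$, and evaluates the two surviving terms exactly via $\lc\xi_+\rc=\lc\Pp(N)\rc$ (from \rref{p:c1_Cartier}) and \rref{lemm:chi_P}. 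You instead unwind the pushforward at the level of ordinary Chow groups, localise on the divisor $\Pp(N)$, and are left holding the integer $R$, whose evenness you prove by a second, independent cobordism argument (\rref{lemm:pure} applied to the blowup of $X^\mud$ in $X$). That argument is sound --- and it works for every $n$ --- but it is avoidable: since $t\xi+t\xi_+=-2t^2\xi_+^2(1-2t\xi_+)^{-1}$ by \rref{lemm:CHx_FGL}, your computation amounts to $Rt^n=t^2\lc\xi_+^2(1-2t\xi_+)^{-1}\rc$, which lies in $\fund{\CHx}$ (by the proposition quoted from \cite[Corollary~9.10]{Mer-Ori} together with \rref{p:2Cc}); for odd $n$ one has $\fund{\CHx}^{-n}=2t^n\Zz$, so $R$ is automatically even. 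In other words, the mod-four refinement is already encoded in the coefficient ring of $\CHx$, and your final ``crux'' re-derives geometrically, via the $\Ld$-relations behind \rref{lemm:pure}, what the paper gets for free from \rref{lemm:chi_odd}. What your route buys is an explicit geometric identity, $\chi(X)\equiv\chi(\Pp(N\oplus1))-\chi(\Pp(N))-2R \bmod 4$ with $R$ a concrete Chern number of the pair $(\Pp(N),\Oc(1))$; what the paper's buys is brevity and no second appeal to the $\mud$-action.

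Two details to make explicit. First, in the $R$-even step, \rref{lemm:pure} literally asserts $\lc c_1^{\Cht}(\Oc_Y(Y^\mud))^m\rc=0\in\Ld$, and $c_1^{\Cht}=\exp(c_1^{\Ch})$ by \rref{p:c1_tilde}; to obtain $\deg\big(c_\alpha(-\Tan_Y)\,\eta^m\big)=0$ for the Chow-theoretic class $\eta$ you must first express $\eta^m$ as an $\Fd[\bb]$-linear combination of the powers $\exp(\eta)^{m'}$ with $m'\geq m$ (a triangular change of basis among nilpotent powers --- the same device as the descending induction in the paper's proof of \rref{prop:KS}); your sentence ``they are the $b_\alpha$-coefficients of $\lc\eta^m\rc=0$'' elides this inversion, though it is routine. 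Second, you only treat odd $n$; part (i) for arbitrary $n$ follows from the same display, since $2\fund{\CHx}^{-n}\subseteq 2t^n\Zz$ in every degree, and deserves a sentence.
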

\begin{proof}
Write $X^\mud$ as the disjoint union of the schemes $F^0,\cdots,F^n$, where $F^r$ has pure dimension $n-r$. Let $N$ be the normal bundle to $X^\mud \to X$ and write $\xi = c_1^{\CHx}(\Oc_{\Pp(N\oplus1)}(1)) \in \CHx(\Pp(N\oplus1))$. By \rref{th:Lmod2} and \rref{lemm:CHx_FGL}, in view of \eqref{p:2Cc} we have in $\fund{\CHx}^{-n}/2$ 
\[
\lc X \rc = \Big\lc 2\frac{[-1]_{\CHx}(\xi)}{[-2]_{\CHx}(\xi)} \Big\rc = \Big\lc2 \frac{-\xi(1-2t\xi)^{-1}}{-2\xi(1-3t\xi)^{-1}} \Big\rc =\Big\lc \frac{1-3t\xi}{1-2t\xi} \Big\rc = \lc \Pp(N\oplus 1) \rc - t\lc \xi \rc.
\]
Now, by \rref{lemm:chi_P}, we have in $\fund{\CHx}^{-n}$
\[
\lc \Pp(N \oplus 1) \rc = \sum_{r=0}^n t^r (r+1) \lc F^r \rc \quad \text{ and } \quad \lc \xi \rc = \lc \Pp(N) \rc = \sum_{r=1}^n t^{r-1} r \lc F^r \rc,
\]
hence $\lc X \rc = \lc F^0 \rc +t\lc F^1 \rc +\cdots + t^n\lc F^n \rc$ in $\fund{\CHx}^{-n}/2$. Applying the ring morphism $\fund{\CHx} \to \Zz$ given by $t \mapsto 1$ yields the statements, since by \rref{lemm:chi_odd} the image of $\fund{\CHx}^{-n}$ is contained in $2\Zz$ when $n$ is odd.
\end{proof}

\begin{lemma}
\label{lemm:nonzerosection}
Assume that $k$ is infinite. Let $S$ be a quasi-projective $k$-scheme and $V \to S$ a vector bundle of rank $r > \dim S$. Then for every $s \in \Zz-\{0\}$, we may find a line bundle $L$ fitting into an exact sequence of vector bundles over $S$
\[
0 \to W \to V \to L^{\otimes s} \to 0.
\]
\end{lemma}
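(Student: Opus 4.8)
The plan is to produce the quotient $V \twoheadrightarrow L^{\otimes s}$ as a nowhere-vanishing global section of the rank $r$ vector bundle $\operatorname{Hom}(V, L^{\otimes s}) = V^\vee \otimes L^{\otimes s}$. Indeed, a global section of $V^\vee\otimes L^{\otimes s}$ which is nonzero at every point of $S$ is the same datum as an $\Oc_S$-linear map $V \to L^{\otimes s}$ that is surjective on every fibre; since the target is a line bundle, such a map is a surjection of vector bundles, and its kernel $W$ is automatically a subbundle, yielding the exact sequence. Thus it suffices to exhibit, for a suitable $L$, a nowhere-vanishing section of $V^\vee \otimes L^{\otimes s}$, a bundle of rank $r > \dim S$.

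First I would dispose of the sign of $s$: replacing $L$ by $L^\vee$ turns the problem for $s$ into the one for $-s$, because $(L^\vee)^{\otimes s} = L^{\otimes(-s)}$. Hence I may assume $s > 0$. Next, using that $S$ is quasi-projective (so that it is Noetherian and carries an ample line bundle $\Oc_S(1)$), I would set $L = \Oc_S(m)$ with $m$ large enough that $E := V^\vee \otimes L^{\otimes s} = V^\vee \otimes \Oc_S(ms)$ is globally generated; this holds for $ms \gg 0$ by ampleness, and $ms \to \infty$ as $m \to \infty$ since $s>0$.

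The core of the argument is then the existence of a nowhere-vanishing section of the globally generated bundle $E$ of rank $r > \dim S$ over the infinite field $k$. Since $S$ is Noetherian, finitely many sections generate $E$; let $U \subseteq H^0(S,E)$ be their $k$-span, so that the evaluation map $U \otimes_k \Oc_S \to E$ is a surjection of vector bundles. Its kernel $K$ is a subbundle of rank $\dim_k U - r$, and the incidence scheme $Z = \{(x,u) \in S \times_k \mathbb{A}(U) : u(x) = 0\}$ is precisely the total space of $K$; hence $\dim Z = \dim S + \dim_k U - r < \dim_k U$, the strict inequality being exactly the hypothesis $r > \dim S$. Therefore the projection $Z \to \mathbb{A}(U)$ cannot be dominant, so its image lies in a proper closed subset $C \subsetneq \mathbb{A}(U)$. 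As $k$ is infinite, the nonempty open complement of $C$ contains a $k$-rational point $u$, and for this $u$ the fibre $Z_u$ is empty, that is, the corresponding section $u \in U \subseteq H^0(S,E)$ vanishes nowhere on $S$.

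The step I expect to be the main obstacle is this last one, and it is precisely where both hypotheses enter: the condition $r > \dim S$ is what forces the ``bad'' locus $C$ in parameter space to be a proper subset, while the infiniteness of $k$ is what upgrades the resulting nonempty open of affine space to an honest $k$-rational (as opposed to merely geometric) nowhere-vanishing section. The reductions carried out in the first two paragraphs, by contrast, are formal.
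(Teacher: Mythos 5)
Your proof is correct and follows essentially the same route as the paper's: realise the quotient as a nowhere-vanishing section of $V^\vee$ twisted by a large power of an ample line bundle, identify the incidence locus with the total space of the kernel of the evaluation map, and use the dimension count $r > \dim S$ together with the infinitude of $k$ to find a rational point outside the image. The only (immaterial) difference is bookkeeping for the sign of $s$: you reduce to $s>0$ by replacing $L$ with $L^\vee$, whereas the paper simply takes the twisting exponent $m$ divisible by $s$ and sets $L = A^{\otimes(m/s)}$.
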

\begin{proof}
Let $A$ be an ample line bundle over $S$. Then for any large enough integer $m$, the vector bundle $G=V^\vee\otimes A^{\otimes m}$ is generated by its global sections. Fix such an $m$ divisible by $s$, and a finite dimensional $k$-vector space $\Sigma \subset H^0(X,G)$ generating $G$. The kernel $K$ of the surjective morphism of vector bundles $S \times \Sigma \to G$ is a vector bundle whose rank is $\dim \Sigma - r$. Then $\dim K = \dim S + \dim \Sigma -r < \dim \Sigma$, hence the composite $K \to S \times \Sigma \to \Sigma$ is not dominant. Thus there is a nonempty open subscheme $U$ of $\Sigma$ such that $K \cap (S \times U) = \varnothing$. It follows that every $k$-rational point of $U$ is a nowhere vanishing section of $G$. Since $U$ is a nonempty open subscheme of an affine space over the infinite field $k$, it admits a $k$-rational point, so that $G$ admits a nowhere vanishing section. This gives a surjective morphism of vector bundles $V \to A^{\otimes m}$, so we let $W$ be its kernel and $L=A^{\otimes (m/s)}$.
\end{proof}

\begin{lemma}
\label{lemm:CHx_rank_dim}
Let $S$ be a smooth projective $k$-scheme of pure dimension $d$ and $V \to S$ be a vector bundle of rank $r$. Then for any $0 \leq m \leq r -d$, we have
\[
\lc c_1^{\CHx}(\Oc_{\Pp(V)}(1))^m \rc = (r-m)t^{r-1-m}\lc S \rc \in \fund{\CHx} \subset \Zz[t].
\]
\end{lemma}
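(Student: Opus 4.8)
The plan is to factor the invariant $\lc-\rc$ as $q^{\CHx}_*\circ p^{\CHx}_*$, where $p\colon \Pp(V)\to S$ is the projective bundle and $q\colon S\to\Spec k$ the structure morphism, and to exploit the grading of $\CHx(S)=\CH(S)[t]$. The decisive structural remark is that, by the definition of $\CHx$, for $\gamma\in\CH^a(S)$ one has
\[
q^{\CHx}_*(\gamma\, t^b)=\sum_{i}\deg\!\big(\gamma\, c^{\CH}_i(\Tan_S)\big)\,t^{b+i}\in\Zz[t],
\]
which vanishes as soon as $a>d=\dim S$ (there is then no $i\ge 0$ with $a+i=d$), while $q^{\CHx}_*(1)=\deg(c^{\CH}_d(\Tan_S))\,t^d=\lc S\rc$. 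Thus only the components of $\CH$-degree $\le d$ of $p^{\CHx}_*\big(c_1^{\CHx}(\Oc_{\Pp(V)}(1))^m\big)\in\CH(S)[t]$ can contribute, and the whole problem reduces to pinning down these low-degree components.

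To compute the inner pushforward I would apply \rref{prop:Quillen} to the theory $\CHt$ and then specialise along the graded coefficient morphism $b_i\mapsto(-1)^it^i$ that defines $\CHx$ (this computes $p^{\CHx}_*$ by \rref{ex:tensor_oct}). Writing $\ell_1,\dots,\ell_r$ for the Chern roots of $V$, which is legitimate by the splitting principle \rref{p:splitting}, the series $\exp y$ specialises to $y/(1+ty)$ and $P^{\CH}(-V\{y\})$ to $\prod_{j}(1+t\ell_j+ty)$, yielding
\[
p^{\CHx}_*\big(c_1^{\CHx}(\Oc_{\Pp(V)}(1))^m\big)=\sum_{i} c^{\CH}_i(-V)\,\Res_y\Big(y^{m-r-i}(1+ty)^{-m}\textstyle\prod_{j=1}^r(1+t\ell_j+ty)\Big).
\]
Using $\sum_i c^{\CH}_i(-V)y^{-i}=y^{r}/\prod_j(y+\ell_j)$ together with $\tfrac{1+t\ell_j+ty}{y+\ell_j}=t+\tfrac{1}{y+\ell_j}$, this collapses to the single residue $\Res_y\big((\tfrac{y}{1+ty})^m\prod_{j=1}^r(t+\tfrac{1}{y+\ell_j})\big)$, a form well suited to isolating each $\CH$-degree.

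The heart of the argument is to show that the component of $\CH$-degree $a$ of this expression vanishes for $1\le a\le r-m$, the degree-zero component being $(r-m)t^{\,r-1-m}$. Extracting the degree-$a$ part gives
\[
t^{\,a+r-m-1}\sum_{i}\binom{\,r-a+i-m\,}{\,r+i-m-1\,}\,c^{\CH}_i(-V)\,c^{\CH}_{a-i}(V):
\]
for $2\le a\le r-m$ each binomial coefficient vanishes, its upper index $r-a+i-m$ being $\ge 0$ yet strictly below its lower index; for $a=1$ all binomials equal $1$ and the sum is the degree-one part of $c^{\CH}(-V)c^{\CH}(V)=1$, hence $0$; and for $a=0$ only $i=0$ survives, giving $(r-m)t^{\,r-1-m}$. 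I expect this combinatorial vanishing to be the main obstacle, both to execute cleanly and to present without an avalanche of indices, which is exactly what the product form above is meant to control.

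Finally, the hypothesis $m\le r-d$ enters precisely here: it is equivalent to $d\le r-m$, so every degree $a$ with $1\le a\le d$ lies in the vanishing range, and the only component of $\CH$-degree $\le d$ that survives is the constant $(r-m)t^{\,r-1-m}$. All remaining components have $\CH$-degree $>d$ and are therefore annihilated by $q^{\CHx}_*$ by the first paragraph. Applying $q^{\CHx}_*$ and using $q^{\CHx}_*(1)=\lc S\rc$ then gives
\[
\lc c_1^{\CHx}(\Oc_{\Pp(V)}(1))^m\rc=(r-m)t^{\,r-1-m}\lc S\rc,
\]
as required (for $m=0$ one may instead invoke \rref{lemm:chi_P} directly and push forward along $q$).
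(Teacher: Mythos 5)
Your proof is correct, but it takes a genuinely different route from the paper's. The paper argues by induction on $m$ (base case $m=0$ being \rref{lemm:chi_P}): after reducing to $k$ infinite, it uses \rref{lemm:nonzerosection} to produce, for every nonzero integer $s$, an exact sequence $0 \to W \to V \to L^{\otimes s} \to 0$; since $c_1^{\CHx}(L^{\otimes s})$ is divisible by $s$ by \rref{lemm:CHx_FGL}, the divisor class $[\Pp(W)]$ is congruent to $c_1^{\CHx}(\Oc_{\Pp(V)}(1))$ modulo $s$, whence $\lc c_1^{\CHx}(\Oc_{\Pp(V)}(1))^m \rc \equiv \lc c_1^{\CHx}(\Oc_{\Pp(W)}(1))^{m-1} \rc \bmod s$; the discrepancy with the inductive value is divisible by every nonzero $s$, hence zero. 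You instead evaluate the pushforward head-on from \rref{prop:Quillen} under the specialisation $b_i \mapsto (-1)^it^i$, collapse the sum over $i$ into the single residue $\Res_y\big((y/(1+ty))^m\prod_j(t+(y+\ell_j)^{-1})\big)$, and extract its $\CH$-degree-$a$ component $t^{a+r-m-1}\sum_i\binom{r-a+i-m}{r+i-m-1}c_i^{\CH}(-V)c_{a-i}^{\CH}(V)$. I verified this formula and the vanishing you assert: for $2\le a\le r-m$ each binomial has nonnegative upper index strictly smaller than its lower index, hence vanishes termwise; for $a=1$ all binomials equal $1$ and the sum is the degree-one part of $c(V)c(-V)=1$; for $a=0$ one gets $(r-m)t^{r-1-m}$; and the hypothesis $m\le r-d$ enters exactly as you say, placing every degree $1\le a\le d$ inside the vanishing range so that only the degree-zero term survives $q_*^{\CHx}$. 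As to what each approach buys: the paper's geometric induction avoids all residue combinatorics, at the cost of the auxiliary input \rref{lemm:nonzerosection}, the passage to an infinite field, and the divisible-by-every-$s$ trick; your computation is purely formal given \rref{prop:Quillen}, needs no hypothesis on $k$, and actually proves the stronger statement that $p_*^{\CHx}\big(c_1^{\CHx}(\Oc_{\Pp(V)}(1))^m\big)\in\CH(S)[t]$ equals $(r-m)t^{r-1-m}$ up to terms of $\CH$-degree strictly greater than $r-m$, before any pushforward to the point --- at the price of the binomial bookkeeping (the edge case $r=m$, where some binomial conventions degenerate, is harmless since it forces $d=0$ and no vanishing in degrees $a\ge 1$ is then needed).
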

\begin{proof}
We may assume that $k$ is infinite. We proceed by induction on $m$, the case $m=0$ being \rref{lemm:chi_P}. Let $m>0$, and $s \in \Zz-\{0\}$. By \rref{lemm:nonzerosection} there is an exact sequence $0 \to W \to V \to L^{\otimes s} \to 0$ with $L$ a line bundle. By \rref{lemm:CHx_FGL}, we have $c_1^{\CHx}(L^{\otimes s}) \in s \CHx(S)$. The line bundle of the effective Cartier divisor $\Pp(W) \to \Pp(V)$ is $p^*L^{\otimes s}(1)$, where $p\colon \Pp(V) \to S$ is the projective bundle, hence in view of \rref{p:c1_Cartier}
\[
[\Pp(W)] = c_1^{\CHx}(\Oc_{\Pp(V)}(1)) +_{\CHx} c_1^{\CHx}(p^*L^{\otimes s}) = c_1^{\CHx}(\Oc_{\Pp(V)}(1)) \in \CHx(\Pp(V))/s.
\]
Multiplying with $c_1(\Oc_{\Pp(V)}(1))^{m-1}$ and projecting to $\Spec k$, we obtain
\[
\lc c_1^{\CHx}(\Oc_{\Pp(W)}(1))^{m-1} \rc = \lc c_1^{\CHx}(\Oc_{\Pp(V)}(1))^m \rc \in \CHx(\Spec k)/s = \Zz[t]/s.
\]
Using the induction hypothesis on the bundle $W$, we deduce that $\lc c_1^{\CHx}(\Oc_{\Pp(V)}(1))^m \rc - (r-m)t^{r-1-m}\lc X \rc \in s\Zz[t]$. This element is divisible in $\Zz[t]$, hence vanishes.
\end{proof}

\begin{theorem}
\label{th:E_4}
Let $X$ be a smooth projective $k$-scheme of pure dimension $n$ with a $\mud$-action. If $2 \dim X^\mud < n-1$, then $\chi(X^\mud)$ is divisible by four.
\end{theorem}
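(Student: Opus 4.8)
The plan is to read Theorem~\rref{th:Lmod2} inside the theory $\CHx$, where every class in sight is governed by Euler numbers, and then to upgrade the resulting mod~$2$ congruences to mod~$4$ by means of the graded structure of $\fund{\CHx}$ recorded in \rref{lemm:chi_odd}. First I would fix notation: write $X^\mud = \bigsqcup_r F^r$ with $F^r$ the part of pure codimension $r$, set $\chi = \chi(X^\mud) = \sum_r \chi(F^r)$, and abbreviate $\xi = c_1^{\CHx}(\Oc_{\Pp(N\oplus 1)}(1))$, $\zeta = c_1^{\CHx}(\Oc_{\Pp(N\oplus 1)}(-1))$. The hypothesis $2\dim X^\mud < n-1$ says precisely that every nonempty $F^r$ has $2r \geq n+2$; as $2r$ is even this forces $2r+1-n \geq 3$ always, and $2r+1-n \geq 4$ when $n$ is odd. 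Applying \rref{lemm:CHx_rank_dim} to $N|_{F^r}\oplus 1$ (of rank $r+1$ over $F^r$) together with \rref{lemm:chi_P}, this range of validity yields
\[
\lc \xi^k \rc = \sigma_k\, t^{n-k}, \qquad \sigma_k := \sum_r (r+1-k)\chi(F^r) = \sigma_0 - k\chi ,
\]
for $0 \leq k \leq 3$ if $n$ is even and $0 \leq k \leq 4$ if $n$ is odd; here $\sigma_0 = \chi(\Pp(N\oplus 1))$.

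Next I would trace Theorem~\rref{th:Lmod2} through the (surjective) structure map $\Laz \to \fund{\CHx}$, so that its relations hold modulo $2\fund{\CHx}$. By \rref{lemm:CHx_FGL} one has $[2]_{\CHx}(\zeta) = 2\zeta/(1+t\zeta)$, whence $2\zeta^{m+1}/[2]_{\CHx}(\zeta) = \zeta^m(1+t\zeta)$, and the relations for $m \geq 1$ become
\[
\lc \zeta^m \rc + t\lc \zeta^{m+1} \rc \equiv 0 \pmod{2\fund{\CHx}}, \qquad m \geq 1 .
\]
Writing $\lc \zeta^m \rc = \omega_m\, t^{n-m} \in \fund{\CHx}^{-(n-m)}$ and substituting $\zeta = [-1]_{\CHx}(\xi) = -\xi/(1-2t\xi)$ (\rref{lemm:CHx_FGL}) into the previous display gives the bridge $\omega_m \equiv (-1)^m\big(\sigma_m + 2m\,\sigma_{m+1}\big) \pmod 4$, all $\sigma$'s occurring being in the valid range. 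The decisive extra input is \rref{lemm:chi_odd}: since $\fund{\CHx}$ is generated by $2t$ and $t^2$, any element of $\fund{\CHx}^{-j}$ with $j$ odd has even coefficient. Thus $\omega_0$ is even when $n$ is odd and $\omega_1$ is even when $n$ is even; moreover on an odd-degree component the displayed relation is automatically a congruence modulo $4$. Note that only the relations with $m\geq 1$ are used, so $\chi(X)$ never enters and the bound is intrinsic to $X^\mud$.

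It then remains to run the two parity cases. If $n$ is odd I take $m=2$: the relation lives in $\fund{\CHx}^{-(n-2)}$ with $n-2$ odd, so $\omega_2 + \omega_3 \equiv 0 \pmod 4$; the bridge gives $\omega_2+\omega_3 \equiv \sigma_2 - \sigma_3 - 2\sigma_4 \equiv \chi - 2\sigma_0 \pmod 4$ (using $\sigma_k - \sigma_{k+1} = \chi$ and $\sigma_4 \equiv \sigma_0 \pmod 2$), and since $\sigma_0 = \omega_0$ is even this reads $\chi \equiv 0 \pmod 4$. If $n$ is even I take $m=1$: here $n-1$ is odd, so $\omega_1 + \omega_2 \equiv 0 \pmod 4$, which expands to $-(\sigma_1+\sigma_2) = -(2\sigma_0 - 3\chi) \equiv 0 \pmod 4$; reducing modulo $2$ first shows $\chi$ is even, and then $\sigma_0 = \sigma_1 + \chi$ is even (as $\omega_1$ is even and $\sigma_1 \equiv -\omega_1 \pmod 2$), so $2\sigma_0 \equiv 0$ and again $\chi \equiv 0 \pmod 4$.

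The main obstacle is the passage from mod~$2$ to mod~$4$: the traced relations of Theorem~\rref{th:Lmod2} hold only modulo $2\fund{\CHx}$, while the conclusion is a statement modulo $4$. The whole argument turns on \rref{lemm:chi_odd}, which on the odd-graded pieces simultaneously sharpens these congruences to congruences modulo $4$ and forces the auxiliary Euler number $\sigma_0$ (respectively $\sigma_1$) to be even, so that the parasitic term $2\sigma_0$ drops out. The only remaining care is bookkeeping: one must check that every $\sigma_k$ surviving modulo $4$ lies in the range where \rref{lemm:CHx_rank_dim} applies. This is exactly what the hypothesis supplies, and it is tight in the odd case, where $\sigma_4$ is needed and the hypothesis yields precisely $2r+1-n \geq 4$.
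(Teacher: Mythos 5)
Your proof is correct and follows essentially the same route as the paper's: both trace the $m\geq 1$ relations of \rref{th:Lmod2} into the theory $\CHx$ via \rref{lemm:CHx_FGL}, evaluate the resulting classes with \rref{lemm:chi_P} and \rref{lemm:CHx_rank_dim}, split on the parity of $n$ using the relations $m=1$ and $m=2$, and get the mod~$4$ conclusion from \rref{lemm:chi_odd}, which makes the odd-degree graded pieces of $\fund{\CHx}$ equal to $2t^j\Zz$. The only divergence is bookkeeping: the paper reduces mod~$2$ at once, turning the relations into $\lc\xi\rc+t\lc\xi^2\rc\equiv 0$ and $\lc\xi^2\rc+t\lc\xi^3\rc\equiv 0$ modulo $2\fund{\CHx}$ so that only $\lc\xi^m\rc$ with $m\leq 3$ is needed, whereas you carry congruences mod~$4$ in $\zeta$, which costs the extra class $\lc\xi^4\rc$ in the odd case (still within the range of \rref{lemm:CHx_rank_dim}, as you verify) and the auxiliary evenness of $\sigma_0$ and $\sigma_1$.
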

\begin{proof}
Let $N$ be the normal bundle to $X^\mud \to X$ and write $\xi = c_1(\Oc_{\Pp(N\oplus 1)}(1))$ in $\CHx(\Pp(N\oplus 1))$. By \rref{th:Lmod2} and \rref{lemm:CHx_FGL}, in view of \eqref{p:2Cc} we have in $\fund{\CHx}/2$ 
\begin{equation}
\label{eq:xi_xid}
0=\Big\lc 2\frac{[-1]_{\CHx}(\xi)^2}{[-2]_{\CHx}(\xi)} \Big\rc = \Big\lc 2\frac{(-\xi)^2(1-2t\xi)^{-2}}{-2\xi(1-3t\xi)^{-1}} \Big\rc =-\lc \xi \rc - t\lc \xi^2 \rc,
\end{equation}
\begin{equation}
\label{eq:xi_xid2}
0=\Big\lc 2\frac{[-1]_{\CHx}(\xi)^3}{[-2]_{\CHx}(\xi)} \Big\rc = \Big\lc 2\frac{(-\xi)^3(1-2t\xi)^{-3}}{-2\xi(1-3t\xi)^{-1}} \Big\rc =\lc \xi^2 \rc - t\lc \xi^3 \rc.
\end{equation}
Let $d=\dim X^\mud$ and write $X^\mud$ as the disjoint union of the schemes $F^{n-d},\cdots,F^n$, where $F^r$ has pure dimension $n-r$. Since $n>2d$ by assumption, for every $r=n-d,\cdots,n$, the vector bundle $N^r=N|_{F^r}$ has rank $r>  \dim F^r$. Applying \rref{lemm:CHx_rank_dim} for $m=1$ and $m=2$ to each bundle $N^r \oplus 1$ yields in $\fund{\CHx}^{1-n}/2$
\[
\lc \xi \rc - t\lc \xi^2 \rc = \sum_{r=n-d}^n rt^{r-1} \lc F^r \rc - \sum_{r=n-d}^n (r-1)t^{r-1} \lc F^r \rc = \sum_{r=n-d}^n t^{r-1} \lc F^r \rc.
\]
This element vanishes in $\fund{\CHx}^{1-n}/2$ by \eqref{eq:xi_xid}. If $n$ is even, the ring morphism $\fund{\CHx} \to \Zz$ given by $t \mapsto 1$ maps $\fund{\CHx}^{1-n}$ to $2 \Zz$ by \rref{lemm:chi_odd}, hence $\chi(X^\mud) = \chi(F^{n-d}) +\cdots +\chi(F^n) \in 4\Zz$, concluding the proof in that case.

Now assume that $n$ is odd. Since $2d <n-1$, each $N^r$ has rank $r> \dim F^r +1$. Applying \rref{lemm:CHx_rank_dim} for $m=2$ and $m=3$ to each bundle $N^r\oplus 1$ yields in $\fund{\CHx}^{2-n}/2$
\[
\lc \xi^2 \rc - t\lc \xi^3 \rc = \sum_{r=n-d}^n (r-1)t^{r-2} \lc F^r \rc - \sum_{r=n-d}^n (r-2)t^{r-2} \lc F^r \rc = \sum_{r=n-d}^n t^{r-2} \lc F^r \rc.
\]
This element vanishes in $\fund{\CHx}^{2-n}/2$ by \eqref{eq:xi_xid2}. The ring morphism $\fund{\CHx} \to \Zz$ given by $t \mapsto 1$ maps $\fund{\CHx}^{2-n}$ to $2 \Zz$ by \rref{lemm:chi_odd}, hence $\chi(X^\mud) = \chi(F^{n-d}) +\cdots +\chi(F^n) \in 4\Zz$.
\end{proof}

\begin{corollary}
\label{cor:E_dim}
Let $X$ be a smooth projective $k$-scheme of pure dimension $n$ with a $\mud$-action.
\begin{enumerate}[(i)]
\item \label{cor:E_dim:1} If $\chi(X)$ is odd, then $2 \dim X^\mud \geq n$.

\item \label{cor:E_dim:2} If $n$ is odd and $\chi(X)$ is not divisible by four, then $2 \dim X^\mud +1 \geq n$.
\end{enumerate}
\end{corollary}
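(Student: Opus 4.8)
The plan is to read this corollary off from Theorem~\rref{th:E_4} and Proposition~\rref{prop:E_fix}, arguing by contraposition in each case. Part \dref{cor:E_dim}{cor:E_dim:2} is the most direct: assume $n$ is odd and $2\dim X^\mud + 1 < n$, that is, $2\dim X^\mud < n-1$. Then Theorem~\rref{th:E_4} applies and shows that $\chi(X^\mud)$ is divisible by four. Since $n$ is odd, the refined congruence \dref{prop:E_fix}{prop:E_fix:2} gives $\chi(X)\equiv\chi(X^\mud)\pmod 4$, so that $\chi(X)$ is divisible by four. This is exactly the contrapositive of \dref{cor:E_dim}{cor:E_dim:2}.

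For part \dref{cor:E_dim}{cor:E_dim:1}, I assume $2\dim X^\mud < n$ and must show that $\chi(X)$ is even, distinguishing the parity of $n$. If $n$ is odd, then $\lc X\rc=\chi(X)t^n$ lies in $\fund{\CHx}^{-n}$, which equals $2\Zz\cdot t^n$ by Lemma~\rref{lemm:chi_odd}; hence $\chi(X)$ is already even (the familiar fact that an odd-dimensional variety has even Euler number), and there is nothing further to prove. If $n$ is even, then since $2\dim X^\mud$ is an even integer strictly smaller than $n$ we in fact have $2\dim X^\mud\leq n-2<n-1$; Theorem~\rref{th:E_4} then shows that $\chi(X^\mud)$ is divisible by four, and the first part of Proposition~\rref{prop:E_fix} gives $\chi(X)\equiv\chi(X^\mud)\equiv 0\pmod 2$. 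In either case $\chi(X)$ is even, which is the contrapositive of \dref{cor:E_dim}{cor:E_dim:1}.

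I do not expect a substantial obstacle, as the real work is carried out in Theorem~\rref{th:E_4}; the only point that needs attention is the borderline in part \dref{cor:E_dim}{cor:E_dim:1}. There the bare hypothesis $2\dim X^\mud<n$ is too weak to feed into Theorem~\rref{th:E_4}, which demands the strict inequality $2\dim X^\mud<n-1$. The parity dichotomy is precisely what repairs this: when $n$ is even the integrality of $\dim X^\mud$ widens the gap to $2\dim X^\mud\leq n-2$, and when $n$ is odd the conclusion is automatic from Lemma~\rref{lemm:chi_odd}. No computation beyond invoking these three cited results is required.
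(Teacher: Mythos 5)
Your proof is correct and takes essentially the same route as the paper, which likewise derives the corollary by combining \rref{prop:E_fix} with \rref{th:E_4}, using \rref{lemm:chi_odd} for the parity considerations. The only (immaterial) difference is bookkeeping: the paper notes that $\chi(X)$ odd forces $n$ even and thereby upgrades the bound $2\dim X^\mud+1\geq n$ to $2\dim X^\mud\geq n$, whereas you contrapose and use the same parity fact to widen the hypothesis $2\dim X^\mud<n$ to $2\dim X^\mud<n-1$ when $n$ is even.
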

\begin{proof}
Combine \rref{prop:E_fix} with \rref{th:E_4}. Note that $n$ must be even if $\chi(X)$ is odd by \rref{lemm:chi_odd}, so that $2 \dim X^\mud +1 \geq n$ implies that $2 \dim X^\mud \geq n$.
\end{proof}

\section{Additive Chern number}
\label{sect:indecomposable}

\subsection{The theory {\texorpdfstring{$\CHa$}{A}}}

\begin{para}
We will denote by $(0)$ the empty partition. Let $n\in \Nn$. Taking the partition $\alpha=(n)$ and the theory $\Hh=\CH$ in \rref{p:def_CF}, we have a Conner--Floyd Chern class $c_{(n)}(E) \in \CH(X)$ for all $X \in \Sm_k$ and $E\in K_0(X)$. Observe that $c_{(0)}(E)=1$. If $n>0$, then $c_{(n)}(E+F) = c_{(n)}(E) + c_{(n)}(F)$ for every $E,F \in K_0(X)$, and $c_{(n)}(L) = c_1(L)^n$ for every line bundle $L \to X$. If $X$ is a smooth projective $k$-scheme of pure dimension $n$, its \emph{additive Chern number} is the integer
\begin{equation}
\label{eq:cn}
c_{(n)}(X) =  \deg (c_{(n)}(-\Tan_X)) \in \Zz.
\end{equation}
\end{para}

\begin{definition}
We consider the functor $\CHa = \CHt \otimes_{\Zz[\bb]} \Zz[t,\varepsilon]/\varepsilon^2$ where $\Zz[\bb] \to \Zz[t,\varepsilon]/\varepsilon^2$ is the morphism $b_i \mapsto \varepsilon t^i$. The element $t$ has degree $-1$ and $\varepsilon$ has degree zero. It follows from \rref{prop:tilde_is_oct}, \rref{ex:tensor_oct} and \rref{p:CH_is_oct} that $\CHa$ is an oriented cohomology theory. For any $X\in \Sm_k$, we have $\CHa(X) = \CH(X)[t,\varepsilon]/\varepsilon^2$, and for any morphism $f\colon Y \to X$ in $\Sm_k$ we have $f^*_{\CHa}=f^*_{\CH}$. If $f$ is projective with virtual tangent bundle $\Tan_f \in K_0(Y)$, then for any $a \in \CHa(Y)$,
\begin{equation}
\label{eq:pf_CHa}
f_*^{\CHa}(a) = f_*^{\CH} \Big( \big(1-\varepsilon \sum_{i \geq 1} c_{(i)}(\Tan_f) t^i\big) a \Big) \in \CHa(X).
\end{equation}
\end{definition}

\begin{para}
\label{p:c1_CHa}
Let $L$ be a line bundle over $X\in \Sm_k$. By \rref{p:c1_tilde} we have
\[
c_1^{\CHa}(L) = c_1^{\CH}(L) + \varepsilon \sum_{i \geq 1} t^i c_1^{\CH}(L)^{i+1} \in \CHa(X).
\]
In particular $\varepsilon c_1^{\CHa}(L)^j = \varepsilon c_1^{\CH}(L)^j \in \CHa(X)$ for any $j \in \Nn$.
\end{para}

\begin{lemma}
\label{lemm:FGL_A}
The formal group law of the theory $\CHa$ is given by
\[
x+_{\CHa} y = x + y +\varepsilon \sum_{i \geq 1} \Big( (x+y)^{i+1} - x^{i+1} -y^{i+1}\Big) t^i \in (\Zz[t,\varepsilon]/\varepsilon^2)[[x,y]].
\]
\end{lemma}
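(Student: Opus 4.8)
The plan is to reduce the statement to the computation of a composition inverse modulo $\varepsilon^2$, by invoking the general description of the formal group law of a twisted theory. Applying \rref{p:FGL_exp} with $\Hh=\CH$ (so that $\Ht=\CHt$), and using that $\CH$ is additive by \rref{p:additive}, the formal group law of $\CHt$ is
\[
x+_{\CHt} y = \exp\big(\exp^{-1}(x) + \exp^{-1}(y)\big),
\]
where $\exp$ is the power series of \rref{p:def_exp} and $\exp^{-1}$ its composition inverse. Since $\CHa$ is obtained from $\CHt$ by the base change $b_i \mapsto \varepsilon t^i$, its formal group law is the image of the above under that morphism (see \rref{ex:tensor_oct}). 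Writing $e \in (\Zz[t,\varepsilon]/\varepsilon^2)[[x]]$ for the image of $\exp$, and noting that applying a ring morphism to coefficients commutes with composition of power series with leading term $x$, I would obtain
\[
x+_{\CHa} y = e\big(e^{-1}(x) + e^{-1}(y)\big),
\]
where $e^{-1}$ is the composition inverse of $e$ (equivalently, the image of $\exp^{-1}$).

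The key step is then to make $e$ and $e^{-1}$ explicit, which is where the nilpotence of $\varepsilon$ does all the work. From the definition of $\exp$ I would record $e(x) = x + \varepsilon g(x)$, where $g(x) = \sum_{i \geq 1} t^i x^{i+1}$. Because $\varepsilon^2 = 0$, the composition inverse is immediate: setting $e^{-1}(y) = y - \varepsilon g(y)$ one checks
\[
e\big(e^{-1}(y)\big) = \big(y - \varepsilon g(y)\big) + \varepsilon\, g\big(y - \varepsilon g(y)\big) = y,
\]
since $g(y - \varepsilon g(y)) \equiv g(y) \bmod \varepsilon^2$ (every contribution from a derivative of $g$ carries a factor $\varepsilon^2$ and so vanishes).

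It then remains to substitute. One has $e^{-1}(x) + e^{-1}(y) = (x+y) - \varepsilon\big(g(x) + g(y)\big)$, and again by $\varepsilon^2 = 0$ the power series $g$ evaluated at this argument equals $g(x+y)$, whence
\[
e\big(e^{-1}(x) + e^{-1}(y)\big) = (x+y) - \varepsilon\big(g(x) + g(y)\big) + \varepsilon\, g(x+y) = x + y + \varepsilon\big(g(x+y) - g(x) - g(y)\big).
\]
Expanding $g(x+y) - g(x) - g(y) = \sum_{i \geq 1} t^i\big((x+y)^{i+1} - x^{i+1} - y^{i+1}\big)$ yields the asserted formula. I do not expect a genuine obstacle here; the only point demanding care is the systematic truncation modulo $\varepsilon^2$, ensuring that no derivative terms of $g$ survive at any stage. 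As an alternative more in the spirit of the proof of \rref{lemm:FGL_CHx}, one could argue directly with first Chern classes: using \rref{p:c1_CHa} and the additivity of $\CH$, a short computation with line bundles $L,M$ shows that $c_1^{\CHa}(L\otimes M)$ coincides with the value of the claimed power series at $\big(c_1^{\CHa}(L), c_1^{\CHa}(M)\big)$, which identifies the formal group law.
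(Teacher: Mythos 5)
Your proposal is correct, but it takes a genuinely different route from the paper. The paper's proof is exactly the alternative you sketch at the end: it takes line bundles $L,M$, writes $l=c_1^{\CH}(L)$, $m=c_1^{\CH}(M)$, and computes $c_1^{\CHa}(L\otimes M)=l+m+\varepsilon\sum_{i\geq 1}t^i(l+m)^{i+1}$ directly from \rref{p:additive} and \rref{p:c1_CHa}, then converts back to $\lambda=c_1^{\CHa}(L)$, $\mu=c_1^{\CHa}(M)$ using the identity $\varepsilon\, c_1^{\CHa}(L)^j=\varepsilon\, c_1^{\CH}(L)^j$ (the last sentence of \rref{p:c1_CHa}), which plays for it the role your explicit inversion plays for you. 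Your main route instead stays entirely at the level of power series: you invoke \rref{p:FGL_exp} for $\CHt$, push the conjugation formula through the base change $b_i\mapsto\varepsilon t^i$, and invert $e(x)=x+\varepsilon g(x)$ by hand using $\varepsilon^2=0$. That step of identifying $F_{\CHa}$ with the image of $F_{\CHt}$ under the coefficient morphism deserves a word of justification (Chern classes of the theory of \rref{ex:tensor_oct} are the images of those of $\CHt$, by naturality of the class $\xi$ in \dref{def:oct}{def:oct:PB}), but it is the same principle the paper itself uses, e.g.\ when deriving \rref{p:c1_CHx} and \rref{p:c1_CHa} from \rref{p:c1_tilde}, and your manipulation of $e$ and $e^{-1}$ mirrors the one in the proof of \rref{prop:Lp_univ}. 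What your approach buys is that it makes transparent \emph{why} the answer has the shape $x+y+\varepsilon(g(x+y)-g(x)-g(y))$: it is the first-order term of conjugating the additive law; the paper's computation is shorter but leaves that structure implicit.

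One small imprecision to fix: your parenthetical claim that $g(y-\varepsilon g(y))\equiv g(y)\bmod \varepsilon^2$ because ``every contribution from a derivative of $g$ carries a factor $\varepsilon^2$'' is off by one power of $\varepsilon$. In fact
\[
g\big(y-\varepsilon g(y)\big)= g(y)-\varepsilon\, g(y)\,g'(y) \mod \varepsilon^2,
\]
so the congruence you assert holds only modulo $\varepsilon$, not modulo $\varepsilon^2$ (take $g(x)=tx^2$ for a counterexample). This is harmless for your argument, because every occurrence of $g$ at a shifted argument sits inside $\varepsilon\, g(\cdot)$, and the outer factor $\varepsilon$ kills the first-derivative term; the same remark applies at the final substitution $\varepsilon\, g\big((x+y)-\varepsilon(g(x)+g(y))\big)=\varepsilon\, g(x+y)$. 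The displayed identities, and hence the proof, are correct as written once the justification is restated this way.
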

\begin{proof}
Let $L,M$ be line bundles over $X \in \Sm_k$. Write $l=c_1^{\CH}(L),m=c_1^{\CH}(M)$ and $\lambda = c_1^{\CHa}(L), \mu = c_1^{\CHa}(M)$ in $\CHa(X)$. By \rref{p:additive} and \rref{p:c1_CHa}, we have in $\CHa(X)$
\[
c_1^{\CHa}(L\otimes M) = l+m+\varepsilon\sum_{i\geq 1} t^i (l+m)^{i+1} = \lambda + \mu +  \varepsilon\sum_{i\geq 1} t^i \big((l+m)^{i+1} -l^{i+1} -m^{i+1}\big)
\]
and the statement follows from the last sentence of \rref{p:c1_CHa}.
\end{proof}

\begin{para}
\label{eq:A_posdim}
When $X$ is a smooth projective $k$-scheme of pure dimension $n$, we have in $\fund{\CHa} \subset \CHa(\Spec k) = \Zz[t,\varepsilon]/\varepsilon^2$
\[
\lc X\rc =\begin{cases}
c_{(n)}(X) &\text{ if $n=0$,}\\
c_{(n)}(X) \varepsilon t^n &\text{ if $n>0$.}
\end{cases}
\]
\end{para}

\begin{lemma}
\label{lemm:Acn}
We have
\[
\fund{\CHa}^{-n}=
\begin{cases}
0 & \text{ if $n<0$,} \\
\Zz &\text{ if $n=0$,} \\
p \varepsilon t^n\Zz &\text{ if $n=p^q-1$ for some prime $p$ and integer $q\geq 1$,} \\
\varepsilon t^n\Zz&\text{ otherwise.}
\end{cases}
\]
\end{lemma}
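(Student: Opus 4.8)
The plan is to identify $\fund{\CHa}$ with the image of the Lazard ring under the base-change morphism defining $\CHa$, and then to read off its graded pieces directly from the formal group law of \rref{lemm:FGL_A}. Write $\phi \colon \CHt(\Spec k) = \Zz[\bb] \to \CHa(\Spec k) = \Zz[t,\varepsilon]/\varepsilon^2$ for the ring morphism $b_i \mapsto \varepsilon t^i$. Since $\CHa = \CHt \otimes_{\Zz[\bb]} \Zz[t,\varepsilon]/\varepsilon^2$ as oriented cohomology theories (\rref{ex:tensor_oct}), the pushforward in $\CHa$ is obtained from that of $\CHt$ by applying $\phi$, so that $\lc X \rc_{\CHa} = \phi(\lc X\rc_{\CHt})$ for every smooth projective $k$-scheme $X$. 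As $\phi$ is a group morphism, this gives $\fund{\CHa} = \phi(\fund{\CHt}) = \phi(\Laz)$, and incidentally re-derives \rref{eq:A_posdim}.

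I would then invoke \rref{cor:Lazard}: the ring $\Laz$ is generated by the coefficients $a_{i,j}$ of $F_{\CHt}$, hence $\fund{\CHa}=\phi(\Laz)$ is generated as a ring by the coefficients $\phi(a_{i,j})=a_{i,j}^{\CHa}$ of $F_{\CHa}$. These are read off from \rref{lemm:FGL_A}: one has $a_{1,0}^{\CHa}=a_{0,1}^{\CHa}=1$, the remaining $a_{i,0}^{\CHa}$ and $a_{0,j}^{\CHa}$ vanish, and
\[
a_{i,j}^{\CHa} = \binom{i+j}{i}\,\varepsilon\, t^{\,i+j-1} \quad \text{for } i,j \geq 1.
\]
Every generator with $i,j\geq 1$ is divisible by $\varepsilon$, so the product of any two of them vanishes; consequently, as an abelian group, $\fund{\CHa}$ is generated by $1$ together with the elements $a_{i,j}^{\CHa}$ for $i,j \geq 1$.

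Extracting the degree $-n$ component is now mechanical. For $n<0$ the ring $\Zz[t,\varepsilon]/\varepsilon^2$ has no element of degree $-n$, so the group is $0$. For $n=0$, each generator $a_{i,j}^{\CHa}$ with $i,j\geq 1$ lies in strictly negative degree $-(i+j-1)$, so the degree-zero part is $\Zz\cdot 1 = \Zz$ (in particular $\varepsilon$ is not hit). For $n>0$ the generators of degree $-n$ are exactly those with $i+j=n+1$ and $1\leq i\leq n$, whence
\[
\fund{\CHa}^{-n} = \sum_{i=1}^{n} \Zz\,\binom{n+1}{i}\,\varepsilon t^n = \Big(\gcd_{1\leq i\leq n}\binom{n+1}{i}\Big)\,\varepsilon t^n\,\Zz.
\]

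It remains to evaluate this gcd, and I expect this purely arithmetic point to be the only genuine obstacle, everything before it being formal. The relevant classical fact (essentially Milnor's computation of the image of the $s_n$-number) is that $\gcd_{1\leq i\leq n}\binom{n+1}{i}$ equals $p$ when $n+1$ is a power of a prime $p$ and equals $1$ otherwise; substituting this into the displayed formula yields the four cases. I would settle it with Kummer's theorem on the $p$-adic valuation of binomial coefficients: when $n+1=p^q$ one has $p\mid\binom{n+1}{i}$ for all $0<i<n+1$ while $v_p\binom{p^q}{p^{q-1}}=1$, and no other prime divides every $\binom{n+1}{i}$ since $\binom{n+1}{1}=n+1$; when $n+1$ has two distinct prime factors one exhibits, for each prime $p\mid n+1$ written as $n+1=p^au$ with $p\nmid u$, the index $i=p^a$ for which $p\nmid\binom{n+1}{i}$, forcing the gcd to be $1$.
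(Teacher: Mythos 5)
Your proposal is correct and follows essentially the same route as the paper: both use \rref{cor:Lazard} to generate $\fund{\CHa}$ by the coefficients of $F_{\CHa}$ read off from \rref{lemm:FGL_A}, observe that products of nonzero-degree elements vanish (the paper via \rref{eq:A_posdim}, you via the $\varepsilon$-divisibility of the generators --- the same point, since $\varepsilon^2=0$), and reduce to $\gcd_{0<i<n+1}\binom{n+1}{i}$. The only difference is that you supply a Kummer-theorem proof of the gcd evaluation, which the paper simply states as a computation.
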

\begin{proof}
We may assume that $n>0$. By \rref{cor:Lazard}, the ring $\fund{\CHa}$ is generated by the coefficients $a_{i,j}$ of $F_{\CHa}(x,y)$. Now \rref{eq:A_posdim} implies that $\fund{\CHa}^r \cdot \fund{\CHa}^s =0$ when $r,s \in \Zz -\{0\}$, hence the group $\fund{\CHa}^{-n}$ is generated by the coefficients $a_{i,n+1-i}$. Now \rref{lemm:FGL_A} implies that $a_{i,n+1-i} = \binom{n+1}{i} \varepsilon t^n$ when $0<i<n+1$, and $a_{i,n+1-i}=0$ otherwise. Thus the statement follows from the computation
\[
\gcd_{0 < i<n+1} \binom{n+1}{i} = 
\begin{cases}
p &\text{ if $n=p^q-1$ for some prime $p$ and integer $q\geq 1$,} \\
1&\text{ otherwise.}
\end{cases}\qedhere
\]
\end{proof}

\subsection{Additive Chern number and fixed locus}

\begin{lemma}
\label{lemm:X_PN_A}
Let $X$ be a smooth projective $k$-scheme of pure dimension $n$ with a $\mud$-action. Let $N$ be the normal bundle to the immersion $X^\mud \to X$.
\begin{enumerate}[(i)]
\item \label{lemm:X_PN_A:1} We have $c_{(n)}(X) = c_{(n)}(\Pp(N\oplus 1)) \in \Fd$, and if $0 < j \leq n$
\[
\deg (c_1(\Oc_{\Pp(N \oplus 1)}(1))^jc_{(n-j)}(\Tan_{\Pp(N\oplus 1)})) =0 \in \Fd.
\]

\item \label{lemm:X_PN_A:2} Let $0< j\leq n$ and assume that $n+1$ and $n-j+1$ are powers of two. Then
\[
c_{(n)}(X) = c_{(n)}(\Pp(N \oplus 1)) + \deg (c_1(\Oc_{\Pp(N \oplus 1)}(1))^j c_{(n-j)}(\Tan_{\Pp(N\oplus 1)})) \in \Zz/4.
\]
\end{enumerate}
\end{lemma}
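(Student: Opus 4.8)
The plan is to evaluate the two families of identities of \rref{th:Lmod2} in the theory $\CHa$, and to upgrade the resulting modulo-two information to congruences modulo four using \rref{lemm:Acn}, which forces $\fund{\CHa}^{-m} = 2\varepsilon t^m \Zz$ whenever $m+1$ is a power of two. Set $\Pp = \Pp(N\oplus 1)$, $h = c_1^{\CH}(\Oc_\Pp(1)) \in \CH(\Pp)$ and $\zeta = c_1^{\CHa}(\Oc_\Pp(-1)) \in \CHa(\Pp)$, so that $\varepsilon\zeta^k = (-1)^k\varepsilon h^k$ by \rref{p:c1_CHa}. Applying the structure morphism $\Laz = \fund{\CHt} \to \fund{\CHa}$ to \rref{th:Lmod2}, and using that base change is compatible with $c_1$, with pushforward, and with the division convention of \rref{p:frac}, turns those identities, which hold in $\Laz/2$, into the statements
\[
\Big\lc \frac{2\zeta}{[2]_{\CHa}(\zeta)} \Big\rc - \lc X \rc \in 2\fund{\CHa}, \qquad \Big\lc \frac{2\zeta^{m+1}}{[2]_{\CHa}(\zeta)} \Big\rc \in 2\fund{\CHa} \quad (m \geq 1).
\]

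The key computation is explicit. From \rref{lemm:FGL_A} one gets $[2]_{\CHa}(\zeta) = 2\zeta\big(1 + \varepsilon\sum_{i\geq 1}(2^i-1)\zeta^i t^i\big)$, hence, since $\varepsilon^2 = 0$,
\[
\frac{2\zeta^{m+1}}{[2]_{\CHa}(\zeta)} = \zeta^m - \varepsilon\sum_{i\geq 1}(2^i-1)\zeta^{m+i}t^i \in \CHa(\Pp).
\]
Combining \rref{p:c1_CHa}, the pushforward formula \eqref{eq:pf_CHa}, and the vanishing of $\deg(h^k)$ for $k \neq n$, I would then compute, for $0 \leq k < n$,
\[
\lc \zeta^k \rc = \varepsilon t^{n-k}\Big( (-1)^n k\,\deg(h^n) - (-1)^k \deg\big(h^k c_{(n-k)}(\Tan_\Pp)\big)\Big),
\]
while $\lc 1\rc = \lc \Pp \rc = c_{(n)}(\Pp)\varepsilon t^n$ by \rref{eq:A_posdim}. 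Feeding these into the displayed series (only the $\zeta^n$-terms survive multiplication by the outer $\varepsilon$) gives closed forms for both brackets above, each living in a single graded component.

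Now $n = 2^q - 1$ is odd, and reading the $\varepsilon t^n$-coefficient of the first relation modulo $2\fund{\CHa}$ — whose degree $-n$ part is $4\varepsilon t^n\Zz$ — yields $c_{(n)}(X) \equiv c_{(n)}(\Pp) + (2^n-1)\deg(h^n) \pmod 4$. For $0 < j < n$, where $j$ is then even (since $n-j = 2^{q'}-1$ is odd), the same reasoning applied to the vanishing relation with $m = j$, using $\fund{\CHa}^{-(n-j)} = 2\varepsilon t^{n-j}\Zz$, gives $\deg\big(h^j c_{(n-j)}(\Tan_\Pp)\big) \equiv (2^{n-j}-1-j)\deg(h^n) \pmod 4$. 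Subtracting, the assertion reduces to the purely arithmetic claim $\big(2^n - 2^{n-j} + j\big)\deg(h^n) \equiv 0 \pmod 4$, which I verify by a short analysis of the $2$-adic valuations of $n = 2^q-1$ and $n-j = 2^{q'}-1$ (the two cases being $q'=1$ and $q'\geq 2$). The boundary case $j = n$ is handled separately: there the degree term is $\deg(h^n)$, and part~\eqref{lemm:X_PN_A:1} shows $\deg(h^n)$ even, so $(2^n-2)\deg(h^n) \equiv 0 \pmod 4$.

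The main obstacle is precisely the passage from modulo-two to modulo-four information: it rests entirely on the divisibility forced into $\fund{\CHa}^{-m}$ by \rref{lemm:Acn} when $m+1$ is a power of two — which is exactly why both $n+1$ and $n-j+1$ must be assumed to be powers of two — together with the final elementary congruence $2^n - 2^{n-j} + j \equiv 0 \pmod 4$. Secondary care is required in tracking graded degrees through the pushforwards $\lc \zeta^k\rc$ and in isolating the single relevant graded component of each bracket.
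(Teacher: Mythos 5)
Your proposal is correct and takes essentially the same route as the paper's proof: both transport the relations of \rref{th:Lmod2} into $\CHa$, compute $\lc \zeta^m \rc$ from \rref{p:c1_CHa} and \eqref{eq:pf_CHa}, expand $2\zeta^{m+1}/[2]_{\CHa}(\zeta)$ via \rref{lemm:FGL_A}, and upgrade the coefficient congruences to modulo four using \rref{lemm:Acn} in exactly the graded components where $n+1$ and $n-j+1$ are powers of two (your arithmetic claim $2^n-2^{n-j}+j \equiv 0 \bmod 4$ is the same fact the paper extracts by observing that $j+1-2^{n-j}$ is odd and that $\deg(h^n)$ is even). The one loose end is that you invoke part \dref{lemm:X_PN_A}{lemm:X_PN_A:1} (to get $\deg(h^n)$ even in the boundary case $j=n$) without deriving it, but it drops out of your own computation: reading the same relations modulo $2\fund{\CHa}$ for \emph{arbitrary} $n$, using only $2\fund{\CHa}^{-s} \subseteq 2\varepsilon t^s \Zz$ together with the $m=n$ relation (which gives $\deg(h^n) \in 2\Zz$), yields both assertions of (i) in $\Fd$.
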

\begin{proof}
Let $z = c_1^{\CH}(\Oc_{\Pp(N \oplus 1)}(-1))$. Since $z=-c_1^{\CH}(\Oc_{\Pp(N \oplus 1)}(1))$ by \rref{p:additive}, we may replace $\Oc_{\Pp(N \oplus 1)}(-1)$ by $\Oc_{\Pp(N \oplus 1)}(1)$ in \eqref{lemm:X_PN_A:1}, and thus also in \eqref{lemm:X_PN_A:2}. Let $\zeta = c_1^{\CHa}(\Oc_{\Pp(N \oplus 1)}(-1))$. Let $0 \leq m \leq n$. Then by \rref{p:c1_CHa} and \eqref{eq:pf_CHa} we have in $\fund{\CHa}^{m-n}$
\[
\lc \zeta^m \rc = \deg \Big( \big(z + \varepsilon \sum_{i \geq 1} t^i z^{i+1}\big)^m \big(1 - \varepsilon \sum_{i \geq 1} t^i c_{(i)}(\Tan_{\Pp(N\oplus 1)})\big)\Big)
\]
hence in $\fund{\CHa}^{m-n}$
\begin{equation}
\label{eq:zetaj_CHa}
\lc \zeta^m \rc = 
\begin{cases}
\deg (z^n)  &\text{ if $m=n$,}\\
\varepsilon t^{n-m}\big(m\deg (z^n) - \deg (z^mc_{(n-m)}(\Tan_{\Pp(N\oplus 1)}))\big) &\text{ if $m<n$.}
\end{cases}
\end{equation}
Now \rref{lemm:FGL_A} implies that we have in $\fund{\CHa}^{m-n}$ (note that $\varepsilon \lc \zeta^i \rc=0$ when $i\neq n$)
\[
\Big\lc \frac{2\zeta^{m+1}}{[2]_{\CHa}(\zeta)} \Big\rc  = \Big\lc \zeta^m\Big(1-\varepsilon\sum_{i\geq 1} t^i(2^i-1)\zeta^i\Big)\Big\rc = \lc \zeta^m \rc + \varepsilon t^{n-m}(1-2^{n-m})\lc \zeta^n \rc.
\]
We combine this equation with \eqref{eq:zetaj_CHa}, and apply \rref{th:Lmod2}. In case $m=n=0$, we obtain $\lc X \rc = \deg(z^0)$ in $\fund{\CHa}^0/2 = \Zz/2$, proving the lemma when $n=0$. Thus we assume that $n>0$ from now on. In case $m=n$, we obtain $\deg(z^n) \in 2\fund{\CHa}^0 = 2\Zz$, while for $m<n$ we obtain in $\fund{\CHa}^{m-n}/2$
\[
\varepsilon t^{n-m} \big((m+1- 2^{n-m}) \deg (z^n)-\deg (z^mc_{(n-m)}(\Tan_{\Pp(N\oplus 1)}))\big) =
\begin{cases}
\lc X \rc &\text{ if $m=0$,} \\
0 &\text{ if $0<m<n$.}
\end{cases}
\]
Taking \rref{lemm:Acn} into account, we apply the group morphism $\fund{\CHa} \to \Zz$ defined by $1\mapsto 1$ and $\varepsilon t^s \mapsto 1$ if $s>0$. Then \eqref{lemm:X_PN_A:1} follows by letting $m=0$ and $m=j$. We now prove \eqref{lemm:X_PN_A:2}. We have, letting $m=0$,
\begin{equation}
\label{eq:cn_zn}
c_{(n)}(X) = \deg (z^n) + c_{(n)}(\Pp(N\oplus 1)) \in \Zz/4.
\end{equation}
This proves \eqref{lemm:X_PN_A:2} in case $j=n$. Finally, assuming that $j<n$,  observe that the integer $(j + 1- 2^{n-j})$ is odd, hence letting $m=j$,
\[
\deg (z^n)=\deg \big(z^jc_{(n-j)}(\Tan_{\Pp(N\oplus 1)})\big)\in \Zz/4.
\]
Combining this equation with \eqref{eq:cn_zn} yields \eqref{lemm:X_PN_A:2} in case $j<n$.
\end{proof}

\begin{theorem}
\label{th:additive_Chern}
Let $X$ be a smooth projective $k$-scheme of pure dimension $n$ with a $\mud$-action. Assume that $2 \dim X^\mud < n-1$. Then $c_{(n)}(X)$ is even. If $n=2^q-1$ for some $q \geq 1$, then $c_{(n)}(X)$ is divisible by four.
\end{theorem}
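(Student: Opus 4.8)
The plan is to decompose the fixed locus as $X^\mud = \bigsqcup_r F^r$, where $F^r$ is the union of the components of codimension $r$ (so $N^r := N|_{F^r}$ has rank $r$ and $\dim F^r = n-r$), and to feed this decomposition into \rref{lemm:X_PN_A}. Writing $d = \dim X^\mud$, the hypothesis $2d < n-1$ translates into $r \geq n-d \geq d+2$ for every nonempty $F^r$; set $r_{\min} = \min\{r : F^r \neq \varnothing\} \geq d+2$. All the quantities appearing in \rref{lemm:X_PN_A} will be expressed through the characteristic numbers $D_r(a) := \deg_{F^r}\!\big(c_{(a)}(N^r)\,c_{e_r-a}(-N^r)\big)$ of $N^r$ on $F^r$ (here $e_r = n-r$, and $c_{(0)}=1$), assembled into $S_0 = \sum_r (r+1)D_r(0)$ and $S_a = \sum_{r:\,e_r\geq a} D_r(a)$ for $a\geq 1$.

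First I would carry out the projective-bundle computation on $\Pp(N^r\oplus 1)\xrightarrow{p} F^r$. Using the splitting principle \rref{p:splitting}, the relative tangent sequence $0\to\Oc\to (p^*(N^r\oplus 1))(1)\to \Tan_p\to 0$, and the pushforward formula $p_*\big(c_1(\Oc(1))^k\big)=c_{k-r}(-(N^r\oplus 1))$ used in \rref{prop:Quillen}, one finds (the $\Oc(-1)$-summand contributes a vanishing $n$-th power, and $p^*\Tan_{F^r}$ drops out for dimension reasons) that
\[
c_{(n)}(\Pp(N^r\oplus 1)) = -\Big((r+1)D_r(0) + \sum_{a=1}^{e_r}\binom{n}{a}D_r(a)\Big),
\]
while, for $0 < j < r$ (so that the base-tangent term is killed by $p_*\big(c_1(\Oc(1))^j\big)=0$),
\[
\deg\big(c_1(\Oc_{\Pp(N^r\oplus 1)}(1))^j\, c_{(n-j)}(\Tan_{\Pp(N^r\oplus 1)})\big) = (r+1)D_r(0) + \sum_{a=1}^{e_r}\binom{n-j}{a}D_r(a).
\]
Summing over $r$ and abbreviating the second expression by $C_j$, this becomes $C_j = \sum_{a=0}^{d}\binom{n-j}{a}S_a$, and $c_{(n)}(\Pp(N\oplus 1)) = -\sum_{a=0}^{d}\binom{n}{a}S_a$ (the $a=0$ term being read off through $S_0$).

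The key step is the parity claim that \emph{every $S_a$ is even}. By \dref{lemm:X_PN_A}{lemm:X_PN_A:1}, $C_j = 0$ in $\Fd$ for all $0<j\leq n$; since $d+1\leq r_{\min}-1$, the values $j=1,\dots,d+1$ all satisfy $j<r$ for every component, so I obtain $d+1$ relations $\sum_{a=0}^{d}\binom{n-j}{a}S_a\equiv 0 \pmod 2$. The coefficient matrix $\big[\binom{n-j}{a}\big]_{1\leq j\leq d+1,\,0\leq a\leq d}$ evaluates binomials at the $d+1$ consecutive integers $n-1,\dots,n-d-1$, so its determinant is $\big(\prod_{0\le i<i'\le d}(x_{i'}-x_i)\big)\big/\prod_{a=0}^{d}a! = \pm 1$; being a unit it is invertible over $\Fd$, forcing $S_a\equiv 0\pmod 2$ for all $0\leq a\leq d$. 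This is exactly where the hypothesis $2\dim X^\mud < n-1$ (i.e.\ $r_{\min}\geq d+2$) enters. The first assertion follows at once: by \dref{lemm:X_PN_A}{lemm:X_PN_A:1}, $c_{(n)}(X)=c_{(n)}(\Pp(N\oplus 1))=-\sum_a\binom{n}{a}S_a\equiv 0$ in $\Fd$.

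For the refinement when $n=2^q-1$, I would apply \dref{lemm:X_PN_A}{lemm:X_PN_A:2} with $j=2^{q-1}$: then $n+1=2^q$ and $n-j+1=2^{q-1}$ are powers of two, and $j<r_{\min}$ because $r_{\min}\geq 2^{q-1}+1$. Subtracting the two displayed formulas, the $(r+1)D_r(0)$ contributions cancel and one obtains in $\Zz/4$
\[
c_{(n)}(X) \equiv \sum_{a=1}^{d}\Big(\binom{2^{q-1}-1}{a} - \binom{2^{q}-1}{a}\Big)S_a .
\]
Since $2^{q}-1$ and $2^{q-1}-1$ are strings of ones in base two and $a\leq d\leq 2^{q-1}-2$, Lucas' theorem makes both binomials odd, so every coefficient is even; together with $S_a\equiv 0\pmod 2$ each summand is divisible by $4$, giving $c_{(n)}(X)\equiv 0\pmod 4$. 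The main obstacle is precisely this parity step: seeing that the relations of \dref{lemm:X_PN_A}{lemm:X_PN_A:1} assemble, through the unit-determinant (superfactorial) computation, into the individual vanishing of each $S_a$ modulo two — this is simultaneously what activates the dimension hypothesis and, in combination with the Lucas-type oddness, what upgrades the congruence from modulo two to modulo four.
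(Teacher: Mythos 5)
Your proposal is correct, and I verified the two pushforward formulas at its core: with $\Tan_{\Pp(N^r\oplus 1)}=(p^*(N^r\oplus 1))(1)-1+p^*\Tan_{F^r}$, $p_*(\xi^m)=c_{m-r}(-N^r)$ and the power sums $\sum_i y_i^a = c_{(a)}(N^r)$, one gets exactly your $(r+1)D_r(0)+\sum_a\binom{n-j}{a}D_r(a)$, the base-tangent term dies for $j<r$, and the ranges $j\leq d+1<r_{\min}$, resp.\ $j=2^{q-1}<r_{\min}$, are justified by the hypothesis just as you say; the Vandermonde computation $\prod_{i<i'}(x_{i'}-x_i)/\prod_a a!=\pm 1$ at consecutive integers is also right, so the system really forces $S_a\equiv 0 \bmod 2$. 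However, your combinatorial engine is genuinely different from the paper's. The paper never pushes down to $X^\mud$ and never solves a linear system: writing $n=2k$ or $n=2k+1$ and $l=n-k$, it uses the polynomial identity $(x+y)^kx^l=(x+y)^n+\sum_{i=1}^l(-1)^i\binom{l}{i}y^i(x+y)^{n-i}$ together with the vanishing (forced by $2\dim X^\mud<n-1$) of $c_{(l+j)}(N\oplus 1)$ and of $c_{(n-i)}(\Tan_{X^\mud})$ for $i\leq l$, to prove the relation $-c_{(n)}(\Tan_{\Pp(N\oplus 1)})=\sum_{i=1}^l(-1)^i\binom{l}{i}\xi^i\,c_{(n-i)}(\Tan_{\Pp(N\oplus 1)})$ directly in $\CH(\Pp(N\oplus 1))$; evenness of each degree on the right is then literally \dref{lemm:X_PN_A}{lemm:X_PN_A:1}, and for $n=2^q-1$ the mod-$4$ values of $\binom{2^{q-1}}{i}$ (congruent to $1,2,1$ at $i=0,2^{q-2},2^{q-1}$ and $0$ otherwise) combine with \dref{lemm:X_PN_A}{lemm:X_PN_A:2}, applied at the same value $j=2^{q-1}$ you chose, to finish. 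So the paper only needs evenness of the global degrees $C_j$, which \dref{lemm:X_PN_A}{lemm:X_PN_A:1} gives for free, whereas you extract the strictly finer statement that each assembled fixed-locus characteristic number $S_a$ is individually even, at the price of the unimodular-matrix argument, and then substitute Lucas-type oddness of $\binom{2^q-1}{a}$ and $\binom{2^{q-1}-1}{a}$ for the paper's mod-$4$ expansion of $\binom{2^{q-1}}{i}$. Each approach has its merits: the paper's identity is shorter and avoids auxiliary invariants; yours produces exact integral formulas and the componentwise congruences $S_a\equiv 0 \bmod 2$, which are potentially reusable. One cosmetic remark: the paper treats $n\in\{0,1\}$ separately, while in your setup these cases (where $X^\mud=\varnothing$) are covered vacuously since all $S_a$ vanish and \rref{lemm:X_PN_A} applies to the empty projective bundle — worth a sentence, but not a gap.
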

\begin{proof}
If $n\in \{0,1\}$, then $X^{\mud} = \varnothing$, hence $\Pp(N\oplus 1) = \varnothing$. Thus the theorem follows from  \dref{lemm:X_PN_A}{lemm:X_PN_A:1} when $n=0$, and from \dref{lemm:X_PN_A}{lemm:X_PN_A:2} with $j=1$ when $n=1$.

We now assume that $n\geq 2$. Let $k$ be the integer such that $n=2k$ or $n=2k+1$, and set $l=n-k$. Let $p\colon \Pp(N\oplus 1) \to X^\mud$ be the projective bundle, and write $\xi = c_1(\Oc(1)) \in \CH(\Pp(N \oplus 1))$. Since $\Tan_{\Pp(N\oplus 1)}=(p^*N\oplus 1)(1) + p^*\Tan_{X^\mud} -1 \in K_0(\Pp(N\oplus 1))$ (see e.g.\ \cite[\S B.5.8]{Ful-In-98}), for any $i=0,\cdots,l$ we have $c_{(n-i)}(\Tan_{\Pp(N\oplus 1)}) = c_{(n-i)}((p^*N\oplus 1)(1))+ p^*c_{(n-i)}(\Tan_{X^\mud})$ in $\CH(\Pp(N\oplus 1))$ (note that $n-i>0$). For such $i$, the element $c_{(n-i)}(\Tan_{X^\mud}) \in \CH^{n-i}(X^\mud)$ vanishes, since $\dim X^\mud < k \leq n-i$. Thus for $i=0,\cdots,l$ we have in $\CH(\Pp(N\oplus 1))$,
\begin{equation}
\label{eq:TP_N1}
c_{(n-i)}(\Tan_{\Pp(N\oplus 1)}) = c_{(n-i)}((p^*N\oplus 1)(1)).
\end{equation}

Now, we have in $\Zz[x,y]$
\[
(x+y)^kx^l = (x+y)^k((x+y) -y)^l = (x+y)^n + \sum_{i=1}^l (-1)^i\binom{l}{i} y^i(x+y)^{n-i}. 
\]
Expanding the factor $(x+y)^k$ on the left hand side yields in $\Zz[x,y]$
\[
(x+y)^n = \sum_{j=0}^k \binom{k}{j} y^{k-j}x^{l+j}  - \sum_{i=1}^l (-1)^i\binom{l}{i} y^i(x+y)^{n-i}.
\]
From the splitting principle \rref{p:splitting}, we deduce, in $\CH(\Pp(N\oplus 1))$
\[
c_{(n)}((p^*N\oplus 1)(1)) = \sum_{j=0}^k \binom{k}{j} \xi^{k-j} p^*c_{(l+j)}(N\oplus 1) - \sum_{i=1}^l (-1)^i\binom{l}{i} \xi^ic_{(n-i)}((p^*N\oplus 1)(1)).
\]
Now $c_{(l+j)}(N\oplus 1) \in \CH^{l+j}(X^\mud)$ vanishes for any $j\geq 0$, because $\dim X^\mud < k \leq l +j$. In view of \eqref{eq:TP_N1}, we obtain in $\CH(\Pp(N\oplus 1))$
\begin{equation}
\label{eq:cn_TPN}
-c_{(n)}(\Tan_{\Pp(N\oplus 1)}) = \sum_{i=1}^l (-1)^i\binom{l}{i} \xi^ic_{(n-i)}(\Tan_{\Pp(N\oplus 1)}).
\end{equation}
Applying \dref{lemm:X_PN_A}{lemm:X_PN_A:1} and taking the degree of \eqref{eq:cn_TPN} yields $c_{(n)}(X)=0\in\Fd$. Now assume that $n=2^q-1$ with $q\geq 2$. Then $l=2^{q-1}$, and we have
\[
\binom{l}{i} \mod 4= 
\begin{cases}
1 &\text{ if $i=0$ or $i=l$,} \\
2 &\text{ if $i=2^{q-2}$,}\\
0 &\text{ otherwise.}
\end{cases}
\]
Thus, taking the degree of \eqref{eq:cn_TPN} yields
\[
c_{(n)}(\Pp(N\oplus 1)) = \deg(\xi^l c_{(n-l)}(\Tan_{\Pp(N\oplus 1)}))
+ 2 \deg (\xi^{2^{q-2}} c_{(n-2^{q-2})}(\Tan_{\Pp(N\oplus 1)})) \in  \Zz/4.
\]
Applying \dref{lemm:X_PN_A}{lemm:X_PN_A:2} with $j=l$, we get
\[
c_{(n)}(X) = 2\deg(\xi^l c_{(n-l)}(\Tan_{\Pp(N\oplus 1)})) + 2 \deg (\xi^{2^{q-2}} c_{(n-2^{q-2})}(\Tan_{\Pp(N\oplus 1)})) \in \Zz/4,
\]
which vanishes by \dref{lemm:X_PN_A}{lemm:X_PN_A:1} applied with $j=l$ and $j=2^{q-2}$.
\end{proof}

\subsection{Decomposability in the Lazard ring}
\begin{para}
Let $R$ be a $\Zz$-graded ring. Denote by $I_R$ the ideal generated by homogeneous elements of nonzero degrees in $R$, and set $\Dec{R} = (I_R)^2$. An element of $R$ is called \emph{decomposable} if it belongs to $\Dec{R}$, and \emph{indecomposable} otherwise.
\end{para}

Decomposability in $\Laz/p$ or $\Lp$ is detected using the additive Chern number:
\begin{lemma}
\label{lemm:indec_Ld_L2}
Let $X$ be a smooth projective $k$-scheme of pure dimension $n$, and $p$ a prime number.
\begin{enumerate}[(i)]
\item \label{lemm:indec_Ld_L2:1} If $n=p^q -1$ for some $q \geq 1$, then $\lc X\rc$ is decomposable in $\Lp$.

\item \label{lemm:indec_Ld_L2:2} Assume that $n \neq p^q-1$ for all $q \geq 1$. Then $\lc X\rc$ is decomposable in $\Laz/p$ if and only if $\lc X\rc$ is decomposable in $\Lp$.

\item \label{lemm:indec_Ld_L2:3} The class $\lc X\rc$ is decomposable in $\Laz/p$ if and only if
\[
c_{(n)}(X) \in 
\begin{cases}
p^2\Zz &\text{ if $n =p^q-1$ for some $q \geq 1$,} \\
p\Zz &\text{ otherwise.}
\end{cases}
\]
\end{enumerate}
\end{lemma}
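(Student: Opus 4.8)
The plan is to reduce the whole lemma to two pieces of classical structure theory together with the behaviour of the additive Chern number $c_{(n)}$ on decomposables. Recall first that $\Laz$ is the complex cobordism ring (see the remark following \rref{prop:Lp_univ}), hence a polynomial ring $\Zz[x_1,x_2,\dots]$ with $x_i$ in degree $-i$ (Milnor--Novikov, see e.g.\ \cite{Adams-stable}); in particular $\Laz^{-n}/\Dec{\Laz}^{-n}\cong\Zz$ for every $n\geq 1$. Likewise $\Lp$ is polynomial over $\Fp$ with exactly one generator in each degree $-n$ ($n\geq 1$), \emph{except} when $n=p^q-1$, where there is none (for $p=2$ this is Thom's theorem; in general \cite[Theorem~7.8]{Quillen-Elementary}). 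Thus $\Lp^{-n}/\Dec{\Lp}^{-n}$ is $\Fp$ when $n\neq p^q-1$ and vanishes when $n=p^q-1$. I would also record that the reduction $\rho\colon\Laz\to\Laz/p$ is a surjection of graded rings, so $\Dec{\Laz/p}=\rho(\Dec{\Laz})$, whence $(\Laz/p)^{-n}/\Dec{\Laz/p}^{-n}\cong(\Laz^{-n}/\Dec{\Laz}^{-n})\otimes\Fp\cong\Fp$, generated by the image $\bar x_n$ of $x_n$.

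The second ingredient is that $c_{(n)}$ kills decomposables. Since $c_{(n)}$ is additive on $K_0$ for $n>0$, for positive-dimensional $Y,Z$ with $\dim Y+\dim Z=n$ one has $c_{(n)}(-\Tan_{Y\times Z})=\mathrm{pr}_Y^*c_{(n)}(-\Tan_Y)+\mathrm{pr}_Z^*c_{(n)}(-\Tan_Z)$, and both terms vanish because $c_{(n)}(-\Tan_Y)\in\CH^n(Y)=0$ (as $n>\dim Y$) and similarly for $Z$; hence $c_{(n)}(Y\times Z)=0$. Therefore $c_{(n)}\colon\Laz^{-n}\to\Zz$ vanishes on $\Dec{\Laz}^{-n}$ and factors through $\Laz^{-n}/\Dec{\Laz}^{-n}\cong\Zz$. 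As $\Laz^{-n}$ is generated by the classes of smooth projective $k$-schemes (\rref{p:Hc}), its image is $\gcd\{c_{(n)}(X)\}\cdot\Zz$, which by Milnor's computation of the $s_n$-number (see e.g.\ \cite{Adams-stable}) equals $d_n\Zz$, where $d_n=p'$ if $n=(p')^q-1$ for a prime $p'$ and $d_n=1$ otherwise. Choosing $x_n$ to lift a generator of the indecomposables gives $\Laz^{-n}=\Zz x_n\oplus\Dec{\Laz}^{-n}$ with $c_{(n)}(x_n)=\pm d_n$.

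With these in hand, (i) is immediate: when $n=p^q-1$ the indecomposables of $\Lp$ vanish in degree $-n$, so every class there is decomposable. For (ii), assume $n\neq p^q-1$ and consider the surjection $\phi\colon\Laz/p\to\Lp$, which carries $\lc X\rc$ to its class in $\Lp$ and $\Dec{\Laz/p}$ into $\Dec{\Lp}$; thus decomposability in $\Laz/p$ forces it in $\Lp$. Conversely $\phi$ induces a surjection $\Fp\cong(\Laz/p)^{-n}/\Dec{\Laz/p}^{-n}\to\Lp^{-n}/\Dec{\Lp}^{-n}\cong\Fp$; were it zero, $\phi$ would send all of $(\Laz/p)^{-n}$ into $\Dec{\Lp}$, contradicting surjectivity onto a nonzero target. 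Hence $\phi$ is an isomorphism on indecomposables in degree $-n$, and the two notions of decomposability coincide.

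Finally (iii): write $\lc X\rc=m\,x_n+D$ with $m\in\Zz$ and $D\in\Dec{\Laz}^{-n}$; then $c_{(n)}(X)=\pm m\,d_n$, while the indecomposable part of $\lc X\rc$ in $\Laz/p$ is $\bar m\,\bar x_n$, so $\lc X\rc$ is decomposable in $\Laz/p$ iff $p\mid m$. If $n\neq p^q-1$ then $d_n=1$ and $p\mid m\iff c_{(n)}(X)\in p\Zz$; if $n=p^q-1$ then $d_n=p$ and $p\mid m\iff p^2\mid m d_n\iff c_{(n)}(X)\in p^2\Zz$, which is the asserted dichotomy. The only non-elementary inputs are the two structure theorems and Milnor's divisibility result; granting these, the argument is bookkeeping with indecomposables, and the main point to get right is that $c_{(n)}$ detects the generator \emph{integrally}, so that the refined $p^2$-condition in the prime-power case is visible only before reducing modulo $p$ (where $c_{(n)}$ itself becomes identically zero on indecomposables).
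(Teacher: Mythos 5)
Your proposal is correct in substance, and for part (iii) it follows essentially the paper's route: both arguments reduce to the facts that $\Laz^{-n}/\Dec{\Laz}^{-n}\cong\Zz$ and that $c_{(n)}$ kills decomposables and sends a generator of the indecomposables to $\pm d_n$. In the paper this is packaged via Adams' description of $\Laz\subset\Zz[\bb]$ as a polynomial ring on generators $y_i$ with $y_i=v_ib_i \bmod \Dec{\Zz[\bb]}$; your $d_n$ is the paper's $v_n$, and $c_{(n)}(X)$ is the $b_n$-coefficient of $\lc X\rc$ by \rref{p:Chern_Laz}. One local but genuine slip in (iii): by your own definition of $d_n$, the claim ``if $n\neq p^q-1$ then $d_n=1$'' is false whenever $n=(p')^{q'}-1$ for a prime $p'\neq p$ (e.g.\ $p=3$, $n=1$: here $\Laz^{-1}=\Zz\lc\Pp^1\rc$ with $\lc\Pp^1\rc=-2b_1$, so $d_1=2$). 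What your argument needs, and what is true since $p^q=(p')^{q'}$ is impossible for distinct primes, is only that $d_n$ is \emph{prime to} $p$ when $n\neq p^q-1$, so that $p\mid m\iff p\mid md_n$; this is exactly the point the paper makes (``Since $v_n$ is prime to $p$\dots''). With that one-line correction, your bookkeeping in (iii) is sound.

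For (i) and (ii) you take a genuinely different and strictly heavier route: you invoke the structure theorem for $\Lp$ itself (a polynomial $\Fp$-algebra with exactly one generator in each degree $-n$ with $n\neq p^q-1$), so that $\Lp^{-n}/\Dec{\Lp}^{-n}$ is $\Fp$ or $0$ according to the dichotomy, and then your observation that a surjection of one-dimensional $\Fp$-vector spaces of indecomposables is an isomorphism settles (ii) slickly. For $p=2$ this input is Thom's theorem, legitimate given the identification $\Ld\cong MO_*$ recalled in the paper. For odd $p$ the statement is true but is substantially more than the paper uses: the vanishing of the degree $-(p^q-1)$ indecomposables of $\Lp$ amounts to the nontrivial fact that the coefficient of $x^{p^q}$ in $[p]_{\CHt}(x)$ is indecomposable modulo $p$ (existence of ``$v_q$-generators''), and your citation should be double-checked, since Theorems 6.5 and 7.8 of \cite{Quillen-Elementary} are quoted in the paper for identifying $\Laz$ and $\Ld$ with the complex and unoriented cobordism rings, not for an odd-primary structure theorem for $\Lp$. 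The paper's proof deliberately avoids all structure theory of $\Lp$: it uses only that $\Lp$ is the image of $\Laz$ in $\Fp[\bb]$, whence $\Dec{\Lp}\subset\Dec{\Fp[\bb]}$, and reads off $b_n$-coefficients in $\Fp[\bb]$, so that decomposability in $\Lp$ directly forces $uv_n\equiv 0\bmod p$. In exchange, your approach makes the degree pattern of $\Lp$ explicit and gives a more conceptual converse in (ii); the paper's coefficientwise argument buys the same conclusions from the single structure theorem for $\Laz$ (Adams II, \S 7), in keeping with its stated aim of elementariness.
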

\begin{proof}
The degree zero components of $\Laz/p$ and $\Lp$ may be identified with $\Fp$, via the map $\lc X \rc \mapsto c_{(0)}(X)$. This implies the statements when $n=0$.

We now assume that $n>0$. In view of \rref{th:Lazard} and \rref{p:FGL_exp}, it follows from  \cite[II, \S7]{Adams-stable} that the subring $\Laz$ of $\Zz[\bb]$ is a polynomial ring in the variables $y_i$ for $i \in \Nn-\{0\}$, where $y_i$ is homogeneous of degree $-i$. In addition $y_i = v_i b_i \mod \Dec{\Zz[\bb]}$ with $v_i = l$ if $i = l^q -1$ for some prime $l$ and integer $q \geq 1$, and $v_i=1$ otherwise. Since the ring $\Laz$ is generated by the elements $y_i$, among which only $y_n$ has degree $-n$, we have $\lc X\rc = u y_n \mod \Dec{\Laz}$, for some $u \in \Zz$. Then $\lc X\rc$ is decomposable in $\Laz/p$ if and only if $u \in p\Zz$. Since  $\lc X \rc = uv_n b_n \mod \Dec{\Zz[\bb]}$, its $b_n$-coefficient $c_{(n)}(X)$ equals $uv_n$,  and \eqref{lemm:indec_Ld_L2:3} follows.

Assume that $n\neq p^q-1$ for all $q \geq 0$. Obviously if $\lc X \rc$ is decomposable in $\Laz/p$, then $\lc X \rc$ is decomposable in $\Laz_p$. Conversely assume that $\lc X\rc$ is decomposable in $\Laz_p \subset \Fp[\bb]$. Since $\lc X \rc = uv_n b_n \mod \Dec{\Fp[\bb]}$, it follows that $uv_n \in p\Zz$. Since $v_n$ is prime to $p$, this implies that $u \in p\Zz$, proving \eqref{lemm:indec_Ld_L2:2}.

Assume that $n=p^q-1$ for some $q \geq 1$. Since $v_n \in \Fp$ vanishes, so does the element $y_n = v_n b_n \in \Laz_p \subset \Fp[\bb]$. Since $\lc X\rc = u y_n \mod \Dec{\Laz_p}$, it follows that $\lc X\rc \in \Dec{\Laz_p}$, proving \eqref{lemm:indec_Ld_L2:1}.
\end{proof}

Besides the additive Chern number, other Chern numbers are affected by the decomposability in $\Laz/p$:

\begin{lemma}
\label{lemm:p-adic}
Let $p$ be a prime number, and $\alpha=(\alpha_1,\cdots,\alpha_m)$ a partition such that each $\alpha_i+1$ is a power of $p$. Let $X$ be a connected smooth projective $k$-scheme of positive dimension. Then $c_\alpha(X) \in p\Zz$. If $\lc X\rc \in \Laz/p$ is decomposable, then $c_\alpha(X) \in p^2\Zz$.
\end{lemma}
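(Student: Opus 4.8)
The plan is to prove the first assertion by induction on $|\alpha|$ and then to deduce the second from it. Throughout, for $z\in\Laz$ I write $c_\alpha(z)\in\Zz$ for the $b_\alpha$-coefficient of $z\in\Zz[\bb]$, so that $c_\alpha(\lc X\rc)=c_\alpha(X)$ and $c_\alpha$ is a $\Zz$-linear form on $\Laz$ vanishing off degree $-|\alpha|$. Call a partition \emph{special} if each of its parts $j$ satisfies that $j+1$ is a power of $p$; note that any sub-multiset of a special partition is again special. Since $\Laz^{-n}$ is spanned by the classes of connected smooth projective $k$-schemes of dimension $n$, the first assertion is equivalent to the statement $c_\alpha(\Laz^{-|\alpha|})\subseteq p\Zz$ for every special $\alpha$.

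I would base everything on the description of $\Laz$ obtained in the proof of \rref{lemm:indec_Ld_L2}: one has $\Laz=\Zz[y_1,y_2,\dots]$ with $y_i$ homogeneous of degree $-i$ and $y_i\equiv v_i b_i \bmod \Dec{\Zz[\bb]}$, where $v_i=p$ precisely when $i$ is special. To prove $c_\alpha(\Laz^{-n})\subseteq p\Zz$ for special $\alpha$ with $n=|\alpha|$, assuming inductively that $c_\mu(\Laz^{-s})\subseteq p\Zz$ for all special $\mu$ with $s=|\mu|<n$, it suffices to check $c_\alpha(y_\lambda)\in p\Zz$ on the monomials $y_\lambda=y_{\lambda_1}\cdots y_{\lambda_r}$ with $|\lambda|=n$. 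Reading off the $b_\alpha$-coefficient of a product in the monomial basis of $\Zz[\bb]$ gives $c_\alpha(y_\lambda)=\sum\prod_{j}c_{\mu^{(j)}}(y_{\lambda_j})$, the sum being over the splittings $\alpha=\mu^{(1)}\sqcup\cdots\sqcup\mu^{(r)}$ into (necessarily special, nonempty) sub-multisets with $|\mu^{(j)}|=\lambda_j$. When $r\geq 2$ every $\lambda_j<n$, so each factor lies in $p\Zz$ by the induction hypothesis; thus only the single monomial $y_n$ remains to be treated.

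It therefore comes down to showing $c_\alpha(y_n)\in p\Zz$. If $n$ is special then $v_n=p$, so $\bar y_n$ is decomposable in $\Lp$ by \rref{lemm:indec_Ld_L2}, and one may write $y_n=d+\kappa$ with $d\in\Dec{\Laz}$ and $\kappa\in\ker(\Laz\to\Lp)=\Laz\cap p\Zz[\bb]$. Here $c_\alpha(\kappa)\in p\Zz$ because every coefficient of $\kappa$ is, while $c_\alpha(d)\in p\Zz$ by expanding $d$ as a combination of products $\lc A\rc\lc B\rc$ of positive-dimensional classes and applying the induction hypothesis to the sub-partitions $\beta,\gamma$ with $\beta\sqcup\gamma=\alpha$ occurring in $c_\alpha(\lc A\rc\lc B\rc)=\sum c_\beta(A)c_\gamma(B)$. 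Granting the first assertion in full, the second follows by the same computation: if $\lc X\rc=d+pw$ is decomposable in $\Laz/p$, then $pc_\alpha(w)\in p^2\Zz$ and $c_\alpha(d)=\sum c_\beta(A)c_\gamma(B)\in (p\Zz)(p\Zz)=p^2\Zz$, whence $c_\alpha(X)\in p^2\Zz$.

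The genuine difficulty is the remaining case of $c_\alpha(y_n)$ with $n$ \emph{not} special. Then $\bar y_n$ is an honest polynomial generator of $\Lp$, the bookkeeping above produces no factor of $p$, and one must prove directly that a special monomial $b_\alpha$ (necessarily with at least two parts) occurs with coefficient divisible by $p$ in an indecomposable generator of non-special degree. This is the point where the structure recorded in \rref{lemm:indec_Ld_L2} no longer suffices: it encodes only the leading ($b_i$) coefficient of each generator, not the special coefficients of its decomposable tail. I would attack it through the Landweber--Novikov operations carried by the twisted theories of \S\ref{sect:cobordism}, equivalently the coaction $\Lp\to\Lp[\bb]$: the functional $c_\alpha$ for special $\alpha$ should factor through the composite of the degree-lowering operations attached to the special parts $p^{q_i}-1$, expressing $c_\alpha(y_n)$ modulo $p$ in terms of augmentations of lower-dimensional classes controlled by the induction hypothesis. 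Verifying that these special operations annihilate the Hurewicz image of an indecomposable generator --- the algebraic shadow of the fact that, dually, special monomials pair trivially with the non-special generators of $\Lp$ --- is the crux, and is where I expect the real work (and the possible need for Milnor's finer analysis of the characteristic numbers of the generators of $\Laz$) to lie.
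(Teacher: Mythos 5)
Your reduction is sound bookkeeping, but the proof is incomplete, and you say so yourself: the case $c_\alpha(y_n)$ with $\alpha$ special and $n=|\alpha|$ \emph{not} of the form $p^q-1$ is left entirely open, and that case is the heart of the lemma. Everything you do establish (the monomials $y_\lambda$ with $r\geq 2$ factors, the generators $y_n$ in special degree via \rref{lemm:indec_Ld_L2}, the deduction of the $p^2$-statement for decomposables) is routine once that case is granted; nothing in your text rules out a special monomial $b_\alpha$ occurring with coefficient prime to $p$ in an indecomposable generator of non-special degree. Your closing paragraph names a plausible strategy but does not carry it out, nor even isolate the precise assertion that would make it work, so as it stands this is a proof sketch with the crux missing.

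The missing idea, which is the paper's entire proof and which moreover makes your induction unnecessary, is the following. Let $\varphi$ be the ring endomorphism of $\Zz[\bb]$ fixing $b_i$ when $i+1$ is a power of $p$ and sending the other $b_i$ to $0$; for special $\alpha$ one has $c_\alpha(z)=c_\alpha(\varphi(z))$ for all $z\in\Zz[\bb]$. Let $\psi\colon\Zz[\bb]\to\Fp[\bb]$ be $\varphi$ followed by reduction modulo $p$, and consider the oriented cohomology theory $\Hh=\CHt\otimes_{\Zz[\bb],\psi}\Fp[\bb]$ given by \rref{prop:tilde_is_oct} and \rref{ex:tensor_oct}. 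By \rref{p:c1_tilde} one computes $c_1^{\Hh}(L)=\sum_{q\in\Nn}c_1^{\CH}(L)^{p^q}b_{p^q-1}$, and since $x\mapsto x^{p^q}$ is additive in characteristic $p$ and $\CH$ is additive \rref{p:additive}, the theory $\Hh$ is additive. Hence every coefficient $a_{i,j}$ of $F_{\CHt}$ with $i+j>1$ lies in $\ker\psi$; since these coefficients generate $\Laz$ by \rref{cor:Lazard} and every homogeneous element of negative degree is a polynomial each of whose monomials contains such a factor, $\varphi$ maps all of $\Laz^{<0}$ into $p\Zz[\bb]$ --- in particular $c_\alpha(y_n)\in p\Zz$ for \emph{every} $n\geq 1$, special or not, with no case distinction and no induction. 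The multiplicativity of $\varphi$ then gives the second assertion directly: writing $\lc X\rc=d+pw$ with $d\in\Dec{\Laz}$ and $w\in\Laz^{-n}$, $n>0$, one gets $\varphi(d)\in p^2\Zz[\bb]$ (a sum of products of two elements of $p\Zz[\bb]$) and $p\varphi(w)\in p^2\Zz[\bb]$. Your instinct about Landweber--Novikov operations was pointing in this direction --- the twist by $\psi$ is precisely a change-of-orientation of that kind --- but the decisive input, that the twisted theory becomes \emph{additive} via the Frobenius so that all positive-dimensional classes die modulo $p$, is the step your proposal lacks, and without it the genuine difficulty you correctly identified remains unresolved.
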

\begin{proof}
Let $\varphi$ be the ring endomorphism of $\Zz[\bb]$ mapping  $b_i$ to itself if $i+1$ is a power of $p$, and to $0$ otherwise. Then every element of $\Zz[\bb]$ has the same $b_{\alpha}$-coefficient as its image under $\varphi$. Let $\psi \colon \Zz[\bb] \to \Fp[\bb]$ be the composite of $\varphi$ with the reduction modulo $p$. The functor $\Hh=\CHt \otimes_{\Zz[\bb],\psi} \Fp[\bb]$ defines an oriented cohomology theory by \rref{prop:tilde_is_oct}, \rref{ex:tensor_oct} and \rref{p:CH_is_oct}. 
If $L$ is a line bundle over $X \in \Sm_k$ then by \rref{p:c1_tilde} 
\[
c_1^{\Hh}(L) = \sum_{i\in \Nn} c_1^{\CH}(L)^{i+1}\psi(b_i) = \sum_{q \in \Nn} c_1^{\CH}(L)^{p^q}b_{p^q-1} \in \Hh(X).
\]
Since $\CH$ is additive \rref{p:additive} and $p\Hh(X)=0$, it follows that $\Hh$ is additive. Therefore when $i+j>1$, the coefficient $a_{i,j} \in \Laz$ of $F_{\CHt}$ (defined in \eqref{eq:FGL}) lies in the kernel of $\psi$. By \rref{cor:Lazard}, this implies that $\varphi|_{\Laz} \colon \Laz \to \Zz[\bb]$ sends homogeneous elements of negative degrees to $p\Zz[\bb]$. Thus $\varphi(\lc X \rc) \in p\Zz[\bb]$, hence the $b_\alpha$-coefficient $c_\alpha(X)$ of $\lc X \rc \in \Zz[\bb]$ belongs to $p\Zz$. It also follows that $\varphi|_{\Laz}$ sends homogeneous elements of negative degrees whose image in $\Laz/p$ is decomposable to $p^2\Zz[\bb]$. Thus if $\lc X\rc \in \Laz/p$ is decomposable, then the $b_\alpha$-coefficient $c_\alpha(X)$ belongs to $p^2\Zz$.
\end{proof}

Finally \dref{lemm:indec_Ld_L2}{lemm:indec_Ld_L2:3} implies the following reformulation of \rref{th:additive_Chern}.
\begin{theorem}
\label{th:indec}
Let $X$ be a connected smooth projective $k$-scheme with a $\mud$-action. If $2 \dim X^\mud < \dim X-1$, then $\lc X\rc$ is decomposable in $\Laz/2$.
\end{theorem}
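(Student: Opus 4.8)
The plan is to recognise this statement as an immediate consequence of the additive Chern number computation in \rref{th:additive_Chern}, translated into decomposability through the criterion \dref{lemm:indec_Ld_L2}{lemm:indec_Ld_L2:3}. The author essentially announces this in the sentence preceding the theorem, so the work will consist in matching hypotheses and conclusions rather than in any new geometric input.

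First I would reduce to the pure-dimensional situation. Since $X$ is smooth over $k$ it is regular, hence locally integral, so its connected components are irreducible; being connected, $X$ is therefore integral of pure dimension $n = \dim X$. The hypothesis $2 \dim X^\mud < \dim X - 1$ is then literally the hypothesis $2 \dim X^\mud < n-1$ appearing in \rref{th:additive_Chern}. Next I would invoke \rref{th:additive_Chern}, which yields that $c_{(n)}(X)$ is even, and moreover divisible by four in the exceptional case $n = 2^q-1$ for some $q \geq 1$. This is precisely the divisibility condition characterising decomposability of $\lc X \rc$ in $\Laz/2$ according to \dref{lemm:indec_Ld_L2}{lemm:indec_Ld_L2:3} applied with $p=2$: the two cases $n = 2^q-1$ and its complement correspond exactly to the required membership of $c_{(n)}(X)$ in $4\Zz$ and in $2\Zz$ respectively. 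Combining the two statements gives the conclusion.

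Since all the substantial content is carried by the prior results, there is no genuine obstacle here; the argument is a one-line deduction once the bookkeeping is in place. The only points demanding a little care are the passage from connectedness to pure dimension, and the verification that the two dichotomies align perfectly, namely that the threshold $n = 2^q - 1$ governing the strengthened divisibility by four in \rref{th:additive_Chern} is the very same threshold $p^q - 1$ (with $p = 2$) governing membership in $p^2\Zz$ in \dref{lemm:indec_Ld_L2}{lemm:indec_Ld_L2:3}.
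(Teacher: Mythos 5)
Your proposal is correct and coincides with the paper's own derivation: the paper proves \rref{th:indec} exactly by combining \rref{th:additive_Chern} with the decomposability criterion \dref{lemm:indec_Ld_L2}{lemm:indec_Ld_L2:3} for $p=2$, the two divisibility dichotomies matching as you verified. Your preliminary reduction (connected smooth implies irreducible, hence pure-dimensional) is a correct and slightly more explicit account of a step the paper leaves implicit.
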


\end{document}